\theoremstyle{plain}
\newtheorem{theorem}{Theorem}[section]
\newtheorem{corollary}[theorem]{Corollary}
\newtheorem{lemma}[theorem]{Lemma}
\newtheorem{proposition}[theorem]{Proposition}
\newtheorem{conjecture}[theorem]{Conjecture}
\newtheorem{definition}[theorem]{Definition}
\theoremstyle{remark}
\newtheorem{remark}[theorem]{Remark}
\def\E{\mathcal E} % for cert period ring of Fontaine
\def\makeop#1{\expandafter\def\csname#1\endcsname
  {\mathop{\rm #1}\nolimits}\ignorespaces}
\newcommand{\Z}{\mathbb Z}
\newcommand{\Q}{\mathbb Q}
\newcommand{\C}{\mathbb C}
\newcommand{\N}{\mathbb N}
\renewcommand{\O}{\mathcal O} % for sheaves
\newcommand{\Ainf}{A_{\mathrm{inf}}} % for Ainf
\newcommand{\RepZp}{\mathrm{Rep}_{\Z_p}}
\newcommand{\tbfA}{\tilde{\mathbf{A}}}
\DeclareSymbolFontAlphabet{\mathbb}{AMSb} %to ensure that the meaning of \mathbb does not change
\DeclareSymbolFontAlphabet{\mathbbl}{bbold}
\newcommand{\Prism}{{\mathlarger{\mathbbl{\Delta}}}}
\newcommand{\colim@}[2]{%
  \vtop{\m@th\ialign{##\cr
    \hfil$#1\operator@font colim$\hfil\cr
    \noalign{\nointerlineskip\kern1.5\ex@}#2\cr
    \noalign{\nointerlineskip\kern-\ex@}\cr}}%
}
\newcommand{\colim}{%
  \mathop{\mathpalette\colim@{\rightarrowfill@\textstyle}}\nmlimits@
}
\newcommand*{\da@rightarrow}{\mathchar"0\hexnumber@\symAMSa 4B }
\newcommand*{\da@leftarrow}{\mathchar"0\hexnumber@\symAMSa 4C }
\newcommand*{\xdashrightarrow}[2][]{%
  \mathrel{%
    \mathpalette{\da@xarrow{#1}{#2}{}\da@rightarrow{\,}{}}{}%
  }%
}
\newcommand{\xdashleftarrow}[2][]{%
  \mathrel{%
    \mathpalette{\da@xarrow{#1}{#2}\da@leftarrow{}{}{\,}}{}%
  }%
}
\newcommand*{\da@xarrow}[7]{%
  % #1: below
  % #2: above
  % #3: arrow left
  % #4: arrow right
  % #5: space left 
  % #6: space right
  % #7: math style 
  \sbox0{$\ifx#7\scriptstyle\scriptscriptstyle\else\scriptstyle\fi#5#1#6\m@th$}%
  \sbox2{$\ifx#7\scriptstyle\scriptscriptstyle\else\scriptstyle\fi#5#2#6\m@th$}%
  \sbox4{$#7\dabar@\m@th$}%
  \dimen@=\wd0 %
  \ifdim\wd2 >\dimen@
    \dimen@=\wd2 %    
26

  \fi
  \count@=2 %
  \def\da@bars{\dabar@\dabar@}%
  \@whiledim\count@\wd4<\dimen@\do{%
    \advance\count@\@ne
    \expandafter\def\expandafter\da@bars\expandafter{%
      \da@bars
      \dabar@ 
    }%
  }%  
  \mathrel{#3}%
  \mathrel{%    
26

    \mathop{\da@bars}\limits
    \ifx\\#1\\%
    \else
      _{\copy0}%
    \fi
    \ifx\\#2\\%
    \else
      ^{\copy2}%
    \fi
  }%   
  \mathrel{#4}%
}
\begin{document}
\author{Heng Du}
\address[Heng Du]{Yau Mathematical Sciences Center, Tsinghua University, Beijing 100084, China}
\email{hengdu@mail.tsinghua.edu.cn}

\author{Tong Liu}
\address[Tong Liu]{Department of Mathematics, Purdue University, 150 N. University Street, West Lafayette, Indiana 47907, USA}
\email{tongliu@math.purdue.edu}

\title[A new method for overconvergence of $(\varphi, \Gamma)$-modules]{A new method for overconvergence of $(\varphi, \Gamma)$-modules}
\maketitle

\begin{abstract}
We show all Laurent $F$-crystals over $p$-adic fields are overconvergent. 
\end{abstract}
\tableofcontents

\section{Introduction}
One aim in the study of $p$-adic Hodge theory is to relate good categories of $p$-adic Galois representations of $\mathrm{Gal}_{\Q_p}$ with certain categories of semilinear algebraic objects over some big rings. Since it is usually easier to work with those semilinear algebraic objects when studying Galois representations in general coefficients, and when constructing or deforming Galois representations. When we want to consider all $p$-adic Galois representations of $\mathrm{Gal}_{\Q_p}$ over $\Z_p$-lattices, then the semilinear algebra category one should use is the category of \'etale $(\varphi,\Gamma)$-modules over $\mathbf{A_{\Q_p}}$. Here the latter category is a certain linear algebra category over the ring 
$$
\mathbf{A}_{\Q_p}\coloneqq\{f=\sum_{i\in\Z}a_iX^i\mid a_i\in \Z_p \text{ and }\lim_{i\to -\infty}a_i=0\}
$$
equipped with a semilinear action $\varphi$ and $\Z_p^\times$. When $K$ is a $p$-adic field, here in this paper we will refer to the case that $K$ being a finite extension of $\Q_p$, the above equivalence between $p$-adic Galois representations and \'etale $(\varphi,\Gamma)$-modules are generalized using the field of norms construction of Fontaine-Wintenberger\cite{FontaineWintenberger-fieldofnorms}. It turns out that \'etale $(\varphi,\Gamma)$-modules over $\mathbf{A}_{\Q_p}$ always descent to so-called overconvergent \'etale $(\varphi,\Gamma)$-modules and this was first established by Cherbonnier-Colmez in \cite{Cherbonnier-Colmez}. In the case when $K=\Q_p$, their result implies every \'etale $(\varphi,\Gamma)$-modules over $\mathbf{A}_{\Q_p}$ descent to the Robba ring $\mathbf{A}_{\Q_p}^\dagger$, the subring consisting of functions $f$ that converges on some annulus $(r, 1)$ with $r$ sufficiently closed to $1$.

There are two paralleled directions of generalizing the theory of \'etale $(\varphi,\Gamma)$-modules and the corresponding overconvergent theories, to a family version or to a relative version. To be more precise, the family version usually refers to Galois representations in general coefficients, cf. \cite{Dee}, \cite{BergerColmez-Families}, and recently \cite{Porat-Overconvergent}. On the other hand, the relative version refers to replacing Galois groups with \'etale fundamental groups of some geometric objects over the base $p$-adic field, cf. \cite{AB-overconvergence} and \cite{KedlayaLiu-relativeII}. Other than the works we mentioned, people also ask similar questions to $(\varphi,\tau)$-modules developed in \cite{Caruso-phitau}, and the overconvergence of $(\varphi,\tau)$-modules were obtained in \cite{Gao-Liu-Loosecrystallinelifts} and \cite{Gao-Poyeton}. Moreover, there is the theory about overconvergence for Lubin-Tate tower which is considered by Berger in \cite{Berger-multilocanalytic}.

In \cite{KedlayaLiu-relativeII}, Kedlaya-Liu tried to give an axiomatic approach to the theory of overconvergence of $(\varphi,\Gamma)$-modules, and their idea is that overconvergence is closely related to a property of a perfectoid finite \'etale tower which they called decompleteness. Using such an idea, they show the decompleteness of the cyclotomic tower, which recovers the result of Cherbonnier-Colmez. Their formalism also succeeds in dealing with relative situations that show relative toric towers are decompleting, and this result plays an important role in the construction of the Riemann-Hilbert functor in \cite{Zhu-Liu-RH}.  Kedlaya-Liu expects their formalism to apply to the Kummer tower as well, which will reprove the overconvergence of $(\varphi,\tau)$-modules, cf. Conjecture 6.2.6. in $loc.$ $cit$. However, we will show that the overconvergent period rings defined by Kedlaya-Liu in Kummer's case are too small and very few $(\varphi,\tau)$-modules are overconvergent in their sense, cf. the Appendix~\ref{app}. 

This paper gives a sheaf theoretic approach to the overconvergence of $(\varphi, \Gamma)$-modules. We will use the theory of prisms developed by Bhatt-Scholze in \cite{BS19} and \cite{BS2021Fcrystals}. Let $K$ be a finite extension of $\Q_p$, and let $X$ be a smooth formal scheme over $\O_K$. Consider the absolute prismatic site $X_\Prism$ over $X$, which equipped with the structure sheaf $\O_\Prism$ and ideal sheaf $\mathcal{I}_\Prism$, also define $\O_{\E,\Prism}=\O_\Prism[1/\mathcal{I}_\Prism]^\wedge_p$. In \cite{BS2021Fcrystals}, they define the category of Laurent $F$-crystals consisting of crystals in finite locally free $\O_{\E,\Prism}$-modules together with a Frobenius semilinear endomorphism which is \'etale, i.e., the linearization of the Frobenius semilinear endomorphism is an isomorphism, then they can show

\begin{theorem}[{\cite[Corollary 3.8]{BS2021Fcrystals}}, \cite{wu2021galois}]
The category of Laurent $F$-crystals over $X$ is equivalent to the category of $\Z_p$-local systems on the generic fiber of $X$.
\end{theorem}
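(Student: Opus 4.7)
The plan is to factor the equivalence through the perfect subsite $X_\Prism^{\perf}$, where the statement reduces to a perfectoid generalization of Fontaine's classical theorem on étale $\varphi$-modules. Concretely, I would construct the functor from Laurent $F$-crystals to $\Z_p$-local systems by evaluating a crystal $\E$ on perfect prisms $(A,I)$ lying over $X$. For such a perfect prism $A = W(R^\flat)$ with $R = A/I$ perfectoid, so $\E(A,I)$ is a finite projective étale $\varphi$-module over $A[1/I]_p^\wedge$; the functor
\[
M \;\longmapsto\; \bigl(M \otimes_{A[1/I]_p^\wedge} W(\ol{R}^\flat)[1/[\pi^\flat]]_p^\wedge\bigr)^{\varphi = 1}
\]
yields a $\Z_p$-local system on $\Spa(R,R^+)$. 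Since the generic fiber $X^\eta$ admits a pro-\'etale cover by affinoid perfectoids, each corresponding to a perfect prism over $X$, these local data glue into a global $\Z_p$-local system on $X^\eta$.

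The core technical input is the claim that the restriction functor from Laurent $F$-crystals on $X_\Prism$ to those on the perfect subsite $X_\Prism^{\perf}$ is an equivalence. Full faithfulness follows from the crystal property combined with the fact that the perfection $(A_\perf, IA_\perf)$ of any prism $(A,I)$ provides a cover in the absolute prismatic site. For essential surjectivity, I would reconstruct the value of $\E$ on a general prism $(A,I)$ from its value on the perfection by a descent argument: the map $A[1/I]_p^\wedge \to A_\perf[1/I]_p^\wedge$ is faithfully flat after $p$-completion, and the Frobenius-equivariant descent datum inherent in the crystal structure on the perfect side cuts out a unique \'etale $\varphi$-module over $A[1/I]_p^\wedge$. \'Etaleness of $\varphi$ is preserved under this descent, producing a genuine Laurent $F$-crystal on all of $X_\Prism$.

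The local equivalences assemble into a global one because both categories satisfy compatible descent: Laurent $F$-crystals descend in the prismatic topology by construction, while $\Z_p$-local systems descend along pro-\'etale perfectoid covers of $X^\eta$, and the two descent patterns are identified through the tilting correspondence. The main obstacle I anticipate lies in the descent step for \'etale $\varphi$-modules along the perfection map $A \to A_\perf$: verifying that $p$-completed faithful flatness behaves well in this setting, that the crystal condition on the full prismatic site matches precisely the Frobenius descent data recovered from the perfect subsite, and that the non-finite-type perfections of prisms (together with their derived $p$-completions) do not introduce pathologies beyond what the finite projective \'etale $\varphi$-module hypothesis controls.
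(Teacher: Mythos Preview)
The paper does not give its own proof of this theorem: it is stated as a citation of Bhatt--Scholze \cite[Corollary~3.8]{BS2021Fcrystals} and Wu \cite{wu2021galois}, with no accompanying argument. So there is nothing in the paper to compare your proposal against directly.

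That said, your outline is close in spirit to the cited proofs. Bhatt--Scholze also factor through the perfect subsite and reduce to a perfectoid statement about \'etale $\varphi$-modules; Wu's approach likewise works with perfect prisms and the tilting equivalence. One point where your sketch diverges from the actual mechanism: the equivalence between Laurent $F$-crystals on $X_\Prism$ and on $X_\Prism^{\perf}$ is not really obtained via $p$-completed faithfully flat descent along $A \to A_{\perf}$ as you suggest. Rather, the argument uses the fact that the category of \'etale $\varphi$-modules over a $p$-complete ring $R$ with Frobenius lift depends only on $(R/p,\varphi)$, and that $A[1/I]/p \to A_{\perf}[1/I]/p$ is an isomorphism (since inverting $I$ makes the Frobenius on $A/p$ bijective). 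Your proposed descent-via-flatness route would run into exactly the obstacles you flag---the map $A \to A_{\perf}$ is typically not flat in any useful sense---so the ``reduction mod $p$ plus Frobenius bijectivity'' trick is what makes the argument go through. With that correction, your plan matches the structure of the cited proofs.
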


In this paper, we will construct a sheaf $\O_{\mathcal{E},\Prism}^{\dagger}$ over $X_{\Prism^\circ}$ which is the subcategory of $X_{\Prism}$ consisting \textit{transversal} prisms over $X$. Then we define the category of \textit{overconvergent} Laurent $F$-crystals to be crystals in finite locally free $\O_{\mathcal{E},\Prism}^{\dagger}$-modules together with a Frobenius semilinear endomorphism which is \'etale in \S\ref{sec:overconvergentsheaf}. 

\begin{theorem}[{Theorem~\ref{thm:mainwithproof}}]\label{thm:mainintro}
Let $K$ be a finite extension of $\Q_p$ with ring of integers $\O_K$, and let $X=\Spf(\O_K)$, we have all Laurent $F$-crystals over $X$ are overconvergent.
\end{theorem}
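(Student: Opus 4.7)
The plan is to reduce Theorem~\ref{thm:mainintro} to a concrete claim about the evaluation of a Laurent $F$-crystal on the Breuil-Kisin prism and then to produce an overconvergent lattice by Frobenius iteration. Concretely, since $X = \Spf(\O_K)$, the Breuil-Kisin prism $(\SSS, E(u))$ with $\SSS = W(k)[[u]]$ and $E(u)$ the Eisenstein polynomial of a fixed uniformizer $\pi \in \O_K$ should be weakly initial in $X_{\Prism^\circ}$: every transversal prism over $\O_K$ receives a morphism from it after passing to a suitable cover, so values on $(\SSS, E(u))$ determine the whole crystal. Evaluating a Laurent $F$-crystal $\mathcal{M}$ on $(\SSS, E(u))$ yields a finite projective $\SSS[1/E]^\wedge_p$-module $M$ equipped with an \'etale Frobenius $\varphi$; via the Bhatt-Scholze equivalence this is the \'etale $(\varphi,\tau)$-module attached to the corresponding $\Z_p$-representation of $\mathrm{Gal}_K$, with the $\tau$-action recovered from the crystal transitions on self-products of $(\SSS, E(u))$. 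Producing a canonical $\varphi$-stable sub-lattice $M^\dagger \subset M$ over $\O_{\E,\Prism}^\dagger(\SSS, E(u))$ is therefore the prismatic counterpart of the overconvergence question for $(\varphi,\tau)$-modules treated in \cite{Gao-Liu-Loosecrystallinelifts} and \cite{Gao-Poyeton}.

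The technical heart would be the construction of $M^\dagger$. I would pick any $\varphi$-stable finitely generated $\SSS[1/E]^\wedge_p$-lattice $M_0 \subset M$ and consider the intersection of its successive Frobenius pullbacks inside $M$; the \'etaleness of $\varphi$ should force the Gauss norms on annuli approaching the boundary of the open unit disc to improve geometrically, so that the intersection lies in the overconvergent subring. This is the prismatic incarnation of the Cherbonnier-Colmez Frobenius regularization argument, and the essential input is the \'etaleness axiom built into the definition of a Laurent $F$-crystal.

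The main obstacle, and where the sheaf-theoretic setup of Section~\ref{sec:overconvergentsheaf} would be decisive, is to check that this pointwise construction actually glues to a crystal in $\O_{\E,\Prism}^\dagger$-modules on all of $X_{\Prism^\circ}$: the Frobenius-iteration lattice must be preserved by the transition maps to self-products of $(\SSS, E(u))$, which encode the descent datum recovering the Galois representation. I would attack this via a flat-descent statement for the sheaf $\O_{\E,\Prism}^\dagger$ on the transversal site, combined with the observation that descent data coming from a crystal structure are automatically $\varphi$-equivariant, so the canonical overconvergent lattice produced on $(\SSS, E(u))$ must be preserved by them and consequently upgrades to a global object.
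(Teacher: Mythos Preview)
Your reduction to the Breuil--Kisin prism and the identification of the crystal data with an \'etale $\varphi$-module over $\O_\E$ together with a $\varphi$-equivariant descent isomorphism over $\O_{\E,\Prism}(\SSS^1,E)$ is correct and matches the paper exactly. Producing an overconvergent lattice $M^\dagger \subset M$ for the underlying $\varphi$-module by Frobenius iteration is also fine; this is essentially the classical statement that \'etale $\varphi$-modules over $\O_\E$ are overconvergent.

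The genuine gap is in your last paragraph. You claim that because the descent isomorphism $f\colon p_1^\ast M \to p_2^\ast M$ is $\varphi$-equivariant and $M^\dagger$ is canonical, $f$ must restrict to an isomorphism $p_1^\ast M^\dagger \to p_2^\ast M^\dagger$ over $\O_{\E,\Prism}^\dagger(\SSS^1,E)$. But this inference requires that base change
\[
\mathrm{Vect}\bigl(\O_{\E,\Prism}^\dagger(\SSS^1,E)\bigr)^{\varphi=1} \longrightarrow \mathrm{Vect}\bigl(\O_{\E,\Prism}(\SSS^1,E)\bigr)^{\varphi=1}
\]
be \emph{fully faithful}, not merely faithful. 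The paper proves only faithfulness for $k=1,2$ (via injectivity of the maps $c_k$), and invokes full faithfulness only for $k=0$ where the ring is the familiar $\O_\E$. Over the much larger self-product ring $\SSS^1$ no such full faithfulness is established, and your argument gives no mechanism for it; asserting it is essentially assuming the conclusion. The Appendix of the paper is a cautionary tale here: it shows that the Kedlaya--Liu axiomatic decompleting framework, which is precisely the Cherbonnier--Colmez-style approach you are invoking, \emph{fails} for the Kummer tower.

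The paper's actual route to essential surjectivity is entirely different and does not pass through any abstract $\varphi$-equivariance or Frobenius-iteration argument on $\SSS^1$. It translates the descent isomorphism into the $\tau$-action of $(\varphi,\tau)$-module theory and then directly bounds the convergence radius of the matrix of $\tau$ using the loose crystalline lifting technique of Gao--Liu: one approximates $\mathcal{M}/p^n$ by quotients of Kisin modules attached to crystalline representations, where the $\tau$-action is visibly integral. The new ingredient relative to \cite{Gao-Liu-Loosecrystallinelifts} is replacing the classical $(\varphi,\hat G)$-module theory by the prismatic version from \cite{Du-Liu-phiGhatmodules}, which removes a Frobenius twist that in the earlier work forced the $\tau$-estimate to live only in perfect overconvergent rings. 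This yields Corollary~\ref{cor:tauaction}: an explicit basis on which $\tau$ acts by a matrix with entries in $\O_{\E,\Prism}^{\dagger,n}(\SSS^1,E)$, which is exactly the overconvergent descent datum you need.
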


For the exact meaning of the above theorem, we will define a functor
\begin{equation}\label{eq:DXintro}
D_X: \mathrm{Vect}(X_{\Prism^\circ},\O_{\mathcal{E},\Prism}^{\dagger})^{\varphi=1} \to \mathrm{Vect}(X_{\Prism},\O_{\mathcal{E},\Prism})^{\varphi=1}.
\end{equation}
in \S\ref{sec:overconvergentsheaf}, where $\mathrm{Vect}(X_{\Prism^\circ},\O_{\mathcal{E},\Prism}^{\dagger})^{\varphi=1} $ and $ \mathrm{Vect}(X_{\Prism},\O_{\mathcal{E},\Prism})^{\varphi=1}$ are the category of overconvergent Laurent $F$-crystals and Laurent $F$-crystals respectively. And Theorem~\ref{thm:mainintro} states that $D_X$ is an equivalence of categories when $X$ is the formal spectrum of the ring of integers of a $p$-adic field. So in particular, by the result of Bhatt-Scholze, $\mathrm{Vect}(X_{\Prism^\circ},\O_{\mathcal{E},\Prism}^{\dagger})^{\varphi=1}$ is also equivalent to the category of $G_K$-stable lattice in $p$-adic representations of $G_K$.

We expect the above theorem to give a uniform approach to all overconvergent theories over $p$-adic fields. In particular, it recovers all the main results in \cite{Cherbonnier-Colmez}, \cite{Gao-Liu-Loosecrystallinelifts} and \cite{Gao-Poyeton} in the case of $p$-adic fields. Here we want to mention the above results can also be treated uniformly using the framework of Berger-Colmez on locally analytic vectors \cite{BergerColmez-Sen} and \cite{Berger-multilocanalytic}, by showing there are ``enough" locally analytic vectors in certain perfect overconvergent period rings. When the tower is not a $p$-adic Lie extension, Berger-Colmez's method doesn't apply, and there is no general framework for proving overconvergent results. Meanwhile, Theorem~\ref{thm:mainintro} can produce many results in the non-Lie extension direction. Theorem~\ref{thm:mainintro} In \S\ref{sec:phiiterate}, we will formulate and prove the ``decompleting'' of any $\Phi$-iterate tower over $K$. Moreover, we can show for a fixed $\Z_p$-Galois representation $T$, the convergent radii are uniformly bounded for all choices of $\Phi$-iterate towers.

\medskip
\noindent
\textbf{Why using the prismatic site.} Prisms are considered as the deperfection of perfectoid rings. We know the overconvergent theory is relatively easy over perfect period rings. Actually, the underlying \'etale $\varphi$-modules already give an equivalence of categories in many cases, cf. \cite{KedlayaLiu-relativeII}. And we can show \eqref{eq:DXintro} fits into the following diagram:
$$
\begin{tikzcd}
\mathrm{Vect}(X_{\Prism^\circ},\O_{\mathcal{E},\Prism}^{\dagger})^{\varphi=1} \arrow[r,"D_X"]\arrow[d] & \mathrm{Vect}(X_{\Prism},\O_{\mathcal{E},\Prism})^{\varphi=1} \arrow[d]\\
\mathrm{Vect}(X_{\Prism^\circ}^{\perf},\O_{\mathcal{E},\Prism}^{\dagger})^{\varphi=1} \arrow[r,"D_X^{\perf}"]& \mathrm{Vect}(X_\Prism^{\perf},\O_{\mathcal{E},\Prism})^{\varphi=1},
\end{tikzcd}
$$
where the vertical arrows are induced by the restrictions to perfect sites. And $D_X^{\perf}$ is the corresponding functor over perfect sites. We will see $D_X^{\perf}$ recovers the overconvergent theory over perfect period rings in \S\ref{sec:perfectdecomp}. And our theory is indeed a deperfection of this result. 

\medskip
\noindent
\textbf{Idea of proof.} We will define the overconvergent period sheaves $\O_{\mathcal{E},\Prism}^{\dagger,n}$ and $\O_{\mathcal{E},\Prism}^{\dagger}$ over $X_{\Prism^\circ}$ in \S\ref{sec:overconvergentsheaf}, to understand sections of these sheaves, we developed a ``Noetherian approximation" method in \S\ref{sec:BKprism}. Then we can define the category of overconvergent Laurent $F$-crystals and we can show that overconvergent Laurent $F$-crystals can be understood using overconvergent \'etale $\varphi$-modules with ``overconvergent" descent isomorphisms in \S\ref{sec:overconvergentsheaf}. Given a $\Z_p$-Galois representation, we will get an overconvergent \'etale $\varphi$-module from the classical overconvergent results, then the question is how to find the overconvergent descent isomorphism. We will use the idea in \cite{Du-Liu-phiGhatmodules}. Briefly speaking, our slogan is that the descent isomorphism should come from the $\tau$-action, where $\tau$ is a specific Galois group element that comes from the $(\varphi,\tau)$-module theory of Caruso in \cite{Caruso-phitau}. To apply our strategy, we will also use some computations made in \cite{Gao-Liu-Loosecrystallinelifts}, where they can estimate the overconvergent radius of $\tau$ using (loose) crystalline lifting tricks together with the theory of $(\varphi,\hat{G})$-modules. However, in \cite{Gao-Liu-Loosecrystallinelifts} they can only get the overconvergent $\tau$-action in some perfect overconvergent period rings, the main obstruction of to ``deperfect" the $\tau$-action is because in $(\varphi,\hat{G})$-modules, one can only see the $\tau$-action after a Frobenius twist. But we will see if we replace the theory of $(\varphi,\hat{G})$-module with the theory of prismatic $(\varphi,\hat{G})$-modules developed in \cite{Du-Liu-phiGhatmodules}, this problem disappears and $\tau$-action provides the descent isomorphism. 

\medskip
\noindent
\textbf{Notations and conventions.} In this paper, $K$ is $p$-adic field, i.e., a finite extension of $\Q_p$. We let $\O_K$ be the ring of integers of $K$ which contains a fixed uniformizer $\varpi$, and $k$ the residue field of $K$. We will write $W=W(k)$, $K_0=W[1/p]$. We fix an algebraic closure $\overline{K}$ over $K$ and write $C$ as the $p$-adic completion of $\overline{K}$. The ring of integers in $C$ will be denoted by $\O_C$. Let $C^\flat$ (resp. $\O_C^\flat$) to be the tilt of $C$ (resp. $\O_C$). Define $\Ainf=\Ainf(\O_C)$. We will fix a compatible system $\{\varpi_n\}_{n\geq 0}$ of $p^n$-th roots with $\varpi_0=\varpi$ as well as a compatible system $\{\zeta_n\}_{n\geq 0}$ of $p^n$-th roots of $1$. Write $\underline{\varpi}^\flat  \coloneqq (\varpi_n)_{n\geq 0}$ and $\underline\zeta^\flat \coloneqq (\zeta_n)_{n\geq 0}$ as elements in $\O_{C}^\flat$. Once the above is given, we can define a map $W[\![u]\!] \to \Ainf$ given by $u \to [\underline{\varpi}^\flat]$.

Let $K _\infty =\bigcup_{n = 1}^\infty K (\varpi _n )$. Then we have a closed subgroup $G_\infty\coloneqq {\rm Gal } (\overline  K / K_\infty)$ of $G_K\coloneqq {\rm Gal } (\overline K  / K)$. Let $L = \cup_{n=1}^\infty K_{\infty} (\zeta_{p ^n})$ and $K_{1^\infty}= \bigcup_{n =1}^\infty K (\zeta_{p ^n})$, then $H_K \coloneqq \Gal (L / K _\infty)$ is a closed subgroup of $\hat G \coloneqq \Gal(L /K) $. Let $\tau$ be a topological generator of $\Gal (L/ K_{1 ^\infty}) $. We will also use $\tau$ to denote a lifting of $\tau$ in $G_K$ when it is acting on an element fixed by $\Gal(\overline{K}/L)$. 

In this paper, for a prism $(A,I)$, if $I$ is generated by $d$, then we will write $(A,d)$ instead of $(A,(d))$ to simplify our symbols. 

\medskip
\noindent
\textbf{Acknowledgments.} It is our pleasure to thank Hansheng Diao, Hui Gao, Yu Min, Yong Suk Moon, Koji Shimizu, Shanwen Wang, and Yupeng Wang for their valuable discussions and interest during the preparation of this paper. The appendix of this paper is based on an email correspondence between the first-named author and Kiran Kedlaya and Ruochuan Liu, H.D. wants to thank them for the discussions on \cite{KedlayaLiu-relativeII}. 

\section{Overconvergent Laurent $F$-crystals}\label{sec:overconvergentsheaf}

Let $X$ be a bounded $p$-adic formal scheme, and let $X_{\Prism}$ be the absolute prismatic site over $X$. We will denote $\O_{\mathcal{E},\Prism}$ to be the sheaf $\O_\Prism[\mathcal{I}_\Prism^{-1}]^\wedge_p$, and we use $\mathrm{Vect}(X_{\Prism},\O_{\E,\Prism})^{\varphi=1}$ to denote the category of Laurent $F$-crystals over $X$.

Let $X_{\Prism^\circ}$ be the subcategory of $X_{\Prism}$ consisting only \textit{transversal} prisms over $X$. Recall that a prism is called \textit{transversal} if $A/I$ is $p$-torsion free. We recall some basic properties of transversal prisms.

\begin{proposition}\label{prop:transversal}
    \begin{enumerate}
        \item If $f\colon(A, I) \to (B, J)$ is a flat morphism of prisms. If $(A, I)$ is
transversal, then so is $(B, J)$. The converse holds if $f$ is a covering;
        \item If $(A,d)$ is a transversal prism, then for all $s\neq r>0$ in $\N$, the sequences $(p,\varphi^s(d))$ and $(\varphi^r(d),\varphi^s(d))$ are regular. 
    \end{enumerate}
\end{proposition}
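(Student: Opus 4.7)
For part (1), the plan is to use the rigidity of prism morphisms, which forces $J=IB$, together with the flatness hypothesis on $f$. Writing $B/J=B\otimes_A A/I$, $(p,I)$-complete flatness of $f$ shows that $B/J$ is flat over $A/I$; since flat base change preserves the property that $p$ is a nonzerodivisor, transversality of $(A,I)$ transfers to $(B,J)$. For the converse, when $f$ is a covering, i.e., $(p,I)$-completely faithfully flat, the same identification together with faithfully flat descent of injectivity of the multiplication-by-$p$ map reverses the implication.

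For part (2), the starting point is to upgrade transversality of $(A,d)$ into basic nonzerodivisor statements. First, $A$ itself is $p$-torsion free: if $pa=0$ then $a\bmod d$ vanishes by transversality, so $a=db$ and $pb=0$ (using that $d$ is a nonzerodivisor by invertibility of $I$); iterating gives $a\in\bigcap_n (d)^n=0$ by $I$-adic separatedness. Second, a short diagram chase (from $da\in pA$, passing to $A/d$ forces $a\in pA$) shows $d$ is a nonzerodivisor on $A/p$. The main computational input is the $\delta$-ring identity $\varphi(x)=x^p+p\delta(x)$, which yields two reductions: modulo $p$, $\varphi^n(d)\equiv d^{p^n}$ for all $n\geq 0$; and for any distinguished $y$ (meaning $\delta(y)$ is a unit), $\varphi(y)\equiv p\delta(y)\pmod y$, and iterating carefully gives $\varphi^n(y)\equiv p\cdot u_n\pmod y$ where $u_n\equiv \delta(y)^{p^{n-1}}\pmod p$ is a unit modulo $p$. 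Applied to $y=\varphi^r(d)$, whose $\delta$-value $\varphi^r(\delta(d))$ is a unit because $\varphi$ preserves units, this supplies the needed congruence.

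With these preparations, regularity of $(p,\varphi^s(d))$ is immediate: $p$ is a nonzerodivisor on $A$, and $\varphi^s(d)\equiv d^{p^s}\pmod p$ is a nonzerodivisor on $A/p$ because $d$ is. For $(\varphi^r(d),\varphi^s(d))$ with $r<s$, I would first verify that $\varphi^r(d)$ is a nonzerodivisor on $A$ and that $A/\varphi^r(d)$ is $p$-torsion free; both use the reduction $\varphi^r(d)\equiv d^{p^r}\pmod p$ together with the lifting principle ``nonzerodivisor modulo $p$ plus $p$-adic separatedness implies nonzerodivisor.'' Setting $y=\varphi^r(d)$, the $\delta$-ring computation gives $\varphi^s(d)=\varphi^{s-r}(y)\equiv p\cdot u\pmod y$ with $u$ a unit in $A/y$, the lift of a mod-$p$ unit obtained using $p$-adic completeness of $A/\varphi^r(d)$. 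Since $p$ is a nonzerodivisor on $A/\varphi^r(d)$, so is $\varphi^s(d)$, completing regularity of $(\varphi^r(d),\varphi^s(d))$.

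The main obstacle to overcome is the non-Noetherian nature of prisms: regular sequences do not automatically permute, and one cannot appeal to associated primes. The workhorse throughout is the lifting principle ``nonzerodivisor modulo $p$ plus $p$-adic separatedness implies nonzerodivisor in $A$,'' which reduces all such checks to computations modulo $p$. A secondary technical point is $p$-adic completeness of $A/\varphi^r(d)$, needed so that mod-$p$ units lift to units; this follows from classical $p$-completeness of $A$, which holds for all bounded prisms and in particular for transversal ones.
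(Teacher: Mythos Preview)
Your argument is correct. The paper does not give a proof at all: it simply cites \cite[Remark~2.4.4]{Bhatt-Lurie-Absoluteprismaticcohomology} for part~(1) and \cite[Lemma~3.3]{ALB-cyclotomictrace} for part~(2). What you have written is essentially a self-contained unpacking of those references, and the ideas (rigidity $J=IB$ plus flat base change for part~(1); the $\delta$-ring identity $\varphi(y)=y^p+p\delta(y)$ together with $p$-torsion-freeness and $p$-completeness of $A/\varphi^r(d)$ for part~(2)) are exactly the ones used there.

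One small gap worth closing: in part~(2) you treat $(\varphi^r(d),\varphi^s(d))$ only under the assumption $r<s$, but the statement is for all $r\neq s$. The case $r>s$ follows formally once you have the other case and the fact (which you established) that every $\varphi^k(d)$ is a nonzerodivisor on $A$. Indeed, if $(x,y)$ is a regular sequence and $y$ is a nonzerodivisor on $A$, then $(y,x)$ is also regular: given $x a = y b$, reduce modulo $x$ to obtain $\bar b=0$ in $A/x$ (since $y$ is a nonzerodivisor there), write $b=xc$, and cancel the nonzerodivisor $x$ to get $a=yc$. Applying this with $x=\varphi^s(d)$ and $y=\varphi^r(d)$ (so $s<r$) handles the remaining order.
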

\begin{proof}
(1) is \cite[Remark 2.4.4]{Bhatt-Lurie-Absoluteprismaticcohomology} and (2) is \cite[Lemma 3.3]{ALB-cyclotomictrace}
\end{proof}

\begin{lemma}\label{lem:transversalsite}
$X_{\Prism^\circ}$ is a site.   
\end{lemma}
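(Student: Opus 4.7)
The plan is to equip $X_{\Prism^\circ}$ with the topology induced from the flat topology on $X_{\Prism}$: a family $\{(A,I)\to (B_i,J_i)\}$ of morphisms in $X_{\Prism^\circ}$ is declared to be a cover precisely when it is a flat cover in the ambient prismatic site $X_{\Prism}$. Since $X_{\Prism^\circ}$ is by definition a full subcategory of $X_{\Prism}$, what I need to verify is that this choice of covering families satisfies the axioms of a Grothendieck (pre)topology, and in particular that the fiber products needed to witness pullback stability actually exist inside $X_{\Prism^\circ}$.

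The only nontrivial point is stability under base change. So suppose $(A,I)\to (B,J)$ is a cover in $X_{\Prism^\circ}$ and $(A,I)\to (C,IC)$ is an arbitrary morphism in $X_{\Prism^\circ}$. In $X_{\Prism}$ the pushout $(D,JD) := (B,J)\otimes_{(A,I)}(C,IC)$ is constructed in the usual way, namely by taking $D$ to be the derived $(p,I)$-completion of $B\otimes_A^L C$ with its induced $\delta$-structure and Frobenius. The induced morphism $(C,IC)\to (D,JD)$ is itself a flat cover, as flat covers are stable under pushout in $X_{\Prism}$. Because $(C,IC)$ is transversal by hypothesis, Proposition~\ref{prop:transversal}(1) then forces $(D,JD)$ to be transversal, so $(D,JD)$ lies in $X_{\Prism^\circ}$. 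Thus the fiber product of $(B,J)$ and $(C,IC)$ over $(A,I)$ exists in $X_{\Prism^\circ}$ and coincides with the one in $X_{\Prism}$, and the base change of the cover $(A,I)\to (B,J)$ along $(A,I)\to (C,IC)$ is a cover in $X_{\Prism^\circ}$.

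The remaining axioms are essentially formal. Identity maps (and more generally isomorphisms) are covers because they are so in $X_{\Prism}$. A composition of flat covers is a flat cover, so composition of covers in $X_{\Prism^\circ}$ is still a cover. Thus all site axioms hold for $X_{\Prism^\circ}$ with the induced topology. The main (and essentially only) obstacle is the closure of the transversal subcategory under the pushouts that implement base change of covers, and that is exactly the content extracted from Proposition~\ref{prop:transversal}(1).
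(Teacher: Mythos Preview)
Your proof is correct and follows essentially the same approach as the paper: the only nontrivial site axiom is stability of covers under base change, and you verify it exactly as the paper does, by observing that the pushout of a flat cover along an arbitrary map in $X_{\Prism}$ is again a flat cover and then invoking Proposition~\ref{prop:transversal}(1) to conclude the pushout remains transversal. The paper's argument is just a compressed version of yours.
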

\begin{proof}
We need to check that the pushout of a cover along an arbitrary map is again inside $X_{\Prism}^\circ$. For this, we have the pushout of a cover along an arbitrary map is again a covering in $X_{\Prism}$, by Proposition~\ref{prop:transversal}, we have the pushout is transversal. 
\end{proof}

\begin{lemma}
Let $X$ be a $p$-torsion free quasi-syntomic $p$-adic formal scheme. Then the following categories are equivalent:
$$
\mathrm{Vect}(X_{\Prism},\O_{\mathcal{E},\Prism}) \simeq \mathrm{Vect}(X_{\Prism^\circ},\O_{\mathcal{E},\Prism}) \simeq \mathrm{Vect}(X_{qsyn},\Prism_\bullet[\frac{1}{I}]^\wedge_p).
$$
\end{lemma}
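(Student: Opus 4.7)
The strategy is to identify a common basis for all three sites, namely the transversal prisms $\Prism_R$ arising from quasi-regular semiperfectoid (qrsp) covers $R$ of $X$. Because $X$ is $p$-torsion free and qsyn covers are flat, such $R$ are automatically $p$-torsion free, which makes the associated Bhatt--Scholze prism $\Prism_R$ transversal, placing it inside $X_{\Prism^\circ}$.

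For the first equivalence I would prove that $X_{\Prism^\circ}$ is a basis of $X_\Prism$: every prism $(A,I) \in X_\Prism$ admits a cover by a transversal prism. Concretely, one picks a qrsp cover $\Spf R \to \Spf(A/I)$ within $X_{qsyn}$ (using that qrsp objects form a basis of the qsyn site of $X$) and forms the prismatic envelope $(\Prism_R, I_R)$. The induced structure map $(A,I) \to (\Prism_R, I_R)$ is $(p,I)$-completely faithfully flat by construction of the envelope, hence a cover in $X_\Prism$, and $\Prism_R/I_R = R$ is $p$-torsion free, so $(\Prism_R, I_R) \in X_{\Prism^\circ}$. Proposition~\ref{prop:transversal}(1) then ensures the \v Cech nerve of such a cover remains transversal, so standard sheaf descent gives $\mathrm{Vect}(X_\Prism, \O_{\E,\Prism}) \simeq \mathrm{Vect}(X_{\Prism^\circ}, \O_{\E,\Prism})$.

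For the second equivalence I would work with the functor $R \mapsto (\Prism_R, I_R)$ from qrsp objects over $X$ to transversal prisms. A Laurent $F$-crystal on $X_{\Prism^\circ}$ evaluates on each $\Prism_R$ to a finite projective $\Prism_R[1/I]^\wedge_p$-module, and the crystal transition isomorphisms pull back to the required qrsp \v Cech-cocycle data, yielding an object of $\mathrm{Vect}(X_{qsyn}, \Prism_\bullet[1/I]^\wedge_p)$. In the reverse direction, a qsyn sheaf in such modules restricts to the qrsp basis and, by cofinality of $\{\Prism_R\}$ inside $X_{\Prism^\circ}$ (established in step one), extends uniquely to a crystal on all of $X_{\Prism^\circ}$. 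Mutual inverseness is then a routine descent check.

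The main obstacle will be the first step, namely verifying that the envelope map $(A,I) \to (\Prism_R, I_R)$ built from a qrsp cover of $A/I$ is a covering in the prismatic topology, and that $R$ is $p$-torsion free. The $p$-torsion freeness of $R$ should follow from that of $X$ combined with flatness of the qsyn cover, while the $(p,I)$-complete faithful flatness of the envelope relies on Bhatt--Scholze's construction of prismatic envelopes along qsyn morphisms. Once this basis property is secured, both equivalences become formal sheaf-theoretic bookkeeping, with Proposition~\ref{prop:transversal}(1) controlling the \v Cech nerves throughout.
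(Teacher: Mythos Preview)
Your overall strategy matches the paper's: both hinge on the fact that for a $p$-torsion-free qrsp $S$ the prism $\Prism_S$ is transversal, and both collapse the three categories to a common \v{C}ech limit. The paper is more economical, though. It first invokes \cite[Proposition~2.14]{BS2021Fcrystals} for the equivalence of the two outer categories, then picks a \emph{single} $p$-torsion-free qrsp cover $S$ of the base $R=\O_X$ (not of each $A/I$), forms its \v{C}ech nerve $S^\bullet$ in $R_{qrsp}$, notes that every $\Prism_{S^i}$ is transversal by Lemma~\ref{lem:transversalsite}, and concludes that all three categories equal $\varprojlim \mathrm{Vect}(\Prism_{S^\bullet}[1/I]^\wedge_p)$. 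No claim that $X_{\Prism^\circ}$ is a basis of $X_\Prism$ is ever needed.

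Your first step, by contrast, has a real gap. If $R$ is a qrsp cover of $A/I$ and $\Prism_R$ denotes its absolute initial prism, initiality only produces maps \emph{out of} $\Prism_R$ into prisms $(B,J)$ equipped with a structure map $R \to B/J$. But here you have $A/I \to R$, the wrong direction: $(A,I)$ is not an object of $R_\Prism$, so there is no ``induced structure map'' $(A,I)\to(\Prism_R,I_R)$. What you would actually need is a \emph{relative} prismatic envelope of $R$ over the given prism $(A,I)$, together with a proof that it exists and is $(p,I)$-completely faithfully flat over $A$; this is not automatic and does not follow from the construction of $\Prism_R$. The paper sidesteps this entirely by covering the base rather than each $A/I$, so that only coverage of the final object is required, and that is already supplied by the Bhatt--Scholze comparison with $R_{qrsp}$.
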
 
\begin{proof}
The equivalence of $\mathrm{Vect}(X_{\Prism},\O_{\mathcal{E},\Prism})$ and $\mathrm{Vect}(X_{qsyn},\Prism_\bullet[\frac{1}{I}]^\wedge_p)$ is established in \cite[Proposition 2.14]{BS2021Fcrystals}. Here, one reduces to the case $X=\Spf(R)$ with $R$ quasi-syntomic, and the equivalence is given by the natural functor 
$$
\Prism(-)\colon R_{qrsp} \to  R_{\Prism}
$$
that maps $S$ to $\Prism_S$. 

Let $S$ be a quasiregular semiperfectoid covering $R$. In particular, we have $S$ is $p$-torsion free. This will imply that $\Prism_S$ is transversal, cf. \cite[Proposition 7.10]{BS19}. Let $S^\bullet$ be the \v{C}ech nerve of this map, then we have all three categories equivalent to
$$
\lim \mathrm{Vect}(\Prism_{S^\bullet})
$$
here we use the fact that $\Prism_{S^i}$ are transversal for all $i$ by Lemma~\ref{lem:transversalsite}.
\end{proof}

For any transversal prism $(A, I)$ and $n\in \N$ greater than $1$, let $I_n\coloneqq \prod_{i=1}^{n}\varphi^{i}(I)$. 

\begin{lemma}[{\cite[Lemma 3.3]{ALB-cyclotomic-trace}}]
$I_n$ defines a Cartier divisor on $\Spec(A)$.
\end{lemma}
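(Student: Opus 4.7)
The plan is to reduce to showing that, locally on $\Spec(A)$, the ideal $I_n$ is principal and generated by a non-zero-divisor, which is precisely the condition for $V(I_n)$ to be an effective Cartier divisor. Since $(A,I)$ is a prism, $I$ is already an invertible ideal of $A$, so after passing to a Zariski cover of $\Spec(A)$ I may assume $I=(d)$ for a single element $d$. Then each $\varphi^{i}(I)=(\varphi^{i}(d))$ is principal, and consequently
\[
I_n \;=\; \prod_{i=1}^{n}\varphi^{i}(I) \;=\; \Bigl(\prod_{i=1}^{n}\varphi^{i}(d)\Bigr).
\]
So the task reduces to verifying that $\prod_{i=1}^{n}\varphi^{i}(d)$ is a non-zero-divisor in $A$.

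For this I will invoke Proposition~\ref{prop:transversal}(2). For each index $i\in\{1,\dots,n\}$, pick any $j\geq 1$ with $j\neq i$ (for instance $j=i+1$). Proposition~\ref{prop:transversal}(2) asserts that $(\varphi^{i}(d),\varphi^{j}(d))$ is a regular sequence in $A$; in particular its leading term $\varphi^{i}(d)$ is a non-zero-divisor on $A$. Since a product of non-zero-divisors in any commutative ring is again a non-zero-divisor (if $ab\cdot c=0$ then $bc=0$ since $a$ is a non-zero-divisor, and then $c=0$ since $b$ is), the element $\prod_{i=1}^{n}\varphi^{i}(d)$ is a non-zero-divisor, and hence $I_n$ defines an effective Cartier divisor on $\Spec(A)$.

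The only non-formal input is Proposition~\ref{prop:transversal}(2); the rest is bookkeeping. The main point requiring care is to ensure one never needs regularity at the exponent $0$: the product defining $I_n$ starts at $i=1$, and for each such $i$ one can always find a distinct companion index $j\geq 1$, so the hypothesis $r\neq s>0$ in the proposition is always satisfiable. Consequently I do not anticipate any serious obstacle.
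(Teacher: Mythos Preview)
Your argument is correct. The paper does not give its own proof of this lemma; it simply cites \cite[Lemma~3.3]{ALB-cyclotomic-trace}, which is also the source for Proposition~\ref{prop:transversal}(2). What you have done is show that the Cartier-divisor statement follows formally from the regular-sequence statement in that same reference: localize to make $I=(d)$, use Proposition~\ref{prop:transversal}(2) to see each $\varphi^{i}(d)$ is a non-zero-divisor, and conclude. This is a clean unpacking of how the two parts of the cited lemma relate, and nothing is missing. One small remark: since the paper defines $I_n$ only for $n>1$, your concern about finding a companion index $j\neq i$ with $j\geq 1$ is even easier than you indicate (you can simply take $j\in\{1,\dots,n\}\setminus\{i\}$), though your choice $j=i+1$ works just as well and covers the case $n=1$ too.
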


\begin{remark}
Note that our definition of $I_n$ differs from a Frobenius twist with the one considered in \cite{ALB-cyclotomic-trace}.
\end{remark}

Define 
$$
\O_{\mathcal{E},\Prism}^{\dagger,n}(A,I)=A\langle\frac{p}{I_n}\rangle[\frac{1}{I_n}],
$$ 
where $A\langle\frac{p}{I_n}\rangle$ is the $(p,I_n)$-adic completion of $A[\frac{p}{I_n}]$. 

\begin{remark}\label{rem:adictop}
We have $\varphi(I)$ coincides with $I^p$ after modulo $p$, so for $n\in \N$, $(p,I_n)=(p,I^m)$ for some $m$, and
$$
(p,I)^m \subset (p,I^m) = (p,\prod_{i=1}^n\varphi^i(I)) \subset (p,I),
$$
that is $(p,I)$, $(p,\varphi(I))$ and $(p,I_n)$ defines the same topology on $A$. 

Over $A[\frac{p}{I_n}]$, we have $I_n$ divides $p$, so the $(p,I_n)$-adic topology is the same as the $I_n$-adic topology.
\end{remark}

\begin{lemma}\label{sheafnessofOdagger}
$\O_{\mathcal{E},\Prism}^{\dagger,n}$ defines a sheaf on $X_{\Prism^\circ}$.
\end{lemma}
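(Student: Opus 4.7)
The plan is to reduce the sheaf property of $\O_{\mathcal{E},\Prism}^{\dagger,n}$ to the known sheaf property of $\O_\Prism$ on $X_\Prism$, by expressing the defining operations as base change along Čech covers of transversal prisms.

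Fix a cover $(A,I) \to (B,IB)$ in $X_{\Prism^\circ}$ and let $B^\bullet$ be its Čech nerve in the category of prisms. By Proposition~\ref{prop:transversal}, every term $B^i$ remains a transversal prism over $(A,I)$, so the full Čech complex lives in $X_{\Prism^\circ}$. It therefore suffices to verify exactness of the augmented complex
\[
0 \to A\langle p/I_n\rangle[1/I_n] \to B\langle p/I_n\rangle[1/I_n] \to (B \widehat{\otimes}_A B)\langle p/I_n\rangle[1/I_n] \to \cdots.
\]
The analogous complex for $\O_\Prism$ (with no decorations) is exact by the sheaf property of the structure sheaf proved in \cite{BS19}. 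The remaining task is to show that the functor $C \mapsto C\langle p/I_n\rangle[1/I_n]$ preserves this exactness.

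I would decompose the functor into three steps. First, $C \mapsto C[p/I_n]$: for a transversal prism, $I_n$ is generated by $\prod_{i=1}^n \varphi^i(d)$ for any generator $d$ of $I$, which is a non-zero-divisor by Proposition~\ref{prop:transversal}(2), so $C[p/I_n]$ is well-defined as a subring of $C[1/I_n]$ and can be presented as $C[T]/(d_n T - p)$ for $d_n$ a chosen generator of $I_n$; this presentation is evidently functorial and compatible with flat base change among transversal prisms. Second, the $(p,I_n)$-adic completion yielding $C \mapsto C\langle p/I_n\rangle$: by Remark~\ref{rem:adictop} this topology coincides with the $(p,I)$-adic one used in the prismatic site, so a $(p,I)$-completely faithfully flat map of transversal prisms induces a $(p,I_n)$-completely faithfully flat map on the completed localizations. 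Third, inverting the non-zero-divisor $I_n$, which is an exact flat localization. Combining, the Čech complex for $\O_{\mathcal{E},\Prism}^{\dagger,n}$ arises from the one for $\O_\Prism$ by a sequence of exactness-preserving base changes.

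The main obstacle I anticipate is the completion step, where one must argue that $(p,I_n)$-adic completion commutes with and preserves the exactness of the (base-changed) Čech complex. Here I would exploit transversality: the modules in the Čech nerve are classically $(p,I_n)$-adically complete with bounded $p^\infty$-torsion (by Proposition~\ref{prop:transversal} and the regularity of $(p,\varphi^s(d))$), so derived and classical completions coincide, and $(p,I)$-completely faithfully flat descent in the sense of Bhatt--Scholze then translates into classical Čech exactness of the completed complex. Extending from a single cover to arbitrary covers is standard.
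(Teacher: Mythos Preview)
Your setup is right, but the three-step decomposition has real problems. First, step one is not an ``exactness-preserving base change'': the ring $A[p/I_n] \cong A[T]/(d_nT-p)$ is \emph{not} flat over $A$. From the free $A$-resolution $0 \to A[T] \xrightarrow{d_nT-p} A[T] \to A[p/I_n] \to 0$ one computes $\mathrm{Tor}_1^A\bigl(A[p/I_n], A/(p,d_n)\bigr) \cong (A/(p,d_n))[T] \neq 0$. Nor can you fall back on universal exactness of the classical Amitsur complex to bypass flatness, because the prismatic \v{C}ech nerve uses $(p,I)$-\emph{completed} tensor products, so the usual $B$-linear contracting homotopy is not available before completion. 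Your phrase ``compatible with flat base change among transversal prisms'' only says that $A' \otimes_A A[p/I_n] = A'[p/I_n]$ for flat $A \to A'$; it says nothing about exactness of the \v{C}ech complex. Second, the assertion hidden in your step two---that $A\langle p/I_n\rangle \to B\langle p/I_n\rangle$ is $(p,I_n)$-completely faithfully flat---is true but does not follow just from the topologies agreeing; it needs an honest argument. Third, you never verify that the construction commutes with self-products, i.e.\ that $(B\hat\otimes_A B)\langle p/I_n\rangle \simeq B\langle p/I_n\rangle \hat\otimes_{A\langle p/I_n\rangle} B\langle p/I_n\rangle$, without which the complex you produce cannot be identified as a \v{C}ech nerve at all.

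The paper does not attempt to transport exactness from the $\O_\Prism$-\v{C}ech complex through the functor. It proves directly that $(A,I)\mapsto A\langle p/I_n\rangle$ is a sheaf by establishing (Lemma~\ref{lem:A<>toB<>iscompletefaithfullyflat}) that $A\langle p/I_n\rangle \to B\langle p/I_n\rangle$ is $I_n$-completely faithfully flat, and (Lemma~\ref{lem:tensorbasechange}) that self-products match as above; then $I_n$-complete faithfully flat descent gives the sheaf condition, and inverting $I_n$ finishes. Notice that once you grant the faithfully-flat claim in your step two and add the missing self-product identification, this \emph{is} the argument: your steps one and three become unnecessary, and there is no need to start from the $\O_\Prism$-\v{C}ech complex.
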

\begin{proof}
Let $(A,I) \to (B,IB)$ be a covering in $X_{\Prism^\circ}$, we need to show 
$$
\O_{\mathcal{E},\Prism}^{\dagger,n}((A,I)) \to \O_{\mathcal{E},\Prism}^{\dagger,n}((B,IB)) \quad\substack{\rightarrow\\[0.05em] \rightarrow \\[-0.05em]}\quad \O_{\mathcal{E},\Prism}^{\dagger,n}((B\hat\otimes_A B,IB\hat\otimes_A B))
$$
is exact. By the definition of $\O_{\mathcal{E},\Prism}^{\dagger,n}$, it is enough to show the following lemma.
\end{proof}

\begin{lemma}\label{lem:A<>isasheaf}
We have $(A,I) \mapsto A\langle\frac{p}{I_n}\rangle$ defines a sheaf on $X_{\Prism^\circ}$.
\end{lemma}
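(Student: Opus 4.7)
The plan is to deduce the sheaf property from that of $\O_\Prism$ on $X_{\Prism^\circ}$ via a tensor-then-complete argument, using transversality throughout. Fix a cover $(A,I) \to (B, IB)$ in $X_{\Prism^\circ}$, and let $d$ be a generator of $I_n$, so that $A[p/I_n] \cong A[x]/(dx - p)$. By transversality $d$ is a non-zero divisor in $A$, hence $A[x] \xrightarrow{dx - p} A[x]$ is injective and yields a two-term free resolution of $A[p/I_n]$ as an $A$-module. Proposition~\ref{prop:transversal}(1) shows $B$ and $B \hat\otimes_A B$ are transversal as well (as flat covers of a transversal prism), so $d$ remains a non-zero divisor in them, and the same resolution gives
\[
\Tor^A_1(A[p/I_n], B) = \Tor^A_1(A[p/I_n], B \hat\otimes_A B) = 0.
\]
Applying $- \otimes_A A[p/I_n]$ to the exact sequence $0 \to A \to B \to B \hat\otimes_A B$ (which holds because $\O_\Prism$ is a sheaf) therefore preserves left exactness and produces
\[
0 \to A[p/I_n] \to B[p/I_n] \to (B \hat\otimes_A B)[p/I_n].
\]

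Next I apply $I_n$-adic completion. Each of $A[p/I_n]$, $B[p/I_n]$, $(B\hat\otimes_A B)[p/I_n]$ is $I_n$-torsion free, since each embeds into the corresponding $d$-localization, and the truncation system $\{M/I_n^k\}$ is Mittag-Leffler because the transition maps are surjective. Hence the derived $I_n$-adic completion of each term agrees with its underived $I_n$-adic completion. As derived completion is exact (being a Bousfield reflection onto derived complete objects), it sends the above left-exact sequence to the left-exact sequence
\[
0 \to A\langle p/I_n\rangle \to B\langle p/I_n\rangle \to (B\hat\otimes_A B)\langle p/I_n\rangle,
\]
which is precisely the sheaf condition for $(A,I) \mapsto A\langle p/I_n\rangle$.

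The main subtlety is the interaction between the non-flat tensor $-\otimes_A A[p/I_n]$ and the $I_n$-adic completion. Transversality is the key hypothesis that makes both steps go through: it guarantees the two-term resolution exists uniformly across all covers (so that the relevant $\Tor^A_1$ vanish), and it provides the $I_n$-torsion freeness needed to identify derived and underived completions. An alternative route, should one prefer to avoid the derived completion formalism, is to reduce modulo $I_n^k$ and check the equalizer at each finite level using flat descent, and then pass to the inverse limit; one still uses transversality in exactly the same places.
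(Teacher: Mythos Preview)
There is a real gap in the tensor step. Knowing $\mathrm{Tor}_1^A(A[p/I_n],B)=\mathrm{Tor}_1^A(A[p/I_n],B\hat\otimes_A B)=0$ does \emph{not} imply that $-\otimes_A A[p/I_n]$ preserves left exactness of $0\to A\to B\to B\hat\otimes_A B$. Write $C=B/A$ and $D=(B\hat\otimes_A B)/C$. Injectivity of $A[p/I_n]\to B[p/I_n]$ is governed by $\mathrm{Tor}_1^A(A[p/I_n],C)$, and exactness at $B[p/I_n]$ needs $C\otimes_A A[p/I_n]\hookrightarrow (B\hat\otimes_A B)[p/I_n]$, which is controlled by $\mathrm{Tor}_1^A(A[p/I_n],D)$. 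Your two-term resolution does give projective dimension $\le 1$, so $\mathrm{Tor}_1^A(A[p/I_n],-)$ is left exact; combined with $C\hookrightarrow B\hat\otimes_A B$ and your vanishing there, this handles the first point. But for the second you need $D\hookrightarrow B\hat\otimes_A B\hat\otimes_A B$, i.e.\ acyclicity of the Amitsur complex for $A\to B$ one step beyond the sheaf axiom, together with transversality of the triple self-product (so that $\mathrm{Tor}_1^A(A[p/I_n],D)\hookrightarrow \mathrm{Tor}_1^A(A[p/I_n],B^{\hat\otimes 3})=0$). This input is available, but you must invoke it explicitly. The completion step inherits the same issue: ``derived completion is exact'' applies to triangles, not to bare left-exact sequences, and to splice the pieces back together you need the successive quotients (such as $D\otimes_A A[p/I_n]$) to be $I_n$-torsion free, which again pushes you further down the Amitsur complex.

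For comparison, the paper proceeds differently: it first proves directly that $A\langle p/I_n\rangle\to B\langle p/I_n\rangle$ is $I_n$-completely faithfully flat (Lemma~\ref{lem:A<>toB<>iscompletefaithfullyflat}) and that $(B\hat\otimes_A B)\langle p/I_n\rangle\simeq B\langle p/I_n\rangle\hat\otimes_{A\langle p/I_n\rangle}B\langle p/I_n\rangle$ (Lemma~\ref{lem:tensorbasechange}), and then invokes $I_n$-complete faithfully flat descent. That route bypasses the subquotient bookkeeping entirely and simultaneously yields descent for finite projective modules (Corollary~\ref{cor:descent}), which is used later.
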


And we will deduce Lemma~\ref{lem:A<>isasheaf} from the following two lemmas.
\begin{lemma}\label{lem:A<>toB<>iscompletefaithfullyflat}
Let $(A,I) \to (B,IB)$ be a covering of transversal prisms, then $A\langle \frac{p}{I_n}\rangle \to B\langle \frac{p}{I_n}\rangle
$ is $I_n$-complete faithfully flat.
\end{lemma}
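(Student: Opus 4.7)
The plan is to realize $A\langle p/I_n\rangle$ as the classical $(p,I_n)$-adic completion of an explicit finitely presented $A$-algebra, and then propagate faithful flatness through base change followed by completion. The key fact I would exploit is that transversality makes $I_n$ a non-zero-divisor (Proposition~\ref{prop:transversal}(2) combined with the Cartier divisor statement just established), so that the naive presentation works on the nose: $A[p/I_n] \cong A[T]/(I_n T - p)$, which is both $p$- and $I_n$-torsion free. By Proposition~\ref{prop:transversal}(1), $(B,IB)$ is again transversal, so the analogous presentation holds for $B$, and by base change of this presentation I obtain $B[p/I_n] \cong A[p/I_n] \otimes_A B$ as an actual (non-derived) tensor product.

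Next I would reduce the faithful flatness statement modulo $(p,I_n)$. By Remark~\ref{rem:adictop} the $(p,I_n)$-adic and $(p,I)$-adic topologies coincide on $A$, and modulo $(p,I_n)$ the map $A[p/I_n] \to B[p/I_n]$ becomes the base change of $A/(p,I) \to B/(p,I)$---faithfully flat by the covering hypothesis on prisms---along the polynomial extension $A/(p,I) \to (A/(p,I))[T]/(I_n T - p) = (A/(p,I))[T]$. Hence $A[p/I_n] \to B[p/I_n]$ is $(p,I_n)$-completely faithfully flat. Since the rings involved are $p$- and $I_n$-torsion free, their classical $(p,I_n)$-adic completions agree with their derived completions, and $(p,I_n)$-complete faithful flatness passes to these completions. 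This yields that $A\langle p/I_n\rangle \to B\langle p/I_n\rangle$ is $(p,I_n)$-completely faithfully flat. Finally, since $p = I_n \cdot (p/I_n)$ lies in the ideal $(I_n)$ inside $A\langle p/I_n\rangle$, the $(p,I_n)$-adic and $I_n$-adic topologies agree on both rings, giving the desired $I_n$-complete faithful flatness.

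The main obstacle I anticipate is the bookkeeping in the middle step: identifying $B[p/I_n]$ with the honest tensor product $A[p/I_n] \otimes_A B$, and ensuring that reduction modulo $(p,I_n)$ at the uncompleted level genuinely detects faithful flatness. Both points lean crucially on the Cartier divisor property of $I_n$ on transversal prisms, so the transversality hypothesis is doing essential work; once those identifications are in place, the remaining passage to classical completions and the topology comparison are routine.
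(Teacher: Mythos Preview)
Your approach is essentially the same as the paper's: both realize $A[p/I_n]$ via the presentation $A[T]/(\tilde\xi_n T - p)$ (where $\tilde\xi_n$ generates $I_n$), identify its reduction modulo $I_n$ with a polynomial ring over $A/(p,\tilde\xi_n)$, and deduce $I_n$-complete faithful flatness from that of $A \to B$ together with torsion-freeness of the completions.

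There is one concrete slip to correct. When you reduce $A[p/I_n] = A[T]/(\tilde\xi_n T - p)$ modulo $(p, I_n)$, you obtain $(A/(p, I_n))[T]$, \emph{not} $(A/(p,I))[T]$; the ideals $(p,I)$ and $(p,I_n)$ share a radical but are not equal. So the map you must check is $A/(p, I_n) \to B/(p, I_n)$, and this is not literally ``the covering hypothesis'': the covering gives $(p,I)$-complete faithful flatness, hence $(p,I_n)$-complete faithful flatness by the topology comparison, but to conclude that $B \otimes_A^{\mathbb L} A/(p, I_n)$ is discrete and equals $B/(p,I_n)$ you still need $(p, \tilde\xi_n)$ to be a regular sequence in $B$. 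That follows from Proposition~\ref{prop:transversal}(2) (plus the elementary fact that a regular pair of non-zero-divisors can be permuted), and it is also exactly what makes your presentation $B[p/I_n] = B[T]/(\tilde\xi_n T - p)$ valid in the first place. The paper carries out precisely this verification via its $B_{m_1,m_2}$ computation and the polynomial-ring analogue in equation~\eqref{eq:1}; once you patch this step, your argument and theirs coincide.
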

\begin{proof}
First, by \cite[Lemma 3.6]{BS19}, $\varphi(I)$ is principle generated by a distinguished element $\xi$. And we have $I_n$ is generated by $\tilde{\xi}_n\coloneqq\prod_{i=0}^{n-1}\varphi^i(\xi)$. 

By discussions in \cite[page 11]{BS19} or \cite[\S4.1]{BMS2}, we have if an ideal $J$ of $A$ induces the same topology as $(p,I)$, then $A \to B$ is also $J$-complete flat. In particular, we have $A \to B$ is $(p,\tilde{\xi}_n)$-complete flat by Remark~\ref{rem:adictop}. Moreover it is also $(p,\tilde{\xi}_n)$-complete faithfully flat since $A/(p,\tilde{\xi}_n)$ and $A/(p,I)$ have the same spectrum. 

For $m_1,m_2\in \N$, consider $B_{m_1,m_2} \coloneqq B \otimes_A^{\mathbb{L}} A/(p^{m_1},\xi^{m_2})$. Then $B_{m_1,m_2} = (B \otimes_A^{\mathbb{L}} A/\xi^{m_2}) \otimes_{A/\xi^{m_2}}^{\mathbb{L}} A/(p^{m_1},\xi^{m_2})$. By \cite[Lemma 4.7]{BMS2} or the proof of (2) in \cite[Lemma 3.7]{BS19}, we have $B \otimes_A^{\mathbb{L}} A/\xi^{m_2}$ is discrete and equal to $B/\xi^{m_2}$. Now we can repeat this argument, using the fact that $B/\xi^{m_2}$ is $p$-complete flat over $A/\xi^{m_2}$, so $B_{m_1,m_2}$ is discrete and equal to $B/(p^{m_1},\xi^{m_2})$. 

Note that by \cite[Tag 00M8]{stacks-project}, one also have $A[X] \to B[X]$ is $(p,\xi)$-complete faithfully flat, and $(p,\tilde{\xi}_n)=(p,\xi^m)=(\tilde{\xi}_n,\tilde{\xi}_nX-p)$  in $A[X]$ for some $m\in \N$. So same argument shows 
\begin{IEEEeqnarray*}{rCl}
B[X] \otimes_{A[X]}^{\mathbb{L}} A[X]/(\tilde{\xi}_nX-p) & \simeq & B[X]/(\tilde{\xi}_nX-p)
\end{IEEEeqnarray*} 
and
\begin{IEEEeqnarray*}{rCl}
B[X] \otimes_{A[X]}^{\mathbb{L}} A[X]/(\tilde{\xi}_n^k,\tilde{\xi}_nX-p) & \simeq & B[X]/(\tilde{\xi}_n^k,\tilde{\xi}_nX-p) \IEEEyesnumber\label{eq:1}\\
& \simeq & B[\frac{p}{\tilde{\xi}_n}]/(p^k,\tilde{\xi}_n^k)=B[\frac{p}{\tilde{\xi}_n}]/(p^k,\xi^{km}).
\end{IEEEeqnarray*} 
When $k=1$, this implies $B\langle \frac{p}{\tilde{\xi}_n}\rangle/(\tilde{\xi}_n) \simeq B/(p,\tilde{\xi}_n)[X]$. So we have $A\langle \frac{p}{\tilde{\xi}_n}\rangle/(\tilde{\xi}_n) \to B\langle \frac{p}{\tilde{\xi}_n}\rangle/(\tilde{\xi}_n)$ is faithfully flat since $A \to B$ is $(p,\tilde{\xi}_n)$-complete faithfully flat.

It remains to show $\mathrm{Tor}^{A\langle \frac{p}{\tilde{\xi}_n}\rangle}_i(A\langle \frac{p}{\tilde{\xi}_n}\rangle/(\tilde{\xi}_n),B\langle \frac{p}{\tilde{\xi}_n}\rangle)=0 $ for $i>0$. It is enough to show this for $i=1$, we claim 
$$
\mathrm{Tor}^{A\langle \frac{p}{\tilde{\xi}_n}\rangle}_1(A\langle \frac{p}{\tilde{\xi}_n}\rangle/(\tilde{\xi}_n),B\langle \frac{p}{\tilde{\xi}_n}\rangle)= B\langle \frac{p}{\tilde{\xi}_n}\rangle [\tilde{\xi}_n]
$$
vanishes. This is because when $B$ is transversal, $B \subset B[\frac{p}{\tilde{\xi}_n}] \subset B[\frac{1}{\tilde{\xi}_n}]$ are $\tilde{\xi}_n$-torsion free, and the claim comes from the following fact: if $R$ is $d$-torsion free, let $\hat{x}=(x_n)_n \in \varprojlim_n R/d^n$ be an element in the completion such that $d\hat{x}=0$, then $\hat{x}=0$. Let $y_n\in R$ be any lifting of $x_n$, then $dy_n \in d^n R$ from the definition. Since $R$ is $d$-torsion free, we have $y_n \in d^{n-1} R$ which implies $x_{n-1}=0$. But $n$ is arbitrary, so $\hat{x}=0$.
\end{proof}

\begin{lemma}\label{lem:tensorbasechange}
Let $(A,I) \to (B,IB)$ be a covering of transversal prisms, and let $\tilde{\xi}_n$ be as in the proof of Lemma~\ref{lem:A<>toB<>iscompletefaithfullyflat}, then 
$$
(B\hat{\otimes}_A B) \langle \frac{p}{\tilde{\xi}_n} \rangle \simeq B\langle \frac{p}{\tilde{\xi}_n} \rangle \hat{\otimes}_{A \langle \frac{p}{\tilde{\xi}_n} \rangle} B\langle \frac{p}{\tilde{\xi}_n} \rangle
$$
where $-\hat{\otimes}-$ is the $(p,\tilde{\xi}_n)$-complete tensor product.
\end{lemma}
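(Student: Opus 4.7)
\smallskip

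My plan is to reduce the identity to the base-change formula
$$
C \mathbin{\hat\otimes}_A A\langle p/\tilde\xi_n\rangle \;\simeq\; C\langle p/\tilde\xi_n\rangle
$$
valid for any $(p,\tilde\xi_n)$-completely flat $A$-algebra $C$, applied to $C=B$ and $C=B\mathbin{\hat\otimes}_A B$. Recall from the previous lemma that $A\langle p/\tilde\xi_n\rangle$ admits the presentation $A[X]/(\tilde\xi_n X-p)$, completed $(p,\tilde\xi_n)$-adically, and also that $A\to B$ is $(p,\tilde\xi_n)$-completely flat, hence so is $A\to B\mathbin{\hat\otimes}_A B$ (as a composition of flat maps, or by the pushout property).

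First I would establish the base-change identity. Running exactly the argument from the proof of Lemma~\ref{lem:A<>toB<>iscompletefaithfullyflat} with $C$ in place of $B$, the key point is the chain of derived identifications
$$
C[X]\mathbin{\otimes}^{\mathbb L}_{A[X]} A[X]/(\tilde\xi_n^k,\tilde\xi_nX-p) \;\simeq\; C[X]/(\tilde\xi_n^k,\tilde\xi_nX-p) \;\simeq\; C[p/\tilde\xi_n]/(p^k,\tilde\xi_n^k),
$$
which uses that $A[X]\to C[X]$ is $(p,\tilde\xi_n)$-completely flat and that the relevant Tor groups vanish. Taking the derived limit over $k$ and using derived $(p,\tilde\xi_n)$-completeness, the left-hand side computes $C\mathbin{\hat\otimes}_A A\langle p/\tilde\xi_n\rangle$ (in the derived sense), while the right-hand side is the ordinary completion $C\langle p/\tilde\xi_n\rangle$; so both sides are discrete and equal.

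Next I would combine two applications of this base-change identity using associativity of the derived completed tensor product. On one hand,
$$
B\langle p/\tilde\xi_n\rangle \mathbin{\hat\otimes}_{A\langle p/\tilde\xi_n\rangle} B\langle p/\tilde\xi_n\rangle
\;\simeq\; \bigl(B\mathbin{\hat\otimes}_A A\langle p/\tilde\xi_n\rangle\bigr) \mathbin{\hat\otimes}_{A\langle p/\tilde\xi_n\rangle} \bigl(A\langle p/\tilde\xi_n\rangle \mathbin{\hat\otimes}_A B\bigr)
\;\simeq\; B\mathbin{\hat\otimes}_A A\langle p/\tilde\xi_n\rangle \mathbin{\hat\otimes}_A B.
$$
On the other hand, since $B\mathbin{\hat\otimes}_A B$ is itself $(p,\tilde\xi_n)$-completely flat over $A$, the base-change identity applied to $C=B\mathbin{\hat\otimes}_A B$ gives
$$
(B\mathbin{\hat\otimes}_A B)\langle p/\tilde\xi_n\rangle
\;\simeq\; (B\mathbin{\hat\otimes}_A B) \mathbin{\hat\otimes}_A A\langle p/\tilde\xi_n\rangle
\;\simeq\; B\mathbin{\hat\otimes}_A A\langle p/\tilde\xi_n\rangle \mathbin{\hat\otimes}_A B,
$$
where the last step is just reordering the (derived) completed tensor factors. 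Comparing these two chains produces the desired isomorphism.

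The main obstacle, as in Lemma~\ref{lem:A<>toB<>iscompletefaithfullyflat}, is checking that all completed tensor products actually agree with their derived analogues, i.e.\ that the relevant Tor groups vanish so that nothing higher in degree can obstruct the comparison. This is precisely what the Tor vanishing argument in the previous lemma provides, applied both to the pair $(A,B)$ and to the pair $(A,B\mathbin{\hat\otimes}_A B)$; once this flatness input is granted, the proof is a purely formal manipulation of pushouts in the category of derived $(p,\tilde\xi_n)$-complete rings.
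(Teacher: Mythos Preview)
Your proposal is correct and follows essentially the same strategy as the paper: both arguments reduce modulo $\tilde\xi_n^k$, use the $(p,\tilde\xi_n)$-complete flatness of $A\to B$ (and hence of $A\to B\hat\otimes_A B$) to identify each side with $(B\otimes_A B)[X]/(p^k,\xi^{km},\tilde\xi_n X-p)$ via Eq.~(\ref{eq:1}), and then pass to the limit. Your packaging via the base-change identity $C\hat\otimes_A A\langle p/\tilde\xi_n\rangle\simeq C\langle p/\tilde\xi_n\rangle$ and associativity of the derived completed tensor product is a slightly more conceptual rephrasing of the paper's direct term-by-term comparison, but the content is the same.
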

\begin{proof}
With all the notations used in the proof of Lemma~\ref{lem:A<>toB<>iscompletefaithfullyflat}, we have $A \to B$ is $(p,\xi)$-complete faithfully flat, so does $A \to B\hat{\otimes}_A B$. So if one replace $B$ with $B\hat{\otimes}_A B$ in Eq.~(\ref{eq:1}) then we will get
\begin{IEEEeqnarray*}{rCl}
(B\hat{\otimes}_A B)[X] \otimes_{A[X]} A[X]/(\tilde{\xi}_n^k,\tilde{\xi}_nX-p) & \simeq & (B\hat{\otimes}_A B)[\frac{p}{\tilde{\xi}_n}]/(p^k,\xi^{km}) \IEEEyesnumber\label{eq:2}\\
& \simeq & (B{\otimes}_A B)[X]/(p^k,\xi^{km},\tilde{\xi}_nX-p)
\end{IEEEeqnarray*} 
where the last isomorphism is by definition. On the other hand, we have 
$$
B\langle \frac{p}{\tilde{\xi}_n} \rangle \hat{\otimes}_{A \langle \frac{p}{\tilde{\xi}_n} \rangle} B\langle \frac{p}{\tilde{\xi}_n} \rangle \simeq \varprojlim_k \big( (B\langle \frac{p}{\tilde{\xi}_n} \rangle {\otimes}_{A \langle \frac{p}{\tilde{\xi}_n} \rangle} B\langle \frac{p}{\tilde{\xi}_n} \rangle)\otimes_{A \langle \frac{p}{\tilde{\xi}_n} \rangle} A[X]/(p^k,\xi^{km},\tilde{\xi}_nX-p) \big).
$$
Using commuting with base extension property of tensor product, we have
$$
\big(B\langle \frac{p}{\tilde{\xi}_n} \rangle {\otimes}_{A \langle \frac{p}{\tilde{\xi}_n} \rangle} B\langle \frac{p}{\tilde{\xi}_n} \rangle \big)\otimes_{A \langle \frac{p}{\tilde{\xi}_n} \rangle} \big(A[X]/(p^k,\xi^{km},\tilde{\xi}_nX-p)\big)
$$
is isomorphic to
$$
B[X]/(p^k,\xi^{km},\tilde{\xi}_nX-p) \otimes_{A[X]/(p^k,\xi^{km},\tilde{\xi}_nX-p)} B[X]/(p^k,\xi^{km},\tilde{\xi}_nX-p)
$$
here we also use Eq.~\ref{eq:1}. Moreover, this is also isomorphic to Eq.~\ref{eq:2}.
\end{proof}

\begin{proof}[Proof of Lemma~\ref{lem:A<>isasheaf}]
We have $A\langle \frac{p}{I_n}\rangle \to B\langle \frac{p}{I_n}\rangle
$ is $I_n$-complete faithfully flat by Lemma~\ref{lem:A<>toB<>iscompletefaithfullyflat}. So Lemma~\ref{lem:A<>isasheaf} follows from Lemma~\ref{lem:tensorbasechange} and $I_n$-complete faithfully flat descent.
\end{proof}

\begin{definition}
A crystal in finite free $\O_{\mathcal{E},\Prism}^{\dagger,n}$-modules is a sheaf $\mathcal{M}_{\Prism}$ of locally finite free $\O_{\mathcal{E},\Prism}^{\dagger,n}$-modules such that for all $(A,I) \to (B,IB)$ of prisms over $X$, we have the canonical map
$$
\mathcal{M}_{\Prism}((A,I))\widehat \otimes_{A} B \to \mathcal{M}_{\Prism}((B,IB))
$$
is an isomorphism. We will denote this category by $\mathrm{Vect}(X_{\Prism^\circ},\O_{\mathcal{E},\Prism}^{\dagger,n})$.
\end{definition}

\begin{remark}
Using \cite[Proposition 2.4.1]{Bhatt-Lurie-Absoluteprismaticcohomology}, one could extend a crystal in $\mathrm{Vect}(X_{\Prism^\circ},\O_{\mathcal{E},\Prism}^{\dagger,n})$ to a crystal in $\mathrm{Vect}(X_{\Prism},\O_{\mathcal{E},\Prism}^{\dagger,n})$. But we will actually only work with transversal (even oriented transversal) prisms over $X$ in this paper.
\end{remark}

\begin{corollary}\label{cor:descent}
Let $(A,I) \to (B,IB)$ be a covering of transversal prisms, then we have
\begin{IEEEeqnarray}{rCl}\label{eq:3}
   \mathrm{Vect}(\O_{\mathcal{E},\Prism}^{\dagger,n}(A,I)) \xrightarrow{\sim} \varprojlim\big(\mathrm{Vect}(\O_{\mathcal{E},\Prism}^{\dagger,n}(B,I)) \quad\substack{\rightarrow\\[0.05em] \rightarrow \\[-0.05em]}\quad \mathrm{Vect}(\O_{\mathcal{E},\Prism}^{\dagger,n}(B\widehat{\otimes}_A B)) \quad\substack{\rightarrow\\[-1em] \rightarrow \\[-1em] \rightarrow} \quad \cdots\big) 
\end{IEEEeqnarray}
here $\mathrm{Vect}(R)$ is the category of finite projective modules of $R$.
\end{corollary}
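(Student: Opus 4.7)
The plan is to upgrade the sheafyness statement of Lemma~\ref{lem:A<>isasheaf} from rings to finite projective modules, by applying $I_n$-complete faithfully flat descent to the map of completions $A\langle p/I_n\rangle \to B\langle p/I_n\rangle$, and then inverting $I_n$ at the end. The setup provided by Lemmas~\ref{lem:A<>toB<>iscompletefaithfullyflat} and~\ref{lem:tensorbasechange} essentially reduces Corollary~\ref{cor:descent} to a citation of standard descent for vector bundles, once one propagates those lemmas up the \v{C}ech nerve.

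First I would extend Lemma~\ref{lem:tensorbasechange} to all higher terms of the \v{C}ech nerve by induction on $r\geq 2$, showing
\[
(B^{\widehat{\otimes}_A r})\langle p/I_n\rangle \;\simeq\; B\langle p/I_n\rangle^{\widehat{\otimes}_{A\langle p/I_n\rangle} r},
\]
where the right-hand side is the $I_n$-adically completed tensor product (equivalent to the $(p,I_n)$-adic one by Remark~\ref{rem:adictop}). The inductive step is a verbatim repetition of the computation in Lemma~\ref{lem:tensorbasechange}, applied to the covering $A\to B^{\widehat{\otimes}_A (r-1)}$, which remains $(p,I)$-completely faithfully flat by iterating Lemma~\ref{lem:A<>toB<>iscompletefaithfullyflat}. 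Combined with Lemma~\ref{lem:A<>toB<>iscompletefaithfullyflat} itself, this identifies the \v{C}ech nerve of $A\langle p/I_n\rangle \to B\langle p/I_n\rangle$ with the completed tensor products on the right, and exhibits it as an $I_n$-completely faithfully flat cover.

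Second, I would invoke the standard descent result for finite projective modules along $I$-completely faithfully flat maps (cf.\ the discussion in \cite[\S 2]{BS2021Fcrystals}, ultimately a form of Drinfeld--Mathew descent) to deduce the equivalence
\[
\mathrm{Vect}(A\langle p/I_n\rangle) \xrightarrow{\ \sim\ } \varprojlim \mathrm{Vect}\bigl(B\langle p/I_n\rangle^{\widehat{\otimes}_{A\langle p/I_n\rangle} \bullet}\bigr).
\]
Finally, I would pass to the localization at $I_n$: since localization at $I_n$ is exact, sends finite projective modules to finite projective modules, and since $I_n$-adic completion becomes trivial after inverting $I_n$ for finitely presented modules, applying $[1/I_n]$ termwise to the above cosimplicial diagram produces exactly the \v{C}ech nerve of $\O^{\dagger,n}_{\E,\Prism}(A,I) \to \O^{\dagger,n}_{\E,\Prism}(B,I)$, and yields the desired equivalence~\eqref{eq:3}.

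The main obstacle is this last passage: verifying that inverting $I_n$ interacts correctly with both the completed tensor products and the cosimplicial limit. Concretely, one must check that for a finite projective $A\langle p/I_n\rangle$-module $M$ the localization $M[1/I_n]$ still satisfies $(M\,\widehat{\otimes}_{A\langle p/I_n\rangle} R)[1/I_n] \simeq M[1/I_n] \otimes_{A\langle p/I_n\rangle[1/I_n]} R[1/I_n]$ for each term $R$ of the nerve, which reduces to the finite presentation of $M$ and the fact that $A\langle p/I_n\rangle[1/I_n]\to R[1/I_n]$ is the unlocalized tensor product of the corresponding map. Once this compatibility is installed, the corollary follows formally.
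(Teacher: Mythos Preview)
Your overall strategy --- feed Lemmas~\ref{lem:A<>toB<>iscompletefaithfullyflat} and~\ref{lem:tensorbasechange} into a standard $I_n$-complete faithfully flat descent theorem --- is exactly what the paper does; its proof is the one-line citation of \cite[Theorem 5.8]{Mathew2020faithfully} or \cite[Theorem 2.2(3)]{BS2021Fcrystals} after those two lemmas. The extension of Lemma~\ref{lem:tensorbasechange} to higher \v{C}ech terms is implicit in the paper and your inductive sketch is fine.

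The gap is in your ``pass to the localization'' step. Having the equivalence $\mathrm{Vect}(A\langle p/I_n\rangle)\simeq \varprojlim \mathrm{Vect}(B\langle p/I_n\rangle^{\widehat\otimes\bullet})$ and then applying $[1/I_n]$ termwise does \emph{not} formally yield $\mathrm{Vect}(A\langle p/I_n\rangle[1/I_n])\simeq \varprojlim \mathrm{Vect}(B\langle p/I_n\rangle^{\widehat\otimes\bullet}[1/I_n])$. The compatibility you check --- that $(M\widehat\otimes R)[1/I_n]\simeq M[1/I_n]\otimes R[1/I_n]$ --- only identifies the localized cosimplicial diagram with the \v{C}ech nerve of the localized map; it says nothing about essential surjectivity. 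A finite projective module over $B\langle p/I_n\rangle[1/I_n]$ (or a descent datum there) need not extend to one over $B\langle p/I_n\rangle$, so you cannot pull it back through the integral equivalence. In short, localization of a descent equivalence is not a descent equivalence.

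The fix is not to route through the integral statement at all. The results you cite (Drinfeld--Mathew descent, \cite[Theorem 2.2(3)]{BS2021Fcrystals}) already assert that an $I_n$-completely faithfully flat map of derived $I_n$-complete rings satisfies descent for $\mathrm{Vect}$ \emph{after arbitrary base change}, in particular after inverting $I_n$: the underlying mechanism is that such a map is descendable in Mathew's sense, and descendability is stable under base change. So once Lemmas~\ref{lem:A<>toB<>iscompletefaithfullyflat} and~\ref{lem:tensorbasechange} (and your higher-nerve extension) are in place, you should invoke the descent theorem directly for $\O_{\mathcal{E},\Prism}^{\dagger,n}(A,I)\to \O_{\mathcal{E},\Prism}^{\dagger,n}(B,IB)$, exactly as the paper does, rather than for the integral rings followed by localization.
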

\begin{proof}
This also follows from Lemma~\ref{lem:A<>toB<>iscompletefaithfullyflat} and Lemma~\ref{lem:tensorbasechange}, then apply \cite[Theorem 5.8]{Mathew2020faithfully} or (3) in \cite[Theorem 2.2]{BS2021Fcrystals}.
\end{proof}

We have for $n>0$, there is a natural map $\iota_n\colon\O_{\mathcal{E},\Prism}^{\dagger,{n}} \to \O_{\mathcal{E},\Prism}^{\dagger,{n+1}}$, and we define
$$
\O_{\mathcal{E},\Prism}^{\dagger}\coloneqq\colim_{\iota_n,n\geq 1} \O_{\mathcal{E},\Prism}^{\dagger,{n}}
$$
as sheaves over $X_{\Prism^\circ}$. On the other hand, we have $\varphi_\Prism$ on $\O_\Prism$ defines a map $$
\varphi_n\colon\O_{\mathcal{E},\Prism}^{\dagger,{n}} \to \O_{\mathcal{E},\Prism}^{\dagger,{n+1}}
$$
which induces a map $\varphi_\Prism\colon\O_{\mathcal{E},\Prism}^{\dagger} \to \O_{\mathcal{E},\Prism}^{\dagger}$.

\begin{definition}\label{def:overconvergentFcrystal}
An overconvergent Laurent $F$-crystal over $X$ is a crystal $\mathcal{M}_\Prism$ in finite free $\O_{\mathcal{E},\Prism}^{\dagger}$-modules together with an isomorphism
$$
\varphi_{\mathcal{M}_\Prism}\colon\varphi_\Prism^\ast \mathcal{M}_\Prism \simeq \mathcal{M}_\Prism.
$$
And we let $\mathrm{Vect}(X_{\Prism^\circ},\O_{\mathcal{E},\Prism}^{\dagger})^{\varphi=1}$ to be category of overconvergent Laurent $F$-crystals over $X$.
\end{definition}

Let's recall the following general facts about sections on colimits of sheaves.
\begin{lemma}[{\cite[Tag 0738]{stacks-project}}]\label{lem:colimitcommutewithsections}
Let $\mathcal{C}$ be a site. Let $I \to \Shv(\mathcal{C})$, $i \mapsto \mathcal{F}_i$ be a filtered diagram of
sheaves of sets. Let $U$ be a quasi-compact object of $\mathcal{C}$ in the sense of \cite[Tag 090H]{stacks-project}. Consider the canonical map
$$
\Psi \colon \colim_{i} \mathcal{F}_i(U) \to (\colim_i \mathcal{F}_i) (U),
$$
where $(\colim_i \mathcal{F}_i)$ is taken inside $\Shv(\mathcal{C})$, then $\Psi$ is an isomorphism if all transition maps over sections over $U$ are injective.
\end{lemma}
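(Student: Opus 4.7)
The plan is to identify $\colim_i \mathcal{F}_i$ in $\Shv(\mathcal{C})$ with the sheafification of the presheaf colimit $\mathcal{G} := \colim_i^{\mathrm{PSh}} \mathcal{F}_i$, whose value $\mathcal{G}(U)$ is tautologically $\colim_i \mathcal{F}_i(U)$ by construction of colimits in the category of presheaves. Then $\Psi$ becomes the unit of the sheafification adjunction evaluated at $U$, and I will check injectivity and surjectivity separately by unraveling the plus-construction.

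For injectivity, suppose $s, s' \in \mathcal{G}(U)$ have the same image in $\mathcal{G}^{\#}(U)$. They must then agree locally on some cover $\{U_j \to U\}_j$ of $U$. Choosing representatives in a common $\mathcal{F}_{i_0}(U)$, each local equality $s|_{U_j} = s'|_{U_j}$ in $\colim_i \mathcal{F}_i(U_j)$ is witnessed at some finite index $i_j$; quasi-compactness of $U$ lets me reduce to a finite cover, and filteredness of $I$ then produces a single $i' \geq i_0$ at which the restrictions coincide in every $\mathcal{F}_{i'}(U_j)$. The separation axiom for $\mathcal{F}_{i'}$ forces $s = s'$ in $\mathcal{F}_{i'}(U)$, hence in the colimit.

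For surjectivity, a section $\tau \in \mathcal{G}^{\#}(U)$ corresponds, via the plus-construction, to a matching family $\sigma_j \in \mathcal{G}(U_j)$ on some cover, compatible in $\mathcal{G}(U_j \times_U U_k)$. I again use quasi-compactness to reduce to a finite cover, filteredness to lift all $\sigma_j$ to a common level $s_j \in \mathcal{F}_{i'}(U_j)$, and a further application of filteredness to the finitely many compatibility conditions over the double overlaps to arrange that the lifted sections literally glue at some stage $\mathcal{F}_{i''}$. The sheaf axiom for $\mathcal{F}_{i''}$ then produces a section of $\mathcal{F}_{i''}(U)$ whose image in $\mathcal{G}^{\#}(U)$ is $\tau$.

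The main technical obstacle is the double-overlap step in surjectivity: the $U_j \times_U U_k$ need not themselves be quasi-compact, and the matching condition only holds in the presheaf colimit, not at the level $i'$ one first lands in. This is precisely where the hypothesis that transition maps of sections over $U$ are injective is meant to do work, ensuring that representatives at higher indices do not collapse in unexpected ways and that the glued section faithfully returns to $\tau$ under $\Psi$. Once this subtlety is navigated, both directions follow mechanically from the sheaf axioms for the individual $\mathcal{F}_{i'}$.
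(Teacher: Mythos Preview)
The paper does not supply a proof; it simply cites \cite[Tag 0738]{stacks-project}. So there is no paper argument to compare against, and I will assess your proof on its own.

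Your injectivity argument is correct and does not use the injectivity hypothesis at all: quasi-compactness of $U$ lets you pass to a finite cover, filteredness lets you push the finitely many local equalities to a common index, and separatedness of the sheaf $\mathcal{F}_{i'}$ finishes. The core of your surjectivity paragraph is also correct: once you have a matching family $\sigma_j \in \mathcal{G}(U_j)$ on a finite cover with agreement in $\mathcal{G}(U_j \times_U U_k)$, filteredness alone (two elements of $\mathcal{F}_{i'}(U_{jk})$ that become equal in the colimit are already equal at some $i_{jk} \geq i'$) pushes the finitely many compatibilities to a common $i''$, and you glue in $\mathcal{F}_{i''}$. Quasi-compactness of the overlaps is irrelevant here, so the ``main technical obstacle'' you flag in your last paragraph is not an obstacle, and your attribution of its resolution to the injectivity hypothesis is misplaced.

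The genuine gap is earlier: you assert that $\tau \in \mathcal{G}^{\#}(U)$ ``corresponds, via the plus-construction, to a matching family $\sigma_j \in \mathcal{G}(U_j)$.'' This is only true if one application of $(-)^+$ suffices, i.e.\ if the presheaf colimit $\mathcal{G}$ is already separated. Without that, a section of $\mathcal{G}^{\#}(U) = (\mathcal{G}^+)^+(U)$ is a matching family in $\mathcal{G}^+$, and after refining to get sections in $\mathcal{G}$ the compatibility on overlaps holds only in $\mathcal{G}^+$, i.e.\ only after a further (possibly infinite) refinement of each overlap---and then your finiteness-and-filteredness trick breaks down. The injectivity hypothesis is exactly what fixes this: if the transition maps $\mathcal{F}_i(V) \to \mathcal{F}_j(V)$ are injective for \emph{all} $V$, then $\mathcal{F}_{i_0}(V) \hookrightarrow \mathcal{G}(V)$ for every $V$, and a one-line argument shows $\mathcal{G}$ is separated, hence $\mathcal{G}^+ = \mathcal{G}^{\#}$. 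Note that injectivity only over $U$ (as the lemma is literally phrased) does not seem to suffice; what the paper actually verifies and uses in its application (Corollary~\ref{cor:injectiveonsections}) is injectivity over every object of $X_{\Prism^\circ}$, which is the hypothesis you should invoke to close the gap.
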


\begin{remark}
We will see that all objects in $X_{\Prism^\circ}$ are quasi-compact by definition. Moreover, all transition maps $\iota_n$ on sections are injective by Corollary~\ref{cor:injectiveonsections}, so Lemma~\ref{lem:colimitcommutewithsections} implies taking colimits commutes with taking sections for our definition of $\O_{\mathcal{E},\Prism}^{\dagger,n}$.
\end{remark}

\subsection{Noetherian approximation via Breuil--Kisin prism}\label{sec:BKprism}
Objects in $X_{\Prism^\circ}$ can be very random, which makes it very hard to understand sections of $\O_{\mathcal{E},\Prism}^{\dagger,n}$ over them. The strategy we are going to explain in this subsection is that we first understand sections over prisms can be ``approximated" by Noetherian ones, or more precisely, the Breuil-Kisin prism, then descend properties from there to general transversal prisms. 

In this subsection, we only work on the case that $X=\Spf(\O_K)$ with $K$ a $p$-adic field, but all results in this subsection generalize to smooth affine formal schemes $\Spf(R)$ that are small, where we can also define the relative Breuil--Kisin modules as in \cite{DLMS-smoothcase} using the framing map. Recall the Breuil--Kisin prism $(\mathfrak{S},E)$ inside $(\O_K)_\Prism$ is defined as $\mathfrak{S}=W[\![u]\!]$ and $E$ is an Eisenstein polynomial over $W$ that is the minimal polynomial of a fixed uniformizer $\varpi \in \O_K$. Let's recall some basic properties of the Breuil--Kisin prisms. 

\begin{proposition}
$(\mathfrak{S},E)$ covers the final object $\ast$ in $\Shv(X_\Prism)$ and $\Shv(X_{\Prism^\circ})$.
\end{proposition}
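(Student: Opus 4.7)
The plan is to show that for every prism $(A,I) \in X_\Prism$, there exists a cover $(A,I) \to (B,IB)$ together with a morphism $(\mathfrak{S},E) \to (B,IB)$ of prisms over $\O_K$. In sheaf-theoretic terms this says precisely that the sheafification of the representable presheaf $h_{(\mathfrak{S},E)}$ surjects onto the final sheaf $\ast$. The statement for $X_{\Prism^\circ}$ will follow from the same construction, once we verify that $(B,IB)$ is transversal whenever $(A,I)$ is.

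For the construction, start with any lift $\tilde{\varpi} \in A$ of the image of the uniformizer under the structure map $\O_K \to A/I$. Next, form the $(p,I)$-completed $\delta$-$A$-algebra $A[u]^\wedge_{(p,I)}$ with the unique $\delta$-structure extending that of $A$ and satisfying $\delta(u)=0$ (equivalently $\varphi(u) = u^p$). Take the prismatic envelope of this $\delta$-pair along the ideal generated by $u-\tilde{\varpi}$ in the sense of \cite[Proposition 3.13]{BS19}; call the resulting prism $(B,IB)$. By the standard theory of prismatic envelopes, the induced map $A \to B$ is $(p,I)$-completely faithfully flat, so $(A,I) \to (B,IB)$ is a cover in $X_\Prism$.

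By construction $u - \tilde{\varpi} \in IB$, so $u$ reduces to $\varpi$ in $B/IB$; consequently $E(u) \equiv E(\tilde{\varpi}) \equiv E(\varpi) = 0 \pmod{IB}$. Thus the $\delta$-map $\mathfrak{S} = W[\![u]\!] \to B$ sending $u \mapsto u$ (and $W \to B$ through $A$) carries $E$ into $IB$, giving the desired prism map $(\mathfrak{S},E) \to (B,IB)$ over $\O_K$. For the refinement to $X_{\Prism^\circ}$, if $(A,I)$ is transversal then Proposition~\ref{prop:transversal}(1) applied to the $(p,I)$-completely flat cover $A \to B$ ensures $(B,IB)$ is transversal as well.

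The main technical obstacle is justifying that the envelope along $(u - \tilde{\varpi})$ really produces a $(p,I)$-completely faithfully flat cover; this requires verifying that $u - \tilde{\varpi}$ is a non-zero-divisor modulo $(p,I)$ in the free $\delta$-algebra, which one sees by noting that $A/I \otimes_{\O_K} k[u]$ (or an analogous quotient) is $u-\bar{\varpi}$-torsion-free. Once this Koszul-regularity is in place, all the key properties—flatness, faithful flatness, preservation of transversality—are standard consequences of the Bhatt-Scholze envelope formalism.
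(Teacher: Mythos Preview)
Your proof is correct and follows essentially the same route as the paper: both construct the product of $(A,I)$ with $(\mathfrak{S},E)$ as the prismatic envelope $A[u]\{\frac{u-\tilde\varpi}{I}\}^\wedge_{(p,I)}$ (the paper writes $v$ for your $\tilde\varpi$), then use the resulting faithfully flat cover $(A,I)\to(B,IB)$ equipped with the evident map from $(\mathfrak{S},E)$. You give more detail on the regularity hypothesis needed for the envelope and on the transversal case via Proposition~\ref{prop:transversal}(1), whereas the paper simply points to \cite[Lemma~4.1]{BS19} and the relevant example there; the arguments are otherwise the same.
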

\begin{proof}
This can be show using \cite[Lemma 4.1]{BS19}, or more concretely one can show the product of $(\mathfrak{S},E)$ with $(A,I)$ in $\Shv(X_\Prism)$ can be computed as in Example 2.6 of $loc.$ $cit.$ that we have for any prism $(A,I)\in X_\Prism$, we have the product of $(A,I)$ and $(\mathfrak{S},E)$ is 
$$
A[u]\{\frac{u-v}{I}\}^\wedge_{(p,I)}
$$
for some $v$ in $A$ that lifts the image of $\varpi$ under the structure map $\O_K \to A/I$.
\end{proof}

\begin{corollary}\label{cor:doublecoveringcoproduct}
Let $(A,I)\in X_{\Prism^\circ}$, and let $(B,EB)$ be the product of $(A,I)$ and $(\mathfrak{S},E)$ in $X_\Prism$, then the natural map $\mathfrak{S}\to B$ is classically fully faithful.
\end{corollary}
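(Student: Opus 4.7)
The plan is to show $\mathfrak{S}\to B$ is classically faithfully flat by combining the $(p,E)$-complete flatness inherent in the prismatic coproduct with a Noetherian local flatness argument. The preceding proposition provides the explicit formula $B=A[u]\{\tfrac{u-v}{I}\}^\wedge_{(p,I)}$ with $IB=EB$. Since $(A,I)\to(B,EB)$ is the base change of the cover $\ast\to(\mathfrak{S},E)$, it is itself a cover, so Proposition~\ref{prop:transversal}(1) gives that $(B,EB)$ is transversal; in particular $p,E$ is a regular sequence on the $(p,E)$-adically complete ring $B$.

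The key steps are the following. First, the map $\mathfrak{S}\to B$ is $(p,E)$-completely flat, via the general theory of prismatic envelopes applied to the coproduct description (the envelope construction is $(p,E)$-completely flat over its base, and the relevant regular-sequence hypothesis is verified by reducing modulo $E$, where $u-\bar v$ is monic in $u$ over the transversal quotient). Second, since $\mathfrak{S}$ is Noetherian and $B$ is $(p,E)$-adically separated, the local criterion for flatness (Stacks 00MK) promotes $(p,E)$-complete flatness to classical flatness of $\mathfrak{S}\to B$. Third, for classical faithful flatness, since $\mathfrak{S}$ is local Noetherian with maximal ideal $\m=(p,u)$, it suffices to check $B\otimes_\mathfrak{S} k\neq 0$; I would construct an explicit ring map $B\to (A/I)\otimes_{\O_K} k$ sending $u$, $v$, and the adjoined element $w=(u-v)/E$ all to zero (the relation $Ew=u-v$ degenerates to $0=0$ in characteristic $p$ once $u=0$, so the map is well-defined), and the target is nonzero because $A/I$ is a nonzero $p$-adically complete $\O_K$-algebra in which $\varpi$ lies in the Jacobson radical (as $\varpi^e$ equals $p$ times a unit in $\O_K$). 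This map factors through $B/\m B$, so the latter is nonzero.

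The hard part will be this non-vanishing check. The maximal ideal $\m=(p,u)$ is strictly larger than the distinguished ideal $(p,E)=(p,u^e)$, so $(p,E)$-complete faithful flatness of the coproduct does not directly produce $B/\m B\neq 0$. Carefully tracking the interaction of the lift $v\in A$ of $\varpi$ with the adjoined element $w=(u-v)/E$ modulo $(p,u)$, together with verifying compatibility of the $\delta$-structure on $B$ with the candidate map to a characteristic $p$ quotient, is the main technical ingredient; the transversality hypothesis on $(A,I)$ enters critically via the topological nilpotency of $\varpi$ in $A/I$.
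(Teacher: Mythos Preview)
Your overall outline matches the paper's: obtain $(p,E)$-complete faithful flatness of $\mathfrak{S}\to B$, then upgrade to classical faithful flatness using that $\mathfrak{S}$ is Noetherian. But your step establishing $(p,E)$-complete flatness has a genuine gap. The prismatic envelope flatness theorem applied to the coproduct formula $B=A[u]\{\tfrac{u-v}{I}\}^\wedge_{(p,I)}$ yields $(p,I)$-complete flatness of $A[u]\to B$, not of $\mathfrak{S}\to B$. Your regular-sequence check ``$u-\bar v$ monic in $u$ over the transversal quotient'' is exactly the hypothesis needed for flatness over $A[u]$; there is no symmetric envelope description of $B$ over $\mathfrak{S}$ available here, since $A$ is an arbitrary transversal prism and not a polynomial algebra over $\mathfrak{S}$. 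So as written you have not shown $\mathfrak{S}\to B$ is $(p,E)$-completely flat.

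The paper fills this gap with a one-line observation you should internalize: transversality of $(B,EB)$ says $B/EB$ is $p$-torsion free, and since $\mathfrak{S}/E=\O_K$ is a DVR, $p$-torsion free is the same as flat. Together with $E$-torsion freeness of $B$ (it is a prism) this already makes $(B,EB)\to(\mathfrak{S},E)$ a covering in $X_\Prism$. The paper then invokes \cite[Tag~0912]{stacks-project} rather than 00MK; note that 00MK requires the target ring to be Noetherian local and the module finite, neither of which holds for $B$, whereas 0912 only needs the source Noetherian and the module derived complete. Finally, your explicit map for the faithful part is correct but unnecessary: once you know $B$ is nonzero and $(p,E)$-complete, the equality of radicals $\sqrt{(p,E)}=(p,u)=\m$ forces $B/\m B\neq 0$ by Nakayama, so no hand-built quotient is needed.
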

\begin{proof}
First, recall that we have $IB=EB$ by the irreducibility of distinguished elements. We have $(B,IB)$ covers $(A,I)$ from the last proposition, so by Lemma~\ref{lem:transversalsite}, we have $(B,IB)$ is also transversal, i.e., $B/EB$ is $p$-torsion free, which will imply that $\mathfrak{S}/E \to B/E$ is flat. Moreover, one can also check that $(B,EB)$ is also a covering of $(\mathfrak{S},E)$ by the $E$-torsionfreeness of $B$. Now we have $\mathfrak{S}\to B$ is also classically faithfully flat since $\mathfrak{S}$ is Noetherian and using \cite[Tag 0912]{stacks-project}.
\end{proof}

\begin{remark}
One should compare the above corollary with \cite[Proposition 2.4.9]{Bhatt-Lurie-Absoluteprismaticcohomology}.
\end{remark}

\begin{lemma}\label{lem:injectiveofBK}
Over the Breuil--Kisin prism 
$$
\iota_{n,\mathfrak{S}}\colon\O_{\mathcal{E},\Prism}^{\dagger,n}((\mathfrak{S},E)) \to \O_{\mathcal{E},\Prism}^{\dagger,n+1}((\mathfrak{S},E))
$$
is injective for all $n$.
\end{lemma}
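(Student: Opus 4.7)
The plan is to invert $p$ and reduce the injectivity to the identity principle for rigid-analytic functions. Write $R_n \coloneqq \mathfrak{S}\langle p/\tilde{\xi}_n\rangle[1/\tilde{\xi}_n]$, so that $\iota_{n,\mathfrak{S}}\colon R_n \to R_{n+1}$. The strategy has two parts: first show $R_n$ is $p$-torsion free, and then interpret $R_n[1/p]$ rigid-analytically so that the induced map after inverting $p$ is visibly injective.

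For the $p$-torsion freeness, I would start with the observation that, by Proposition~\ref{prop:transversal}(2) combined with induction on $n$ via the factorisation $\tilde{\xi}_n = \tilde{\xi}_{n-1}\varphi^n(E)$, the pair $(p,\tilde{\xi}_n)$ is a regular sequence in $\mathfrak{S}$. Using the presentation $\mathfrak{S}[p/\tilde{\xi}_n]\cong \mathfrak{S}[X]/(\tilde{\xi}_n X-p)$, a direct calculation then shows $p$ is a non-zero-divisor in this Noetherian ring. Flatness of the $(p,\tilde{\xi}_n)$-adic completion (available because $\mathfrak{S}[p/\tilde{\xi}_n]$ is Noetherian) transfers the property to $\mathfrak{S}\langle p/\tilde{\xi}_n\rangle$; and since $\tilde{\xi}_n$ is a non-zero-divisor by transversality (exactly as at the end of the proof of Lemma~\ref{lem:A<>toB<>iscompletefaithfullyflat}), localising at $\tilde{\xi}_n$ preserves $p$-torsion freeness. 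Hence $R_n \hookrightarrow R_n[1/p]$.

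For the rigid-analytic step, I identify $R_n[1/p] = \mathfrak{S}\langle p/\tilde{\xi}_n\rangle[1/p]$ with the affinoid Tate algebra $\O(U_n)$ of the rational subdomain $U_n = \{\,x : |p(x)| \leq |\tilde{\xi}_n(x)|\,\}$ inside the rigid open unit disk $\Spf(\mathfrak{S})^{\mathrm{rig}}$ over $K_0$. Because $\varphi^{n+1}(E)(u) = E(u^{p^{n+1}})$ has coefficients in $W$ and $|u|<1$ on the open unit disk, one has $|\varphi^{n+1}(E)|\leq 1$, so that $|\tilde{\xi}_{n+1}|\leq |\tilde{\xi}_n|$ and $U_{n+1}\subseteq U_n$. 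Geometrically, $U_n$ is the open unit disk with finitely many disjoint small open disks around the zeros of $\tilde{\xi}_n$ removed; this is a connected rigid-analytic space, so $\O(U_n)$ is an integral domain and the restriction $\O(U_n)\to\O(U_{n+1})$ is injective by the rigid-analytic identity principle.

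Combining the two parts: if $f\in R_n$ lies in the kernel of $\iota_{n,\mathfrak{S}}$, then its image in $R_{n+1}[1/p] = \O(U_{n+1})$ vanishes, hence (by injectivity of restriction) so does its image in $R_n[1/p] = \O(U_n)$, and then by $p$-torsion freeness $f=0$ in $R_n$. The main obstacle I anticipate is making the rigid-analytic connectedness of $U_n$ rigorous (since multiple Galois-conjugate zeros of $\tilde{\xi}_n$, for $n$ large, must be excised simultaneously from the unit disk) and carefully matching $R_n[1/p]$ with $\O(U_n)$; by contrast, the algebraic $p$-torsion freeness step is routine once the regular-sequence setup is in place.
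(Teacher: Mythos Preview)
Your two-step plan is a genuinely different route from the paper's proof, and Step~1 (the $p$-torsion freeness of $R_n$) is correct as you outline it: $\mathfrak{S}[X]/(\tilde\xi_nX-p)$ is a domain because $\tilde\xi_nX-p$ is a primitive linear polynomial over the UFD $\mathfrak{S}$, and flatness of the Noetherian completion plus localisation finishes the job.

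The gap is in Step~2. The locus $U_n=\{|p|\le|\tilde\xi_n|\}$ inside the open unit disk $D^\circ$ is \emph{not} an affinoid Tate subdomain: since $|\tilde\xi_n(u)|\to 1$ as $|u|\to 1^-$, the condition is satisfied on the entire outer collar of $D^\circ$, so $U_n$ is a half-open annulus (for $n\ge 2$) and in particular not quasi-compact. Consequently $R_n[1/p]$ is not an affinoid $K_0$-algebra, and it does not literally coincide with the Fr\'echet ring $\O(U_n)$; at best you have a map $R_n[1/p]\to\O(U_n)$ whose injectivity is exactly what remains to be proved. Your identity-principle argument then needs either that injection as input, or an independent proof that $R_n[1/p]$ is a domain, and neither is supplied. (You flag ``carefully matching $R_n[1/p]$ with $\O(U_n)$'' as an anticipated obstacle; this is where the real work is.)

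By contrast, the paper bypasses rigid geometry entirely. Using the Eisenstein shape of $E$ one has $\varphi^i(E)=u^{ep^i}+p\cdot(\text{unit})$, and the explicit manipulations of Lemma~\ref{lem:keylemma} (applied with $x=u^e$) show that $R_n$ and $R_{n+1}$ both embed, compatibly, into a single ring $\mathfrak{S}\langle p/u^N\rangle[1/u]$ for $N\gg 0$. Injectivity of $\iota_{n,\mathfrak{S}}$ is then immediate. Your geometric picture is the correct intuition behind this embedding (these are bounded functions on nested half-open annuli), but the paper's route makes the injectivity an elementary algebraic consequence rather than something requiring the analytic identity principle on a non-affinoid space.
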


Assume the above lemma, we will have

\begin{corollary}\label{cor:injectiveonsections}
For any $(A,I) \in X_{\Prism^\circ}$, $\iota_{n,A}$ is injective for all $n$. In particular by Lemma~\ref{lem:colimitcommutewithsections}, we will have 
$$
\O_{\mathcal{E},\Prism}^{\dagger}((A,I)) = \colim_n\O_{\mathcal{E},\Prism}^{\dagger,n}((A,I)).
$$
\end{corollary}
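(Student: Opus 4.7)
The plan is to reduce to the Breuil--Kisin case of Lemma~\ref{lem:injectiveofBK} via faithfully flat descent. Given $(A,I) \in X_{\Prism^\circ}$, I would first form the pushout $(B, EB)$ of $(A,I)$ with the Breuil--Kisin prism $(\mathfrak{S}, E)$ in $X_\Prism$. By Corollary~\ref{cor:doublecoveringcoproduct}, $(B, EB)$ is a transversal covering of $(A,I)$ for which $\mathfrak{S} \to B$ is classically faithfully flat. The sheaf property (Lemma~\ref{sheafnessofOdagger}), applied to this covering for $m = n$ and $m = n+1$, then yields injective restriction maps $\O_{\mathcal{E},\Prism}^{\dagger, m}(A,I) \hookrightarrow \O_{\mathcal{E},\Prism}^{\dagger, m}(B, EB)$; a naturality diagram for $\iota_m$ reduces the injectivity of $\iota_{n,A}$ to that of $\iota_{n,B}$.

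Next, to handle $\iota_{n,B}$: transversality of $B$ (via the argument at the end of the proof of Lemma~\ref{lem:A<>toB<>iscompletefaithfullyflat}) shows $B\langle p/E_m\rangle$ embeds into $B\langle p/E_m\rangle[1/E_m] = \O_{\mathcal{E},\Prism}^{\dagger, m}(B,EB)$, so it suffices to show the uninverted map $B\langle p/E_n\rangle \to B\langle p/E_{n+1}\rangle$ is injective. The hard part will be identifying this map as the $(p, E_{n+1})$-complete base change of $\mathfrak{S}\langle p/E_n\rangle \to \mathfrak{S}\langle p/E_{n+1}\rangle$ along the classically flat map $\mathfrak{S} \to B$, in the spirit of Lemma~\ref{lem:tensorbasechange}. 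Once the identification $B\langle p/E_m\rangle \simeq \mathfrak{S}\langle p/E_m\rangle \hat\otimes_\mathfrak{S} B$ (with $(p,E_m)$-complete tensor product) is in place, the injectivity of Lemma~\ref{lem:injectiveofBK} propagates to $B$ by flat base change, yielding injectivity of $\iota_{n,B}$, and hence of $\iota_{n,A}$.

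Finally, the ``in particular'' clause follows directly from Lemma~\ref{lem:colimitcommutewithsections}: every object in $X_{\Prism^\circ}$ is quasi-compact (coverings here are representable by single morphisms), and the transition maps $\iota_n$ on sections are injective by the preceding argument, so the colimit of sections computes the sections of the colimit. The main technical obstacle is the base-change identification above; this should be handled by a direct variant of the computation in Lemma~\ref{lem:tensorbasechange}, but the interplay between classical flatness, $(p, E_m)$-adic completion, and the Noetherianity of $\mathfrak{S}$ requires care. An alternative fallback would be to work throughout with the $E_m$-complete faithful flatness provided by Lemma~\ref{lem:A<>toB<>iscompletefaithfullyflat}, deducing injectivity at the level of $E_m$-completely flat modules rather than passing through classical tensor products.
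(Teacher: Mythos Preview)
Your overall plan matches the paper's: form the product $(B,EB)$ with the Breuil--Kisin prism and exploit classical faithful flatness of $\mathfrak{S}\to B$. Your reduction from $\iota_{n,A}$ to $\iota_{n,B}$ via the sheaf property is correct and is actually cleaner than the paper's route, which instead descends by hand: writing $R_m=A\langle p/I_m\rangle$, $S_m=B\langle p/I_m\rangle$, the paper uses $\overline{S}_m\simeq\overline{R}_m\otimes_{\overline A}\overline B$ (reduction mod $I_m$) together with $p$-complete faithful flatness of $\overline A\to\overline B$ and $p$-separatedness to kill the kernel.

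The real gap is in your treatment of $\iota_{n,B}$. You want to identify $B\langle p/E_m\rangle\simeq\mathfrak{S}\langle p/E_m\rangle\,\widehat\otimes_{\mathfrak S}\,B$ and then say ``injectivity propagates by flat base change.'' But flatness only preserves injectivity for the \emph{uncompleted} tensor product; passing to the completion can destroy it, and neither classical flatness of $\mathfrak S\to B$ nor the $E_m$-complete faithful flatness of Lemma~\ref{lem:A<>toB<>iscompletefaithfullyflat} supplies an Artin--Rees statement over the non-Noetherian ring $B$. You correctly flag this as the hard point, but neither your main line nor your fallback resolves it.

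The paper sidesteps this via Lemma~\ref{lem:keylemma}: since $B$ covers the Noetherian prism $\mathfrak S$ and $\varphi^n(E)=u^{ep^n}+p\cdot(\text{unit})$, one obtains explicit containments placing each $B\langle p/I_n\rangle$ inside some $B\langle p/u^N\rangle$. This reduces to proving injectivity along the single-parameter chain $B\langle p/u^N\rangle\to B\langle p/u^{N+1}\rangle$, which the paper handles using $u$-torsion-freeness and $u$-separatedness of $B\langle p/u^N\rangle$ (both deduced from transversality as at the end of the proof of Lemma~\ref{lem:A<>toB<>iscompletefaithfullyflat}) together with flatness of $\mathfrak S\to B$. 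So the ingredient missing from your plan is the sandwiching of Lemma~\ref{lem:keylemma}; once you have it, the question becomes one about a concrete $u$-adic family where the completion issue disappears.
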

\begin{proof}
Let $(B,IB)$ be as in Corollary~\ref{cor:doublecoveringcoproduct}, we have $\mathfrak{S}\to B$ is faithfully flat, we first show that $\iota_{n,B}$ are injective for all $n$. We can rewrite elements in $B\langle \frac{p}{I_n} \rangle$ inside $B\langle \frac{p}{u^N} \rangle$ for some $N$, cf. Lemma~\ref{lem:keylemma}. So it is enough to show $B\langle \frac{p}{u^N} \rangle \to B\langle \frac{p}{u^{N+1}} \rangle$ is injective for all $N\in \N$. But this map is injective since it modulo $u$ is injective by Lemma~\ref{lem:injectiveofBK} and the flatness of $\mathfrak{S} \to B$. Moreover, $B\langle \frac{p}{u^N}\rangle$ is $u$-separated, it is also $u$-torsion free using a similar argument as in the last part of the proof of Lemma~\ref{lem:A<>toB<>iscompletefaithfullyflat}.

To descent this to sections over $(A,I)$, let's write $S_n=B\langle \frac{p}{I_n} \rangle$ and $R_n= A\langle \frac{p}{I_n} \rangle$, and for any ring $R$ over $A$, we write $\overline{R}$ as the reduction of $R$ by $I_n$, then by the proof of Lemma~\ref{lem:A<>toB<>iscompletefaithfullyflat}, we have $\overline{S}_n=\overline{R}_n\otimes_{\overline{A}}\overline{B}$. Let $K$ be the kernel of the map $\overline{R}_n\to \overline{R}_{n+1}$, using the fact that $\overline{A} \to \overline{B}$ is $p$-complete faithfully flat, we have $K \equiv 0 \mod p$. On the other hand, we have $\overline{A}$ is classically $p$-completed, so in particular, $p$-adic separated, so $K=0$. Repeat the last argument and using the fact that $R_n$ is classically $(p,I)$-adically complete, we have $R_n \to R_{n+1}$ is also injective.
\end{proof}

\begin{remark}
As a result, we have Corollary~\ref{cor:descent} still holds when replacing $\O_{\mathcal{E},\Prism}^{\dagger,n}$ with $\O_{\mathcal{E},\Prism}^{\dagger}$ since by Corollary~\ref{cor:injectiveonsections}, any finite projective module over $\O_{\mathcal{E},\Prism}^{\dagger}(A,I)$ is defined over $\O_{\mathcal{E},\Prism}^{\dagger,n}(A,I)$ for some $n$. Moreover, again by Corollary~\ref{cor:descent}, this ``convergent radius" is universally defined over all sections in $X_{\Prism^\circ}$.
\end{remark}

\begin{lemma}\label{lem:keylemma}
For a nonzero prism $(A,I)\in X_{\Prism^\circ}$, assume that an oriented prism $(A,I)$ is a covering of a prism which is Noetherian, then there are $n_0 < n_1$ depend on $n$, such that $n_0$ goes to $\infty$ as $n$ goes to $\infty$. When $n>1$, $n_0,n_1$ satisfy 
$$
A\langle \frac{p}{{I}^{n_0}} \rangle[\frac{1}{I}] \subset \O_{\mathcal{E},\Prism}^{\dagger,n}((A,I)) \subset A\langle \frac{p}{{I}^{n_1}} \rangle[\frac{1}{I}].
$$
\end{lemma}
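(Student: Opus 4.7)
The algebraic heart of the lemma is the congruence $\tilde\xi_{n}\equiv d^{M}\pmod{p}$ with $M=p+p^{2}+\cdots+p^{n}$, which follows by iteration from the fact that $d$ is distinguished, so $\varphi(d)\equiv d^{p}\pmod{p}$. Writing $\tilde\xi_{n}=d^{M}+p\alpha$ for some $\alpha\in A$, the natural choice is $n_{0}\le M\le n_{1}$, and since $M\to\infty$ with $n$, this yields the claimed growth condition.

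\textbf{Reduction to the Noetherian case.} Let $(A',I')$ denote the Noetherian prism that $(A,I)$ covers, with common generator $d$. The covering map $A'\to A$ is $(p,I)$-completely faithfully flat, hence $(p,d^{N})$-completely faithfully flat for every $N$ by Remark~\ref{rem:adictop}. Arguments parallel to Lemma~\ref{lem:A<>toB<>iscompletefaithfullyflat} give $A\langle p/d^{N}\rangle\simeq A'\langle p/d^{N}\rangle\,\hat\otimes_{A'}A$ and similarly for $\tilde\xi_{n}$, so it suffices to prove the two inclusions for the Noetherian $(A',I')$.

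\textbf{Construction of the inclusions.} For the left inclusion with $n_{0}\le M$: since $d\mid\tilde\xi_{n}$ in $A'$, $d$ is a unit in $\O_{\E,\Prism}^{\dagger,n}((A',I'))$; the identity $d^{M}=\tilde\xi_{n}(1-(p/\tilde\xi_{n})\alpha)$ exhibits $1-(p/\tilde\xi_{n})\alpha=d^{M}/\tilde\xi_{n}$ as a ratio of units, hence a unit, from which
\[
p/d^{n_{0}}=(p/\tilde\xi_{n})\cdot d^{M-n_{0}}\cdot\bigl(1-(p/\tilde\xi_{n})\alpha\bigr)^{-1}\in\O_{\E,\Prism}^{\dagger,n}.
\]
This defines a ring homomorphism from $A'[p/d^{n_{0}}][1/d]$, which extends to the $(p,d^{n_{0}})$-adic completion $A'\langle p/d^{n_{0}}\rangle[1/d]$ because $n_{0}\le M$ ensures $\tilde\xi_{n}\in(p,d^{n_{0}})$, matching the source's completion with an appropriate completeness of the target inherited from the $(p,\tilde\xi_{n})$-adic completeness of $A'\langle p/\tilde\xi_{n}\rangle$. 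For the right inclusion with $n_{1}>M$: writing $\tilde\xi_{n}=d^{M}(1+p\alpha/d^{M})$ in $A'[1/d]$, the correction term $p\alpha/d^{M}=\alpha\cdot(p/d^{n_{1}})\cdot d^{n_{1}-M}$ vanishes modulo $p$, so $\tilde\xi_{n}\equiv d^{M}\pmod{p}$ is a unit in the reduction; Hensel's lemma for units (applied to the $p$-adic completion of $A'\langle p/d^{n_{1}}\rangle[1/d]$, or equivalently a geometric-series expansion of $(1+p\alpha/d^{M})^{-1}$) shows $\tilde\xi_{n}$ itself is a unit. This produces $p/\tilde\xi_{n}$ as an element, yielding a ring map $\O_{\E,\Prism}^{\dagger,n}\to A'\langle p/d^{n_{1}}\rangle[1/d]$.

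\textbf{Main obstacle.} The principal technical difficulty will be the topological bookkeeping: after inverting $d$ or $\tilde\xi_{n}$, the natural adic topologies degenerate, so each ring map must be shown to extend to the correct completion. I would handle this by first working modulo $p$, where both rings become concrete Laurent-type algebras in the residue characteristic and the key identification $\tilde\xi_{n}\equiv d^{M}$ is visible directly, and then lifting $p$-adically via Hensel's lemma for units. Noetherianity of $A'$ is essential here, since it guarantees the faithful flatness of the relevant completions and an Artin--Rees-type control of the ideal filtrations needed to compare the $(p,d^{n_{0}})$-adic and $(p,\tilde\xi_{n})$-adic structures. This realizes the paper's stated ``Noetherian approximation'' philosophy through the Breuil--Kisin prism.
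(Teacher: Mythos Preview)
Your overall strategy matches the paper's: both exploit $\tilde{\xi}_n \equiv d^{M} \pmod{p}$ with $M=\sum_{i=1}^{n} p^{i}$ and pass between the two completions via geometric-series manipulations. But your argument for the left inclusion contains a genuine error. You assert that $d \mid \tilde{\xi}_n$ in $A'$, so that $d$ becomes a unit once $\tilde{\xi}_n$ is inverted. This is false: since $\varphi(d) = d^{p} + p\,\delta(d)$ with $\delta(d)$ a unit, reduction modulo $d$ gives $\varphi(d) \equiv p\,\delta(d) \not\equiv 0$ in the $p$-torsion-free ring $A/I$, and iterating shows $\tilde{\xi}_n$ is congruent to $p^{n}$ times a unit modulo $d$. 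Thus $d \nmid \tilde{\xi}_n$, your ``ratio of units'' step collapses, and you have given no reason why $1-(p/\tilde{\xi}_n)\alpha$ is a unit in $\O_{\mathcal{E},\Prism}^{\dagger,n}$.

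The paper avoids this by retaining the factored form $\varphi^{i}(d) = d^{p^{i}} + p\,u_i$ and treating each factor separately: writing $d^{p^{i}} = \varphi^{i}(d)\bigl(1 - u_i\,p/\varphi^{i}(d)\bigr)$, the inverse $\sum_{m\ge 0} u_i^{m}\bigl(p/\varphi^{i}(d)\bigr)^{m}$ lies in $A\langle p/\tilde{\xi}_n\rangle$ because $p/\varphi^{i}(d) = (p/\tilde{\xi}_n)\prod_{k\ne i}\varphi^{k}(d)$ and the coefficient $\prod_{k\ne i}\varphi^{k}(d)$ lies in $(p,d)$, so its powers tend to zero. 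This simultaneously exhibits $p/d^{n_0}$ in $A\langle p/\tilde{\xi}_n\rangle$ and shows $d^{M}$ equals $\tilde{\xi}_n$ times a unit there, whence $d$ is invertible after inverting $\tilde{\xi}_n$. Your lumped form $\tilde{\xi}_n = d^{M} + p\alpha$ can be salvaged along the same lines, but only after verifying $\alpha \in (p,d)$ for $n>1$ (expand the product: every cross-term carries a surviving factor of $p$ or of some $d^{p^{i}}$), so that $\sum_{m}\alpha^{m}(p/\tilde{\xi}_n)^{m}$ actually defines an element. Finally, your ``reduction to the Noetherian case'' via completed base change is not how the paper uses the hypothesis: the paper works directly in $A$ and invokes the Noetherian cover $A_0\to A$ only to guarantee that $A[p/d]$ is $d$-adically separated, so that elements of $A\langle p/d\rangle$ admit the Cauchy-series description $\sum_i a_i(p/d)^{i}$ on which the entire computation rests.
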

\begin{proof}
We may assume $I=(d)$. Then by assumption, we have there is ($(p,I)$-complete) faithfully flat map $A_0 \to A$ with $A_0$ being Noetherian. Then we have $A[p/d]$ is $d$-adically separated since $A_0[p/I]$ is. So in particular, we can view $A\langle p/d \rangle$ as the closure of $A[p/d]$ for the $d$-adic topology, and elements inside $A\langle p/d \rangle$ can be represented by Cauchy series $\sum_{i\geq 0}a_i(p/d)^i$ with $a_i$ goes to $0$ $d$-adically. Here the last part is from the fact that if $\{x_n\}_{n\geq 0} \subset A[p/d]$ converges $d$-adically to some element that is not in $A[p/d]$, we can always assume that $x_n$ is unbounded for the $d$-adic topology inside $A[1/d]$.

We will prove a slightly general formula that will be used in our later content. Assume we can find some $x \in (p,d)$ such that 
$$
\varphi^{n}(d)=x^{p^n}+pu_n
$$
with $u_n$ being invertible for all $n\geq 1$. When $n=1$, the above equation implies that $(x,p)$ and $(d,p)$ define the same topology. For the statement of this lemma, one can check $x=d$ fulfill this requirement. 

To prove this result, we have
$$
\frac{p}{\prod_{i=1}^n (x^{p^i}+pu_i)}=\frac{p}{\prod_{i=1}^n x^{p^i}}\cdot\frac{1}{\prod_{i=1}^n(1+u_i\frac{p}{x^{p^i}})},
$$
And if we let $n_1= \sum_{i=1}^n p^i+n$, we will have 
$$
A\langle \frac{p}{\prod_{i=1}^n (x^{p^i}+pu_i)} \rangle \subset A\langle \frac{p}{{x}^{n_1}}\rangle.
$$

On the other hand, if we let $n_0 = \sum_{i=1}^n p^i$, we can show
$$
\frac{p}{x^{n_0}}=\frac{p}{\prod_{i=1}^n(x^{p^i}+pu_i-pu_i)}=\frac{p}{\prod_{i=1}^n(x^{p^i}+pu_i)}\frac{1}{\prod_{i=1}^n(1-u_i\frac{p}{x^{p^i}+pu_i})}.
$$
For each $i$ between $1$ and $n$, we have
$$
\frac{1}{(1-u_i\frac{p}{x^{p^i}+pu_i})}=\sum_{m=0}^\infty u_i^m\big(\frac{p}{x^{p^i}+pu_i}\big)^m = \sum_{m=0}^\infty u_i^m\big(\frac{p\prod_{0\leq k \leq n, k\neq i}  (x^{p^k}+pu_k)}{\prod_{k=0}^n( x^{p^k}+pu_k)}\big)^m
$$
which is inside $A\langle \frac{p}{\prod_{i=1}^n (x^{p^i}+pu_i)}\rangle$. So this implies
$$
A\langle \frac{p}{{x}^{n_0}}\rangle[\frac{1}{x}] \subset A\langle \frac{p}{\prod_{i=1}^n (x^{p^i}+pu_i)}\rangle[\frac{1}{\prod_{i=1}^n (x^{p^i}+pu_i)}].
$$
\end{proof}

\begin{proof}[{Proof of Lemma~\ref{lem:injectiveofBK}}]
In the proof of Lemma~\ref{lem:keylemma}, if we let $(A,I)=(\mathfrak{S},E)$, and we have 
$$
\varphi^n(E)=x^{p^n}+pu_n
$$
with $x=u^e$. So we have for all $n\geq 1$, we can regards $\O_{\mathcal{E},\Prism}^{\dagger,n}((\mathfrak{S},E))$ and $\O_{\mathcal{E},\Prism}^{\dagger,n+1}((\mathfrak{S},E))$ as subset of $\mathfrak{S}\langle \frac{p}{u^{N}} \rangle[\frac{1}{u}]$ for some $N\gg 0$. Then it is not hard to see $\O_{\mathcal{E},\Prism}^{\dagger,n}((\mathfrak{S},E))\subset\O_{\mathcal{E},\Prism}^{\dagger,n+1}((\mathfrak{S},E))$.
\end{proof} 

With the notions in the proof of Lemma~\ref{lem:keylemma}, if we write 
$$
\O_{\mathcal{E}}\coloneqq\O_{\mathcal{E},\Prism}((\mathfrak{S},E))= \left \{ \, \sum_{i\in\Z} \, a_i u^i\,\mid\, a_i \in W, \, \lim_{i\to -\infty} a_i=0\,\right \},
$$
then we will have $\O_\E^{\dagger}\coloneqq\O_{\mathcal{E},\Prism}^{\dagger,n}((\mathfrak{S},E)) \subset \O_{\mathcal{E}}$. We have the following generalizations of the above inclusion. For $k \in \N$, let $(\mathfrak{S}^{k},E)$ be the $k+1$-th self-product of $(\mathfrak{S},E)$ in $X_\Prism$, then we can have the natural projections $p_j: \mathfrak{S} \to \mathfrak{S}^{k}$ are all classically faithfully flat, cf., \cite[Proposition 2.2.7]{Du-Liu-phiGhatmodules} and \cite[Lemma 3.5]{DLMS-smoothcase}. Let $x \in \O_{\E, \Prism}^{\dagger} (\mathfrak S ^k,E)$, then by Lemma~\ref{lem:keylemma}, there is $N \gg 0$ and we can write $E^Nx=x^+$ with $x^+= \sum\limits_{i = 0}^\infty a_i (\frac{p}{E^n})^i$ for some $n\in \N$ with $\{a_i\}$ converges $(p, E)$-adically to $0$ in $\mathfrak S ^k$. Since $E ^{ns} (\frac{p}{ E ^n}) ^i = p ^{s} (\frac{p}{E^n})^{i -s}$, we may rewrite the terms of $x^+$ so that $a_i \to 0$ $p$-adically in $\mathfrak S^1$. Rewriting $x$ in this way induces a map $c_k\colon\O_{\E, \Prism}^{\dagger} (\mathfrak S ^k,E) \to \O_{\E, \Prism} (\mathfrak S ^k,E)$.

One could check, as we mentioned at the beginning of this subsection, that if $R$ is $p$-complete \textit{small} $\O_K$-algebra in the sense of \cite[Definition 2.1]{DLMS-smoothcase}, we could also define the (relative) Breuil--Kisin prism $(\mathfrak{S},E)$ as in Example 3.4 of $loc.$ $cit.$, and we have the same results carried out for the transversal prismatic site over $X=\Spf(R)$ and $(\mathfrak{S}, E)$. In particular, the maps $c_k$ are well-defined.

Now an object in $\mathrm{Vect}(X_{\Prism},\O_{\mathcal{E},\Prism})^{\varphi=1}$ (resp. $\mathrm{Vect}(X_{\Prism^\circ},\O_{\mathcal{E},\Prism}^{\dagger})^{\varphi=1}$) is equivalent to a pair $(\mathcal{M},f)$ with $\mathcal{M}$ an \'etale $\varphi$-module over $\O_\E$ (resp. $\O_{\mathcal{E}}^{\dagger}$) together with an $\varphi$-equivariant descent isomorphism over $\O_{\mathcal{E},\Prism}((\mathfrak{S}^1,E))$ (resp. $\O_{\mathcal{E},\Prism}^{\dagger,n}((\mathfrak{S}^1,E))$). Then base change along $c_k$ defines a functor:
\begin{equation}\label{eq:main}
D_X: \mathrm{Vect}(X_{\Prism^\circ},\O_{\mathcal{E},\Prism}^{\dagger})^{\varphi=1} \to \mathrm{Vect}(X_{\Prism},\O_{\mathcal{E},\Prism})^{\varphi=1}.
\end{equation}
And we can restrict this functor over the perfect site, and we get
\begin{equation}\label{eq:mainperf}
D_X^{\perf}: \mathrm{Vect}(X_{\Prism^\circ}^{\perf},\O_{\mathcal{E},\Prism}^{\dagger})^{\varphi=1} \to \mathrm{Vect}(X_\Prism^{\perf},\O_{\mathcal{E},\Prism})^{\varphi=1}.
\end{equation}

We made the following conjecture:
\begin{conjecture}\label{conj:main}
If $R$ is $p$-adically complete \textit{small} $\O_K$-algebra, and let $X=\Spf(R)$, then $D_X$ (resp. $D_X^{\perf}$) defines an equivalence of categories.
\end{conjecture}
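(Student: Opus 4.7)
The plan is to use the Breuil--Kisin prism $(\mathfrak{S},E)$ as a Noetherian cover of the final object in $X_{\Prism^\circ}$ and to rephrase the conjecture in terms of descent data. By the discussion at the end of \S\ref{sec:BKprism}, an object of $\mathrm{Vect}(X_{\Prism},\O_{\mathcal{E},\Prism})^{\varphi=1}$ (resp.\ $\mathrm{Vect}(X_{\Prism^\circ},\O_{\mathcal{E},\Prism}^{\dagger})^{\varphi=1}$) is the same datum as a pair $(\mathcal{M},f)$, with $\mathcal{M}$ an \'etale $\varphi$-module over $\O_\E$ (resp.\ over $\O_\E^\dagger$) and $f$ a $\varphi$-equivariant descent isomorphism over $\O_{\mathcal{E},\Prism}((\mathfrak{S}^1,E))$ (resp.\ over $\O_{\mathcal{E},\Prism}^{\dagger,n}((\mathfrak{S}^1,E))$ for some $n$). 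The conjecture then splits into two claims: every \'etale descent isomorphism is induced by an overconvergent one, and every morphism of \'etale $\varphi$-modules respecting the \'etale descent data already respects overconvergent data.

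For essential surjectivity, which I expect to be the hard direction, I would start from a Laurent $F$-crystal, pass to the associated $\Z_p$-lattice Galois representation $T$ via Bhatt--Scholze, and produce an overconvergent \'etale $\varphi$-module over $\O_\E^\dagger$ using the classical Cherbonnier--Colmez theorem (or its Kummer analogue from \cite{Gao-Liu-Loosecrystallinelifts}, \cite{Gao-Poyeton}). The key observation is that the self-product prism $(\mathfrak{S}^1,E)$ geometrically encodes the $\tau$-action from Caruso's $(\varphi,\tau)$-module theory, so the required descent datum should be essentially the $\tau$-action, and the overconvergence of the descent amounts to an overconvergence estimate on $\tau$ inside $\mathfrak{S}^1$. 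Using Lemma~\ref{lem:keylemma} to embed $\O_{\mathcal{E},\Prism}^{\dagger,n}((\mathfrak{S}^1,E))$ into a ring of the form $\mathfrak{S}^1\langle p/u^N\rangle[1/u]$, the problem reduces to a bound on the $u$-adic growth of the matrix entries of $\tau$.

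The main obstacle is the ``deperfection'' of the $\tau$-action. In \cite{Gao-Liu-Loosecrystallinelifts} the overconvergence of $\tau$ is proved only over a perfect overconvergent period ring, since in the classical $(\varphi,\hat G)$-module theory $\tau$ is visible only after a Frobenius twist. The plan is to combine the crystalline-lifting and convergence-radius estimates of \textit{loc.\ cit.}\ with the prismatic $(\varphi,\hat G)$-module framework of \cite{Du-Liu-phiGhatmodules}, in which $\tau$ acts on $\mathfrak{S}^1$ directly without any Frobenius twist. Faithful flatness of $\mathfrak{S}\to\mathfrak{S}^1$ (Corollary~\ref{cor:doublecoveringcoproduct}) and the injectivity of $\iota_n$ on sections (Corollary~\ref{cor:injectiveonsections}) then allow one to check the cocycle condition at the overconvergent level and to show that the resulting overconvergent descent datum is independent of all choices. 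Fully faithfulness should follow from the essential-surjectivity argument applied to internal Homs.

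For the small relative case beyond $X=\Spf(\O_K)$, the additional obstacle is that the overconvergent $\tau$-estimates are currently available only in the absolute setting. I would try to replace them by a relative estimate using the relative Breuil--Kisin prism of \cite{DLMS-smoothcase} together with a relative prismatic $(\varphi,\hat G)$-theory, and use the framing map on $R$ to reduce fibrewise to the absolute case. A uniform bound on the convergence radius would at the same time yield the $\Phi$-iterate tower result foreshadowed in \S\ref{sec:phiiterate}.
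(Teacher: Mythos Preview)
The first thing to note is that this statement is a \emph{Conjecture} in the paper, not a theorem: the paper does not prove it for general small $R$. What the paper does prove is the special case $X=\Spf(\O_K)$ (Theorem~\ref{thm:mainwithproof}) and, as a corollary of the perfect discussion in \S\ref{sec:perfectdecomp}, the perfect analogue in that case. Your proposal should therefore be read as (i) a plan for the special case, which can be compared to the paper's actual proof, and (ii) a speculative extension to the relative case, for which there is nothing in the paper to compare against.

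For (i), your strategy for essential surjectivity is essentially the paper's: reduce via the Breuil--Kisin cover to an \'etale $\varphi$-module over $\O_\E^\dagger$ plus an overconvergent descent datum on $(\mathfrak{S}^1,E)$, identify the descent datum with the $\tau$-action via \cite{Du-Liu-phiGhatmodules}, and import the radius estimates from \cite{Gao-Liu-Loosecrystallinelifts} to conclude that $\tau$ acts by a matrix in $M_d(\O_{\mathcal{E},\Prism}^{\dagger,n}((\mathfrak{S}^1,E)))$ (this is exactly Corollary~\ref{cor:tauaction}). Your fully-faithfulness sketch, however, differs from the paper. The paper does \emph{not} argue via internal Homs; instead it proves directly that the comparison maps $c_k\colon \O_{\mathcal{E},\Prism}^{\dagger}((\mathfrak{S}^k,E))\to \O_{\mathcal{E},\Prism}((\mathfrak{S}^k,E))$ are injective for $k=1,2$ (so faithfulness of the descent data is automatic) and invokes the classical fully faithfulness of \'etale $\varphi$-modules over $\O_\E^\dagger\hookrightarrow \O_\E$ for $k=0$. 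Your internal-Hom route is plausible but, as stated, circular: ``essential surjectivity applied to internal Homs'' only tells you that the completed internal Hom comes from \emph{some} overconvergent crystal, not that the map on $\varphi$-invariants is a bijection; you still need an injectivity input like the paper's $c_k$-argument or the classical $\O_\E^\dagger\hookrightarrow \O_\E$ statement.

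For (ii), your suggestion to run a relative prismatic $(\varphi,\hat G)$-theory over the relative Breuil--Kisin prism of \cite{DLMS-smoothcase} and reduce fibrewise is a reasonable heuristic, but it is precisely the missing ingredient: there is no relative analogue of the loose-crystalline-lifting estimates of \cite{Gao-Liu-Loosecrystallinelifts} in the literature, and a fibrewise bound does not obviously assemble into a global one because the convergence radius $n$ could a priori vary with the fibre. The paper flags this case as open and proves nothing beyond $\Spf(\O_K)$, so this part of your proposal should be regarded as a program rather than a proof.
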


\begin{remark}
One could define sheaves $\O_{\E,qrsp}\coloneqq \Prism_{\bullet}[1/I]^\wedge_p$ and $\O_{\E,qrsp}^\dagger\coloneqq \colim_n\Prism_\bullet\langle \frac{p}{I_n}\rangle[\frac{1}{I_n}]$  over $X_{qrsp}$ and $X_{qrsp}^\circ$ respectively similar as in \cite[\S6]{BS2021Fcrystals}. Here $X_{qrsp}$ is defined in Definition 2.9 of $loc.$ $cit.$ and $X_{qrsp^\circ}$ is the full subcategory of $X_{qrsp}$ consisting of quasi-regular semiperfectoid rings over $X$ that are $p$-torsion free. One could also define $\mathrm{Vect}(X_{qrsp^\circ},\O_{\mathcal{E},qrsp}^{\dagger})^{\varphi=1}$ similar to Definition~\ref{def:overconvergentFcrystal}, and the comparison result in \cite[
Proposition 2.1]{BS2021Fcrystals} gives an equivalence 
$$
\mathrm{Vect}(X_{qrsp^\circ},\O_{\mathcal{E},qrsp}^{\dagger})^{\varphi=1} \simeq \mathrm{Vect}(X_{\Prism^\circ},\O_{\mathcal{E},\Prism}^{\dagger})^{\varphi=1}.
$$
In particular Conjecture~\ref{conj:main} has a version over the quasi-syntomic sites. We do not pursue this direction here.
\end{remark}

\begin{remark}\label{rem:comparewithclassical}
From Lemma~\ref{lem:keylemma}, when $X=\Spf(\O_K)$, we have $\O_\E^{\dagger}$ defined as above is equal to the overconvergent period ring used in the theory of $(\varphi,\tau)$-modules. Beyond the Kummer tower case, one could compare $\O_{\mathcal{E},\Prism}^{\dagger,n}((A,d))$ with the period rings used in the work of Kedlaya-Liu in \cite[\S5]{KedlayaLiu-relativeII} or \cite{Kedlaya2013Rational} defined using $\theta$-maps.
\end{remark}

\section{Overconvergence of Laurent $F$-crystals}
In this section, we will assume $X$ to be $\Spf(\O_K)$. But some results, especially those in \S\ref{sec:perfectdecomp} carries over more general bases.
\subsection{Perfect overconvergent theory}\label{sec:perfectdecomp}
We consider the transversal perfect prismatic site $X_{\Prism^\circ}^{\perf}$ over $X$, i.e., objects in $X_{\Prism^\circ}^{\perf}$ are perfect prisms $(A,I)$ over $\O_K$ such that $A/I$ is $\O_K$-flat. Let $(\Ainf,\xi)$ be the Fontaine prism over $\O_K$.

\begin{lemma}
The sheaf represented by $(\Ainf,\xi)$ covers the final object in $X_{\Prism^\circ}^{\perf}$.
\end{lemma}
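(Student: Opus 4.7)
The strategy mirrors the Breuil--Kisin case proved earlier: for any $(A, I) \in X_{\Prism^\circ}^{\perf}$, I would exhibit an object of $X_{\Prism^\circ}^{\perf}$ that covers $(A,I)$ and simultaneously receives a map from $(\Ainf, \xi)$. Via the Bhatt--Scholze equivalence between perfect prisms and perfectoid rings, $(A,I)$ corresponds to a $p$-torsion free perfectoid $\O_K$-algebra $R = A/I$, while $(\Ainf, \xi)$ corresponds to $\O_C$. The natural candidate is $(B,J) := (\Ainf(S),\ker\theta_S)$, where
\[
S := \bigl(R \,\widehat{\otimes}_{\O_K}\, \O_C\bigr)^{\wedge}_p
\]
is the classical $p$-adic completion of the $\O_K$-linear tensor product, equipped with the two obvious maps $R \to S$ and $\O_C \to S$.

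Two properties then need verification. First, $S$ should be $p$-torsion free: since $p = \varpi^e u$ in $\O_K$ with $u \in \O_K^\times$, $R$ being $p$-torsion free forces $R$ to be $\varpi$-torsion free, hence flat over the DVR $\O_K$; base change to $\O_C$ preserves $p$-torsion freeness, and so does the $p$-completion. Second, $\O_K \to \O_C$ is faithfully flat (since $\O_C$ is the $p$-completion of $\varinjlim_L \O_L$, a filtered colimit of finite free $\O_K$-algebras over finite extensions $L/K$ inside $\overline{K}$), so $R \to S$ is $p$-completely faithfully flat; passing through the prismatic equivalence and applying Proposition~\ref{prop:transversal} gives that $(A,I) \to (B,J)$ is a covering in $X_{\Prism^\circ}^{\perf}$, while the map $\O_C \to S$ supplies the desired factorization through $(\Ainf, \xi)$.

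The main obstacle is proving $S$ is perfectoid, which does not follow directly from standard base-change results because $\O_K$ itself is not perfectoid. My plan is to realize $S$ as $(\varinjlim_L R \otimes_{\O_K} \O_L)^{\wedge}_p$ and to verify Frobenius surjectivity on $S/p$ by hand: any element there is represented by something in $R \otimes_{\O_K} \O_L$ for some finite $L/K$; after enlarging $L$ inside $\overline{K}$ to contain $p$-th roots of its (finitely many) generators over $K$, and using that Frobenius is surjective on $R/p$ (as $R$ is perfectoid), one extracts the required pre-image in a further finite extension. Combined with $p$-adic completeness and $p$-torsion freeness, this yields the perfectoid structure on $S$. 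Alternatively, one could quote \cite[Lemma 4.1]{BS19} (restricted to the perfect setting) to produce the covering abstractly and only verify the structural properties of $(B,J)$, but the explicit construction above is more transparent and will be useful for later sections.
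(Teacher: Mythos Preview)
Your argument is correct, but it is considerably more explicit than the paper's. The paper dispatches the lemma in two lines: it cites \cite[Propositions~7.10 and~7.11]{BS19} for the (well-known) fact that $(\Ainf,\xi)$ already covers the final object in the full perfect site $X_\Prism^{\perf}$, and then invokes Proposition~\ref{prop:transversal} to pass to the transversal subsite. What you have written is essentially an unpacking of the cited black box: the completed tensor product $S=(R\,\widehat{\otimes}_{\O_K}\,\O_C)^\wedge_p$ is exactly the perfectoid ring underlying the coproduct of $(A,I)$ and $(\Ainf,\xi)$ in the perfect prismatic site, and your verification that $S$ is $p$-torsion free, perfectoid, and $p$-completely faithfully flat over $R$ is the content behind the citation. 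Your Frobenius-surjectivity sketch for $S/p$ is fine (the missing ingredients---$p$-completeness, $p$-torsion freeness, and the existence of a $p$-power root of $p$ inherited from $\O_C$---are all present), so the perfectoidness claim goes through. The one place you could tighten is the passage ``passing through the prismatic equivalence'': you should say explicitly that $p$-complete faithful flatness of $R\to S$ implies $(p,I)$-complete faithful flatness of $\Ainf(R)\to\Ainf(S)$, which is standard but not literally Proposition~\ref{prop:transversal}. In short, your route is self-contained and instructive; the paper's is a two-line citation.
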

\begin{proof}
It is a well-known fact that the sheaf represented by $(\Ainf,\xi)$ covers the final object in $X_{\Prism}^{\perf}$, which can be directly deduced from \cite[Proposition 7.10 and Proposition 7.11]{BS19}. Then the result follows from Proposition~\ref{prop:transversal}.
\end{proof}

It is known that the category of perfect prisms over $X$ is equivalent to the category of integral perfectoid rings over $\O_K$, so it admits finite non-empty coproducts. We will denote $((\Ainf^\bullet)_{\perf},\xi)$ be the \v{C}ech nerve of $(\Ainf,\xi)$ in $X_\Prism$. One can identify $\Ainf^i$ with $\Prism_{R^i}$ where $R^i$ is the $i$-th complete self-tensor product of $\O_C$ over $\O_K$, and $(\Ainf^i)_{\perf}$ with $(\Prism_{R^i})_{\perf}$, then one can apply \cite[Proposition 7.10 and Proposition 7.11]{BS19} to see $((\Ainf^i)_{\perf},\xi)$ are transversal for $i>0$. For any topological ring, let $C^0(G_K^i, R)$ be the ring of continuous functions from $G_K^i$ to $R$. Let $\tilde{\xi}_n=\prod_{i=1}^n\varphi^i(\xi)$.

\begin{lemma}\label{lem:isomAinfi}
\begin{enumerate}
    \item $(\Ainf^i)_{\perf}$ is almost isomorphic to $C^0(G_K^i,\Ainf)$ for $i>0$;
    \item $(\Ainf^i)_{\perf}[1/\xi]^\wedge_p$ is isomorphic to $C^0(G_K^i,W(C^\flat))$;
    \item $(\Ainf^i)_{\perf}\langle \frac{p}{\tilde{\xi}_n}\rangle[\frac{1}{\tilde{\xi}_n}]$ is isomorphic to $C^0(G_K^i,\Ainf\langle \frac{p}{\tilde{\xi}_n}\rangle[\frac{1}{\tilde{\xi}_n}])$.
\end{enumerate}
\end{lemma}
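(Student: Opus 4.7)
\medskip
\noindent
The plan is to deduce all three parts from a single almost isomorphism at the level of integral perfectoid rings. By the Bhatt--Scholze equivalence between perfect prisms and integral perfectoid rings, together with the identification $(\Ainf^i)_{\perf} = \Prism_{R^i}$ for $R^i = \O_C^{\hat\otimes(i+1)}_{\O_K}$ already used in the excerpt, the key input we need is the classical Faltings-style almost isomorphism
\[
R^i \;\simeq^{\mathrm{al}}\; C^0(G_K^i, \O_C), \qquad a_0 \otimes \cdots \otimes a_i \;\longmapsto\; \bigl((g_1, \ldots, g_i) \mapsto a_0\, g_1(a_1) \cdots g_i(a_i)\bigr).
\]
The standard route to this is to tilt to characteristic $p$, where the completed tensor product becomes a (perfect) completed tensor of copies of $\O_{C^\flat}$, and to apply almost faithfully flat Galois descent (Faltings' almost purity); one then untilts by $W((-)^\flat)$.

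\medskip
\noindent
Applying $\Ainf$ to both sides and using that (a) tilting commutes with $C^0(G_K^i, -)$ on integral perfectoid rings, since $C^0(G_K^i, \O_C)$ is a cofiltered limit of finite products of $\O_C$ indexed by the finite quotients of the profinite group $G_K^i$, and both reduction mod $p$ and tilting commute with finite products and cofiltered limits, and (b) the Witt-vector functor enjoys the same commutation, we obtain $(\Ainf^i)_{\perf} \simeq^{\mathrm{al}} C^0(G_K^i, \Ainf)$, which is (1). For (2), apply $[1/\xi]^\wedge_p$ to both sides: on the right $\Ainf[1/\xi]^\wedge_p = W(C^\flat)$, and $C^0(G_K^i, -)$ commutes with this operation (a filtered colimit in $\xi^{-n}$ followed by a sequential $p$-adic limit, both commuting with continuous functions on the profinite space $G_K^i$). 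The almost-zero ideal becomes the unit ideal after this localization, because it is generated by pseudo-uniformizers of $\O_{C^\flat}$, which become units in $W(C^\flat)$ since $C^\flat$ is a field; thus the almost isomorphism upgrades to an honest one.

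\medskip
\noindent
Part (3) follows by the same mechanism with $\tilde{\xi}_n$ in place of $\xi$, using that $\xi$ divides $\tilde{\xi}_n$ so that inverting $\tilde{\xi}_n$ again kills the almost-zero ideal. I expect the main technical obstacle to be verifying that the Tate-algebra-type construction $A \mapsto A\langle p/\tilde{\xi}_n \rangle [1/\tilde{\xi}_n]$ commutes with $C^0(G_K^i, -)$, since the $(p, \tilde{\xi}_n)$-adic completion of $A[p/\tilde{\xi}_n]$ is more delicate than the two completions appearing in (2). The plan is to reduce the claim modulo $(p^m, \tilde{\xi}_n^m)$: at each finite level the target is discrete, so $C^0(G_K^i, -)$ factors through a finite quotient of $G_K^i$ and becomes a finite product, over which the commutation is formal; passing to the $(p,\tilde{\xi}_n)$-adic limit and then inverting $\tilde{\xi}_n$ then gives the full statement.
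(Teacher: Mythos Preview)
Your strategy is essentially the paper's: establish (1) and then deduce (2) and (3) by checking that $C^0(G_K^i,-)$ commutes with the relevant completions and localizations (the paper phrases this as ``compactness of $G_K$'') and that the almost defect disappears. For (1) the paper cites the theory of diamonds, which packages precisely the Faltings-type almost-purity computation you spell out by hand.

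There is one concrete slip. In your argument for (3) you assert that $\xi$ divides $\tilde\xi_n$; this is false, since $\tilde\xi_n=\prod_{i=1}^n\varphi^i(\xi)$ (the product begins at $i=1$) and $\xi\nmid\varphi^i(\xi)$ in $\Ainf$ for any $i\geq 1$: for instance with $\xi=[\underline{p}^\flat]-p$ one has $\theta(\varphi(\xi))=p^p-p\neq 0$ in $\O_C$. The conclusion you actually need, that Teichm\"uller lifts $[a]$ of nonzero $a\in\mathfrak m_{C^\flat}$ become units in $\Ainf\langle p/\tilde\xi_n\rangle[1/\tilde\xi_n]$, is nevertheless true. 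Writing $\varphi^i(\xi)=[\underline{p}^\flat]^{p^i}-p$, the computation in the proof of Lemma~\ref{lem:keylemma} (taking $x=[\underline{p}^\flat]$, as is also done in the proof of Theorem~\ref{thm:perfectthm}) yields an inclusion $\Ainf\langle p/[\underline{p}^\flat]^{n_0}\rangle[1/[\underline{p}^\flat]]\subset\O_{\E,\Prism}^{\dagger,n}((\Ainf,\xi))$ for suitable $n_0$, so $[\underline{p}^\flat]$, and hence every nonzero $[a]$, is already invertible there. With this correction your argument goes through.
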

\begin{proof}
We will have $C^0(G_K^i,\Ainf\langle \frac{p}{\tilde{\xi}_n}\rangle[\frac{1}{\tilde{\xi}_n}])\simeq C^0(G_K^i,\Ainf)\langle \frac{p}{\tilde{\xi}_n}\rangle[\frac{1}{\tilde{\xi}_n}]$ by the compactness of $G_K$, and one can check $(1)$ implies $(2)$ and $(3)$. We have $(1)$ follows from the theory of diamond and \cite[Example 3.22]{Scholze-Diamonds}. For the arguments using diamonds, one can refer to \cite[Lemma 5.3]{wu2021galois} or \cite[Lemma 2.13]{Min-Wang_relativephiGammamodules}.
\end{proof}

We will also denote $\Ainf$ by $\tilde{\mathbf{A}}^+$, and we let $\tbfA=W(\overline{K}^\flat)$. For $r>0$ let 
$$
\tbfA^{\dagger,r} = \{\sum_{n\geq 0}[a_n]p^n | a_n \in C^\flat \, ,\, a_n(p^{n/r}) \to 0  \},
$$
and $\tbfA^{\dagger}=\cup_{r>0} \tbfA^{\dagger,r}$. We have the following result.

\begin{theorem}\label{thm:perfectthm}
Base extension of \'etale $(\varphi,\Gamma)$-modules from $\tbfA^{\dagger}$ to $\tbfA$ is an equivalence of categories, and both categories are equivalent to the category of continuous representations of $G_K$ on finite free $\Z_p$-modules.
\end{theorem}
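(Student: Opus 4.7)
My plan is to split the theorem into two independent equivalences and address them in turn. For the classical equivalence between étale $(\varphi, G_K)$-modules over $\tbfA = W(C^\flat)$ and continuous $\Z_p$-representations of $G_K$, I would construct the quasi-inverse functors $T \mapsto D(T) := \tbfA \otimes_{\Z_p} T$ (with Frobenius on the left factor and diagonal $G_K$-action) and $D \mapsto T(D) := D^{\varphi=1}$. Quasi-inverseness rests on two standard facts: first, $\tbfA^{\varphi=1} = \Z_p$, so applying $\varphi=1$ to $D(T)$ recovers $T$; second, $\tbfA/p = C^\flat$ is perfect and algebraically closed, so any étale $\varphi$-module over $C^\flat$ admits a $\varphi$-fixed basis, which lifts uniquely to $\tbfA$ by $p$-adic completeness (a Hensel-style argument). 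Recording the $G_K$-action on the resulting $\Z_p$-lattice gives the target Galois representation.

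For the overconvergent side, I would define analogous functors $T \mapsto D^\dagger(T) := \tbfA^\dagger \otimes_{\Z_p} T$ and $M \mapsto M^{\varphi=1}$. The functor $D^\dagger$ is well-defined because $\tbfA^\dagger$ is stable under both $\varphi$ (immediate from the definition of $\tbfA^{\dagger,r}$: $\varphi$ takes $\tbfA^{\dagger,r}$ into $\tbfA^{\dagger,pr} \subset \tbfA^\dagger$) and $G_K$ (since $G_K$ preserves the valuation on $C^\flat$). Compatibility with the base-change functor $M \mapsto M \otimes_{\tbfA^\dagger} \tbfA$ is then essentially formal, because both equivalences produce the same $\Z_p$-lattice $M^{\varphi=1}$ — any $\varphi$-fixed vector in $M \otimes \tbfA$ has coefficients in $\tbfA^{\varphi=1} = \Z_p \subset \tbfA^\dagger$ when expressed in a suitable $\tbfA^\dagger$-basis. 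Fully faithfulness of the base change reduces to the equality of internal-Hom $\varphi$-invariants over $\tbfA^\dagger$ and $\tbfA$, which follows from the same reasoning.

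The main obstacle is the essential surjectivity on the overconvergent side: given an étale $(\varphi, G_K)$-module $M$ over $\tbfA^\dagger$, we must show the natural map $\tbfA^\dagger \otimes_{\Z_p} M^{\varphi=1} \to M$ is an isomorphism. Base-changing to $\tbfA$ reduces this to the analog already established in paragraph one, but passing back down is the delicate step. Concretely, the core technical point is the following descent lemma: if $B \in GL_n(\tbfA^\dagger)$ and $a \in \tbfA^n$ satisfies the Frobenius equation $a = B\varphi(a)$, then $a \in (\tbfA^\dagger)^n$. The plan is to iterate this equation, writing $a = \prod_{k=0}^{N-1} \varphi^k(B) \cdot \varphi^N(a)$, and use the fact that the partial products $\prod_{k=0}^{N-1} \varphi^k(B)$ live in $GL_n(\tbfA^{\dagger,r_N})$ for a controlled sequence of radii $r_N$, while $\varphi^N(a)$ has a controlled growth in $\tbfA$, to force the limit $a$ into some $\tbfA^{\dagger,r}$. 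This is the Kedlaya-type Frobenius-iteration argument, and the result is essentially the perfect case of the decompleteness theorem of Kedlaya--Liu \cite{KedlayaLiu-relativeII}, which one can also cite directly.
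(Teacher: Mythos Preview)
Your proposal is correct, but it takes a different route from the paper. The paper's proof has two parts: first it applies Lemma~\ref{lem:keylemma} with $d=[\underline{p}^\flat]-p$ and $x=[\underline{p}^\flat]$ to show that for every $n$ there are $r,s>0$ with
\[
\tbfA^{\dagger,r}\subset \O_{\E,\Prism}^{\dagger,n}((\Ainf,\xi))\subset \tbfA^{\dagger,s},
\]
thereby identifying the sheaf-theoretic overconvergent ring with the classical one; second, it simply \emph{cites} the equivalence as known, invoking \cite[Theorem~8.5.3]{KedlayaLiu-relativeI} for the underlying \'etale $\varphi$-module statement and Lemma~\ref{lem:isomAinfi} together with \cite[Theorem~2.4.5]{Kedlaya-newmethod} to upgrade to $(\varphi,\Gamma)$-modules. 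Your argument, by contrast, unpacks those citations: you build the quasi-inverse functors by hand and sketch the Kedlaya-style Frobenius-iteration descent that forces $\varphi$-fixed vectors of $M\otimes\tbfA$ back into $\tbfA^\dagger$.

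The trade-off is this: your approach is self-contained and makes the mechanism visible, but it omits the one step the paper actually needs for its own purposes. The identification $\O_{\E,\Prism}^{\dagger}((\Ainf,\xi))=\tbfA^\dagger$ is what allows the paper to deduce the subsequent corollary that $D_X^{\perf}$ is an equivalence; without it, the theorem floats free of the prismatic framework. If you are proving the theorem purely as stated, your argument suffices (and indeed you note one can also cite Kedlaya--Liu directly, which is exactly what the paper does). If you want to recover the corollary as well, you should prepend the comparison of the two overconvergent rings via Lemma~\ref{lem:keylemma}.
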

\begin{proof}
In the proof of Lemma~\ref{lem:keylemma}, if we let $d=[\underline{p}^\flat]-p$ and $x=[\underline{p}^\flat]$, then we will have for each $n\in \N$, there is $r,s>0$ such that 
$$
\tbfA^{\dagger,r} \subset \O_{\E,\Prism}^{\dagger,n}((\Ainf,\xi)) \subset \tbfA^{\dagger,s}.
$$
And the rest of this theorem becomes a well-known fact. Actually, we have the corresponding categories of \'etale $\varphi$-modules are both equivalent to the category of finite free $\Z_p$-modules. For statement about \'etale $\varphi$-modules, one can refer to \cite[Theorem 8.5.3]{KedlayaLiu-relativeI}. On the other hand, the statement about \'etale $(\varphi,\Gamma)$-modules is due to Lemma~\ref{lem:isomAinfi} and \cite[Theorem 2.4.5]{Kedlaya-newmethod}.
\end{proof}

\begin{corollary}
We have \eqref{eq:mainperf} is an equivalence of categories. 
\end{corollary}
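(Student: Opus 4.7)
The plan is to deduce the equivalence from Theorem~\ref{thm:perfectthm} by performing Čech descent along the cover $(\Ainf,\xi)\to \ast$ on both sides and translating the resulting descent data into continuous $G_K$-actions, so that $D_X^\perf$ becomes base change along $\tbfA^\dagger \hookrightarrow \tbfA$.

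First, I would invoke Corollary~\ref{cor:descent} (extended to $\O_{\E,\Prism}^\dagger$ as in the remark following it) and the fact, just established, that $(\Ainf,\xi)$ covers the final object of both $\Shv(X_\Prism^\perf)$ and $\Shv(X_{\Prism^\circ}^\perf)$. This presents the two categories in question as limits of $\varphi$-module categories indexed by the Čech nerve $((\Ainf^\bullet)_\perf,\xi)$, whose terms remain transversal as recalled before Lemma~\ref{lem:isomAinfi}.

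Next, I would identify the relevant section rings. For $i=0$, the sandwich argument in the proof of Theorem~\ref{thm:perfectthm} combined with Corollary~\ref{cor:injectiveonsections} gives $\O_{\E,\Prism}^\dagger((\Ainf,\xi))=\tbfA^\dagger$ and $\O_{\E,\Prism}((\Ainf,\xi))=\tbfA$. For $i\geq 1$, Lemma~\ref{lem:isomAinfi}(2),(3) give
$$
\O_{\E,\Prism}((\Ainf^i)_\perf,\xi)\simeq C^0(G_K^i,\tbfA),\qquad \O_{\E,\Prism}^{\dagger,n}((\Ainf^i)_\perf,\xi)\simeq C^0\!\left(G_K^i,\Ainf\!\left\langle \tfrac{p}{\tilde{\xi}_n}\right\rangle\!\left[\tfrac{1}{\tilde{\xi}_n}\right]\right).
$$
Passing to the filtered colimit over $n$, the compactness of $G_K^i$ together with the injectivity of $\iota_n$ on sections (Corollary~\ref{cor:injectiveonsections}) ensure that the colimit commutes with $C^0(G_K^i,-)$, so $\O_{\E,\Prism}^\dagger((\Ainf^i)_\perf,\xi)\simeq C^0(G_K^i,\tbfA^\dagger)$.

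Unwinding the cocycle condition against $C^0(G_K^\bullet,-)$, a descent datum along this cover on a finite projective $\tbfA^\dagger$-module (resp.\ $\tbfA$-module) with compatible Frobenius becomes exactly the data of a continuous semilinear $G_K$-action commuting with $\varphi$. Hence the source and target of $D_X^\perf$ are respectively equivalent to the categories of étale $(\varphi,G_K)$-modules over $\tbfA^\dagger$ and over $\tbfA$, and $D_X^\perf$ corresponds under these identifications to base change along $\tbfA^\dagger \hookrightarrow \tbfA$. The desired equivalence is then immediate from Theorem~\ref{thm:perfectthm}. The main obstacle I anticipate is the interchange of the filtered colimit defining $\O_{\E,\Prism}^\dagger$ with both taking sections on the Čech nerve and with $C^0(G_K^i,-)$: the first is handled by Lemma~\ref{lem:colimitcommutewithsections} combined with injectivity of the transition maps, while the second reduces to the fact that a continuous function from a compact space into a filtered union with injective transitions lands in a single level. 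Once this step is established, the remaining translation between descent and Galois-action data is routine.
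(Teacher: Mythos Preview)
Your proposal is correct and follows the same approach the paper implicitly takes: the paper states the corollary without proof, treating it as immediate from Theorem~\ref{thm:perfectthm} together with Lemma~\ref{lem:isomAinfi}, and your argument spells out precisely the Čech-descent and $C^0(G_K^\bullet,-)$ identifications needed to bridge Theorem~\ref{thm:perfectthm} to the sheaf-theoretic statement about $D_X^{\perf}$. The only remark is that the paper has already embedded much of this reasoning into the proof of Theorem~\ref{thm:perfectthm} itself (the last sentence there invokes Lemma~\ref{lem:isomAinfi} to handle the $(\varphi,\Gamma)$-module statement), so from the paper's internal logic the corollary really is a one-line consequence.
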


\subsection{The deperfection}
In this section, we will show 
\begin{theorem}\label{thm:mainwithproof}
If $K$ is a finite extension of $\Q_p$, and let $X=\Spf(\O_K)$, we have \eqref{eq:main} is an equivalence of categories.
\end{theorem}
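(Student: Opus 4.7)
The plan is to construct a quasi-inverse to $D_X$. Given a Laurent $F$-crystal $(\mathcal{M}, f)$, evaluation at the Breuil--Kisin prism $(\mathfrak{S}, E)$ produces an étale $\varphi$-module $M$ over $\O_\E$ equipped with a $\varphi$-equivariant descent isomorphism over $\O_{\mathcal{E},\Prism}((\mathfrak{S}^1, E))$, and by the equivalence of Bhatt--Scholze and Wu this data corresponds to a continuous $\Z_p$-representation $T$ of $G_K$. The goal is to produce an étale $\varphi$-module $M^\dagger$ over $\O_\E^\dagger$ whose base change to $\O_\E$ recovers $M$, together with a descent isomorphism already defined over $\O_{\mathcal{E},\Prism}^{\dagger,n}((\mathfrak{S}^1, E))$ for some $n$. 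By the description given just before equation \eqref{eq:main}, this is precisely the data of an object of $\mathrm{Vect}(X_{\Prism^\circ},\O_{\mathcal{E},\Prism}^{\dagger})^{\varphi=1}$ mapping to $(\mathcal{M}, f)$ under $D_X$.

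The existence of $M^\dagger$ is already available. By the classical overconvergence of $(\varphi,\tau)$-modules proved in \cite{Gao-Liu-Loosecrystallinelifts} and \cite{Gao-Poyeton}, together with Remark~\ref{rem:comparewithclassical} identifying $\O_\E^\dagger$ with the period ring used there, $M$ descends to an étale $\varphi$-module over $\O_\E^\dagger$. The new ingredient is the overconvergent descent isomorphism on $(\mathfrak{S}^1, E)$. Following the slogan stated in the introduction, this isomorphism should come from the action of the Galois element $\tau$ topologically generating $\Gal(L/K_{1^\infty})$: inside the prismatic $(\varphi,\hat{G})$-module framework of \cite{Du-Liu-phiGhatmodules}, the $\tau$-action translates directly into a descent datum on $\mathfrak{S}^1$, in contrast with the classical $(\varphi,\hat{G})$-module picture where $\tau$ is only visible after a Frobenius twist.

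With this translation in hand, the proof reduces to showing that $\tau$ preserves overconvergent vectors and that the resulting operator lies in $\O_{\mathcal{E},\Prism}^{\dagger,n}((\mathfrak{S}^1, E))$ for some $n=n(T)$. For this I would invoke the quantitative bounds on the $\tau$-action obtained in \cite{Gao-Liu-Loosecrystallinelifts} via loose crystalline lifts, then use Lemma~\ref{lem:keylemma} and Corollary~\ref{cor:injectiveonsections} to convert those estimates into membership in the appropriate overconvergent section. Full faithfulness of $D_X$ should then follow once essential surjectivity is established: a morphism of overconvergent Laurent $F$-crystals is determined by the induced morphism of étale $\varphi$-modules over $\O_\E^\dagger$, and the structural map $\O_\E^\dagger \hookrightarrow \O_\E$ is injective, so no morphism is lost under base change. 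The main obstacle I expect is the precise identification of the descent datum on $\mathfrak{S}^1$ with the $\tau$-action inside the prismatic framework; one must describe $\mathfrak{S}^1$ concretely enough to locate a lift of $\tau$ there, which is exactly where the prismatic refinement of $(\varphi,\hat{G})$-modules from \cite{Du-Liu-phiGhatmodules} becomes indispensable, and where the absence of the classical Frobenius twist is crucial for obtaining the estimates on the untwisted Breuil--Kisin side.
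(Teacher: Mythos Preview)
Your approach to essential surjectivity is essentially the paper's: evaluate at the Breuil--Kisin prism, invoke the overconvergence of the underlying $\varphi$-module, and then use the prismatic $(\varphi,\hat G)$-module framework of \cite{Du-Liu-phiGhatmodules} together with the estimates from \cite{Gao-Liu-Loosecrystallinelifts} to see that the $\tau$-action already lives in $\O_{\mathcal E,\Prism}^{\dagger,n}((\mathfrak S^1,E))$ and furnishes the overconvergent descent datum. That part is fine.

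The gap is in your treatment of full faithfulness. What you wrote only establishes \emph{faithfulness}: the injection $\O_\E^\dagger\hookrightarrow\O_\E$ guarantees no morphism is collapsed, but it does not tell you that a morphism of Laurent $F$-crystals descends to one of overconvergent crystals. Essential surjectivity does not help here. The paper separates the question into two levels. At the level of \'etale $\varphi$-modules over $\O_\E^\dagger$ versus $\O_\E$ (i.e.\ $k=0$), fullness is a classical fact (e.g.\ \cite[Proposition 5.4.8]{KedlayaLiu-relativeII}): a $\varphi$-equivariant map defined over $\O_\E$ between modules already defined over $\O_\E^\dagger$ automatically lands in $\O_\E^\dagger$. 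The second, less automatic, step is to check that compatibility with the descent isomorphism on $(\mathfrak S^1,E)$ descends. For this the paper proves that the comparison maps $c_k\colon \O_{\mathcal E,\Prism}^\dagger((\mathfrak S^k,E))\to\O_{\mathcal E,\Prism}((\mathfrak S^k,E))$ are injective for $k=1,2$; this is not the same as the injectivity of $c_0$ you invoke, and requires the classical faithful flatness of $\mathfrak S\to\mathfrak S^k$ together with a short $p$-adic argument. Once $c_1$ is injective, the compatibility equation holds over $\O_{\mathcal E,\Prism}^\dagger((\mathfrak S^1,E))$ iff it holds after base change, which it does by hypothesis. You should insert these two ingredients rather than trying to extract fullness from essential surjectivity.
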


\begin{proof}[{Proof of fully faithfulness in Theorem~\ref{thm:mainwithproof}}]
It reduces to prove the functors 
\begin{equation}\label{eq:fullyfaithful-i}
\mathrm{Vect}(\O_{\mathcal{E},\Prism}^{\dagger}(\mathfrak{S}^i,E))^{\varphi=1} \to \mathrm{Vect}(\O_{\mathcal{E},\Prism}(\mathfrak{S}^i,E))^{\varphi=1} 
\end{equation}
induced by base change along $c_k$ defined in \S\ref{sec:BKprism} is fully faithful when $k=0$ and faithful when $k=1,2$, where $\mathrm{Vect}(R)^{\varphi=1}$ denotes the category of \'etale $\varphi$-modules over $R$. The fully faithfulness of \eqref{eq:fullyfaithful-i} when $k=0$ will follow from the fully faithfulness of the classical theory of \'etale $\varphi$-modules, for instance, \cite[Proposition 5.4.8]{KedlayaLiu-relativeII}. When $k=1,2$, it is enough to prove that $c_k$ are injective.

Let $x$ and $x^+$ be as in the discussions when defining $c_k$. To show the injectivity of $c_k$ we may assume $x=x^+$ which is equal to $\sum\limits_{i = 0}^\infty a_i (\frac{p}{E^n})^i$ for some $n\in \N$ with $\{a_i\}$ converges $p$-adically to $0$ in $\mathfrak S ^k$. Suppose that $c_k(x)=0$ in $\O_{\E, \Prism  }(\mathfrak S^k,E).$ For $m \in N$, set 
$$x_m=  \sum\limits_{i = 0}^m a_i (\frac{p}{E^n})^i.
$$ 
It is enough to show for any $l\in\N$, $x_m $ are in $p ^ l \mathfrak S^k [\frac{p}{E^n}]$ for $m$ sufficient large. Since $a_i$ converges to $0$ $p$-adically in $\mathfrak S^k$ and $x=0$ in $\O _{\E, \Prism} (\mathfrak S ^k,E) $, we can choose $m$ sufficiently large so that $x_m \in p ^{m +l}\mathfrak{S}^k[\frac{1}{E}] \subset p ^{m +l} \O _{\E, \Prism} (\mathfrak S ^k,E)$. So $y_m := E^{m n} x_m \in \mathfrak S^k \cap  p ^{m +l} \O _{\E, \Prism} (\mathfrak S ^k,E) $. Note that $\mathfrak S ^k / p \mathfrak S ^k $ injects to $\O_{\E, \Prism  }(\mathfrak S^k,E)/ p \O_{\E, \Prism  }(\mathfrak S^k,E) = \mathfrak S ^k / p \mathfrak S ^k [\frac 1 u]$ and $\mathfrak{S}^k$ is $p$-adically separated, both because $\mathfrak S ^k $ is faithfully flat over $\mathfrak S$ along any of the projection maps. Therefore, $y _m$ is in $\mathfrak S^k \cap p ^{m +l} \O_{\E , \Prism} (\mathfrak S^k,E)= p ^{m +l} \mathfrak S ^k$. Say $y_m = p ^ {m +l} z$ with $z \in \mathfrak S^k$. Then $x_m = p ^l(\frac{p}{E^n})^m \in p ^l \mathfrak S^k [\frac{p}{E^n}] $ as required. 
\end{proof}

We will use the same method introduced in \cite{Gao-Liu-Loosecrystallinelifts} to show the essential surjectivity, except we will replace the theory of $(\varphi,\hat{G})$-modules developed in \cite{liu-notelattice} with the theory of prismatic $(\varphi,\hat{G})$-modules developed in \cite{Du-Liu-phiGhatmodules}. Let $T$ be a $\Z_p$-representation of $G_K$ and let $(\mathcal{M},\varphi_{\mathcal{M}},\hat{G})$ be the $(\varphi,\tau)$-module associated with $T$ in the sense of \cite[Theorem 4.2.3]{Du-Liu-phiGhatmodules}. Let $T_n\coloneqq T/p^n T$, then we have $T_n$ correspondence to $\mathcal{M}_n=\mathcal{M}/p^n\mathcal{M}$ which is from the correspondences studied in \cite{Caruso-phitau}. 

We will summarize the result in \cite[\S6]{Gao-Liu-Loosecrystallinelifts} in the form we need. In the following, $M_d (R)$ denote the set if $d\times d$-matrices with entries in $R$. 

\begin{theorem}[{\cite{GL20}}]
There is a basis $\{e_j^{(n)}\}_{j=1}^d$ of $\mathcal{\mathcal{M}}_n$, such that $\{\lim_{n}e_j\}_{j=1}^d$ forms a basis of $M=\varprojlim \mathcal{M}_n$. For each $n$, there is a $\mathfrak{S}$-sublattice $\mathfrak{M}_{(n)}$ of $\mathcal{M}_n$, i.e, $\mathcal{M}_n=\mathfrak{M}_{(n)}[1/u]$. Moreover, $\mathfrak{M}_{(n)}$ is generated over $\{\mathfrak{e}_{(n),j}^{(i)} \mid j\leq n\}\subset \mathcal{M}_n$ as $\mathfrak{S}$-module, satisfying that
\begin{enumerate}
    \item $(e_j^{(n)})=(\mathfrak{e}^{(n)}_{(n),j}) Y_n^{-1}$ with $Y_n,Y_n^{-1} \in M_d(\mathfrak{S})$;
    \item $\mathfrak{e}^{(i)}_{(n),j}=\mathfrak{e}^{(n)}_{(n),j} Y_{i,n}$ for $Y_{i,n} \in M_d(\mathfrak{S}[p/u^{2h}])$ for some $h>0$;
    \item for each $n>0$, $\mathfrak{M}_{(n)}$ is a quotient of two finite free Kisin module $\mathfrak{M}$ (may different for each $n$) attached to a crystalline representation of $G_K$ in the sense of \cite[Definition 4.1.1]{Gao-Liu-Loosecrystallinelifts}.
\end{enumerate}
\end{theorem}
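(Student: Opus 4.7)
The plan is to realize each sublattice $\mathfrak{M}_{(n)}$ as a quotient of two finite free Kisin modules coming from loose crystalline lifts of $T_n$, and then read off all of the bases and transition matrices from this presentation. First, I would apply the loose crystalline lifting theorem of \cite{Gao-Liu-Loosecrystallinelifts} to produce, for each $n$, a morphism $\mathfrak{N}^{(n)}_1 \to \mathfrak{N}^{(n)}_2$ of finite free Kisin modules attached to crystalline $\Z_p$-representations whose Hodge--Tate weights lie in a fixed interval $[0,h]$ depending only on $T$, with cokernel recovering $\mathcal{M}_n$ after inverting $u$. Taking $\mathfrak{M}_{(n)}$ to be this cokernel gives condition (3) immediately and forces $\mathfrak{M}_{(n)}[1/u] = \mathcal{M}_n$.

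Next, I would fix an $\mathfrak{S}$-basis of $\mathfrak{N}^{(n)}_2$ and let $\mathfrak{e}^{(n)}_{(n),j}$ denote its image in $\mathfrak{M}_{(n)}$. The change-of-basis matrix expressing these in terms of the given basis $e^{(n)}_j$ of $\mathcal{M}_n$ defines $Y_n$; its invertibility over $\mathfrak{S}$ follows because $\mathfrak{M}_{(n)}$ and $\bigoplus \mathfrak{S} e^{(n)}_j$ are cofinal as $\mathfrak{S}$-lattices in $\mathcal{M}_n$, yielding (1). For (2), I would construct the intermediate generators $\mathfrak{e}^{(i)}_{(n),j}$ by running the analogous Kisin-module construction at level $i$ and comparing reductions modulo $p^i$; the bound $Y_{i,n} \in M_d(\mathfrak{S}[p/u^{2h}])$ reflects that Frobenius on a Kisin module of crystalline type with weights in $[0,h]$ has matrix whose determinant is a unit times $E^h$, and the Eisenstein congruence $E \equiv u^e \bmod p$ upgrades $E^{-h}$ denominators into $u^{-2h}$ denominators once suitable powers of $p$ are absorbed.

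The hard part is \emph{uniformity}: the weight bound $h$, and hence the exponent $2h$ appearing in (2), must be independent of $n$, and this is the substantial technical input of \cite{Gao-Liu-Loosecrystallinelifts}. Their construction proceeds by an inductive algorithm that modifies crystalline lifts level by level while preserving the weight bound, and it is precisely this uniformity that makes the statement usable in the essential-surjectivity step of Theorem~\ref{thm:mainwithproof}, since one ultimately needs a single radius of overconvergence valid for all of $T$ at once. A secondary technical point is that bases at different levels must be chosen coherently so that the transition matrices $Y_{i,n}$ really satisfy the stated bound; this bookkeeping requires tracking the explicit lifting algorithm rather than merely invoking its existence, and constitutes the genuine technical core of the argument.
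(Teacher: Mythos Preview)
The paper does not give a proof of this theorem: it is stated as a summary of results from \cite{Gao-Liu-Loosecrystallinelifts} (the citation \cite{GL20} in the theorem header), introduced by the sentence ``We will summarize the result in \cite[\S6]{Gao-Liu-Loosecrystallinelifts} in the form we need.'' There is therefore nothing in the present paper to compare your proposal against; your sketch is effectively an attempt to reconstruct the argument of that external reference, and in broad outline (loose crystalline lifts with uniformly bounded Hodge--Tate weights, Kisin modules as quotients, inductive control of transition matrices) it is indeed in the spirit of that paper.

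That said, your argument for item (1) contains a genuine gap. You write that ``its invertibility over $\mathfrak{S}$ follows because $\mathfrak{M}_{(n)}$ and $\bigoplus \mathfrak{S} e^{(n)}_j$ are cofinal as $\mathfrak{S}$-lattices in $\mathcal{M}_n$.'' Cofinality only yields $Y_n, Y_n^{-1} \in M_d(\mathfrak{S}[1/u])$, not $M_d(\mathfrak{S})$; the latter is equivalent to the two $\mathfrak{S}$-lattices being \emph{equal}, which is a much stronger assertion. The resolution is that you have the logic reversed: the basis $\{e_j^{(n)}\}$ is not ``given'' in advance but is \emph{constructed} as part of the theorem, and in the Gao--Liu argument it is produced precisely so that its $\mathfrak{S}$-span coincides with (the relevant piece of) $\mathfrak{M}_{(n)}$. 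Getting this right, together with arranging that the $e_j^{(n)}$ converge as $n\to\infty$, is exactly the ``bookkeeping'' you allude to at the end, and it is not a consequence of cofinality alone.
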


\begin{corollary}\label{cor:tauaction}
There is a constant $n=n(T,K)\in \N$, and a basis $\{e_j\}$ of $\mathcal{M}$ such that $\varphi_M$ (resp. $\tau$) acts on $\{e_j\}$ by a matrix inside $M_d(\O_{\mathcal{E},\Prism}^{\dagger,n}((\mathfrak{S},E)))$ (resp $M_d(\O_{\mathcal{E},\Prism}^{\dagger,n}((\mathfrak{S}^1,E)))$).
\end{corollary}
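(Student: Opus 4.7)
The plan is to take the basis $\{e_j\}$ with $e_j=\lim_n e_j^{(n)}$ furnished by the preceding theorem, and verify that in this basis both $\varphi_{\mathcal{M}}$ and $\tau$ act by matrices with entries in the appropriate overconvergent rings for a single constant $n=n(T,K)$. This $n$ will be controlled by the uniform Hodge--Tate bound $h$ from part~(2) of the theorem together with the absolute ramification index $e$ of $K$; roughly speaking $n$ should be a linear function of $h$ (the pole order the Kisin modules allow at $E$) and $e$ (which governs the relation $\varphi^n(E)=u^{ep^n}+pu_n$ appearing in the proof of Lemma~\ref{lem:keylemma}).

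For the Frobenius side, I would first compute the matrix of $\varphi$ in the Kisin-module basis $\{\mathfrak{e}^{(n)}_{(n),j}\}$ of $\mathfrak{M}_{(n)}$. Part~(3) of the theorem realizes $\mathfrak{M}_{(n)}$ as a sub-quotient of finite-free Kisin modules attached to crystalline representations whose Hodge--Tate weights are bounded independently of $n$; this forces the Frobenius matrix in $\{\mathfrak{e}^{(n)}_{(n),j}\}$ to lie in $M_d(\mathfrak{S}[1/E])$ with pole order at $E$ uniformly bounded in $n$ by $h$. Conjugating by $Y_n,Y_n^{-1}\in M_d(\mathfrak{S})$ preserves the pole bound, and passing to the limit as $n\to\infty$ while rewriting the entries in the form $\sum_i a_i(p/E^N)^i$ as in the proof of Lemma~\ref{lem:keylemma} produces a matrix in $M_d(\O_{\mathcal{E},\Prism}^{\dagger,n}((\mathfrak{S},E)))$ for a uniform $n$.

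For $\tau$, the crucial input is the prismatic $(\varphi,\hat{G})$-module theory of \cite{Du-Liu-phiGhatmodules}, in which $\tau$ acts on $\mathcal{M}\otimes_{\mathfrak{S}}\mathfrak{S}^1$ directly, without the Frobenius twist that appears in the classical $(\varphi,\hat{G})$-module theory of \cite{liu-notelattice}. I would import the loose-crystalline-lift estimates on $(\tau-1)\mathfrak{e}^{(n)}_{(n),j}$ from \cite[\S6]{Gao-Liu-Loosecrystallinelifts} into this prismatic framework and combine them with the control $Y_{i,n}\in M_d(\mathfrak{S}[p/u^{2h}])$ from part~(2) of the theorem. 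After performing the same limit and rewriting, the matrix of $\tau$ on $\{e_j\}$ should land in $M_d(\O_{\mathcal{E},\Prism}^{\dagger,n}((\mathfrak{S}^1,E)))$ for the \emph{same} $n$ as for $\varphi$, because the bound $h$ appearing in $Y_{i,n}$ is exactly the same Hodge--Tate invariant that governs the Frobenius estimate.

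The main obstacle will be the deperfection step for $\tau$. The Cauchy-like series obtained from the Gao--Liu estimates naturally live in the perfection of $\mathfrak{S}^1$, and one must verify that the specific series arising from the prismatic $\tau$-action actually converge in $\O_{\mathcal{E},\Prism}^{\dagger,n}((\mathfrak{S}^1,E))$ itself rather than merely in its perfection. This is precisely where the switch from classical $(\varphi,\hat{G})$-modules to their prismatic upgrade pays off: removing the Frobenius twist means the $\tau$-action is already defined over $(\mathfrak{S}^1,E)$, so the required convergence may be checked after the faithfully flat extension to $(\mathfrak{S}^1)_{\perf}$ via Corollary~\ref{cor:doublecoveringcoproduct} and the injectivity in Corollary~\ref{cor:injectiveonsections}. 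A final bookkeeping of how $h$ and $e$ combine to fix $n$, using the inclusions of Lemma~\ref{lem:keylemma} applied over $(\mathfrak{S}^1,E)$, completes the argument.
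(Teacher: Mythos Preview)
Your overall strategy matches the paper's: take the limit basis $\{e_j\}$ from the preceding theorem, use the crystalline Kisin lifts of part~(3) to control $\varphi$, and invoke the prismatic $(\varphi,\hat G)$-module theory of \cite{Du-Liu-phiGhatmodules} (rather than the classical one) to handle $\tau$. The key conceptual point---that removing the Frobenius twist is what makes the $\tau$-estimate land in the imperfect ring---is exactly what the paper stresses.

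Where you diverge is in the ``main obstacle'' you flag. You anticipate having to deperfect a series that a priori lives in the perfection of $\mathfrak{S}^1$, and propose to do this by faithfully flat descent to $(\mathfrak{S}^1)_{\perf}$. The paper does not face this obstacle at all: it simply cites \cite[Corollary~3.3.4]{Du-Liu-phiGhatmodules}, which already gives that $\tau$ acts on the Kisin basis $\{\mathfrak{e}^{(n)}_{(n),j}\}$ by a matrix in $M_d(\mathfrak{S}^1)$---integrally, with no perfection and no denominators. Once that is in hand, the Gao--Liu \S6 argument runs verbatim: conjugating by $Y_n, Y_n^{-1}\in M_d(\mathfrak{S})$ and the transition matrices $Y_{i,n}\in M_d(\mathfrak{S}[p/u^{2h}])$ from part~(2) produces the overconvergent bound directly over $\mathfrak{S}^1$, and Lemma~\ref{lem:keylemma} converts the $u^{2h}$-denominator into a uniform $n$.

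Your proposed workaround is also shaky on its own terms: membership in $\O_{\mathcal{E},\Prism}^{\dagger,n}((\mathfrak{S}^1,E))$ is not obviously a property that can be checked after a faithfully flat extension to the perfection, and neither Corollary~\ref{cor:doublecoveringcoproduct} nor Corollary~\ref{cor:injectiveonsections} supplies such a descent statement. So the cleanest fix is to replace your deperfection paragraph with the direct citation of \cite[Corollary~3.3.4]{Du-Liu-phiGhatmodules}, after which the rest of your outline goes through.
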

\begin{proof}
This is from \cite[\S6]{Gao-Liu-Loosecrystallinelifts} and replace the theory of $(\varphi,\hat{G})$-modules with the theory of prismatic $(\varphi,\hat{G})$-modules attached with crystalline representations. By \cite[Corollary 3.3.4]{Du-Liu-phiGhatmodules}, we have $\tau$ acts on $\mathfrak{e}^{(n)}_{(n),j}$ by a matrix in $M_d(\mathfrak{S}^1)$. Here the main difference between classical $(\varphi,\hat{G})$-modules with prismatic $(\varphi,\hat{G})$-modules is that they are differed by a Frobenius twist for the definition of $\hat{G}$-action, and this enables us to have a better estimation of the $\tau$-action.
\end{proof}

\begin{proof}[{Proof of Theorem~\ref{thm:mainwithproof}}]
It remains to prove the essential surjectivity of $D_X$. For any Laurent $F$-crystals, we attach it a prismatic \'etale $(\varphi, \tau)$-module as in \cite[\S4.2]{Du-Liu-phiGhatmodules}. Let's briefly recall our setup, we fix a topological generator $\tau$ of $\Gal(L/K_{1^\infty})$, we can define a map $\mathfrak{S}^1 \to \Ainf$ induced by the two maps $\mathfrak{S} \hookrightarrow \Ainf$ and $\mathfrak{S} \hookrightarrow \Ainf \xrightarrow[]{\tau} \Ainf$. This map is injective by Corollary 2.4.5 of $loc.$ $cit.$, and induces natural maps $\mathfrak{S}^1[1/E]^\wedge_p \to W(C^\flat)$ and $\O_{\mathcal{E},\Prism}^{\dagger}((\mathfrak{S}^1,E)) \to \O_{\mathcal{E},\Prism}^{\dagger}((\Ainf,\xi))$. We have a Laurent $F$-crystal is equivalent to an \'etale $\varphi$-module $\mathcal{M}$ over $\O_{\E}$ together with a descent isomorphism over $\mathfrak{S}^1[1/E]^\wedge_p$. Then in \S4.2 of $loc.$ $cit.$ we show that if we base change the descent isomorphism along $\mathfrak{S}^1[1/E]^\wedge_p \to W(C^\flat)$, then it is exactly the $\tau$-action in the definition of $(\varphi, \tau)$-modules. By Corollary~\ref{cor:tauaction}, we have the matrix defines the $\tau$-action actually has coefficients inside $\O_{\mathcal{E},\Prism}^{\dagger,n}((\mathfrak{S}^1,E))$ for some $n$. The linearization of this $\tau$-action defines a descent isomorphism of an overconvergent Laurent $F$-crystal.
\end{proof}

\section{Overconvergence of $(\varphi,\Gamma)$-modules associated with $\Phi$-iterate tower.}\label{sec:phiiterate}
Let $K$ be as in \S\ref{sec:BKprism}, and we fix a polynomial $\Phi$ in $W[X]$ such that $\Phi(X)\equiv X^p \mod p$ and $\Phi(0)=0$. We fix a uniformizer $\varpi \in \O_K$ and also fix a system $\vec{\nu}=\{\nu_i\}_{i\geq 0}$ in $\overline{K}$ with $\nu_0=\varpi$ and $\Phi(\nu_{n+1})=\nu_n$. Let $K_n=K(\nu_n)$, then define $K_\infty=K_{\infty,\vec{\nu}}=\cup_n K_n$ and $G_\infty=\Gal(\overline{K}/K_\infty)$. We define $\mathfrak{S}_{\Phi}=W[\![u]\!]$ and define the $\delta$-structure of $\mathfrak{S}_{\Phi}$ determined by $\varphi(u)=\Phi(u)$. Let $E(u)$ be the minimal polynomial of $\varpi$ over $K_0$, we have $(\mathfrak{S}_{\Phi},E)$ is a transversal prism over $\O_K$. We define $\iota\colon\mathfrak{S}_{\Phi} \to \Ainf$ by $u \mapsto \{[\underline{\nu}_n^\flat]\}_{\Phi}$, where $\{\cdot\}_{\Phi}$ is the unique set-theoretic section $\O_C^\flat \to  \Ainf$ to the reduction modulo $p$ and satisfying $\varphi(\{x\}_{\Phi}) = \Phi(\{x\}_{\Phi} )$ for all $x\in \O_C^\flat$, cf., \cite[Lemme 9.3]{Colmez-fini} and also \cite[Lemma 2.1.1]{Cais-Liu-F-crystals}. We will let $\O_{\E,\Phi}=\mathfrak{S}_{\Phi}[1/E]^\wedge_p$. Then $\iota$ defines a map of prisms $(\mathfrak{S}_{\Phi},E) \to (\Ainf,\xi)$ and also $\O_{\E,\Phi} \to W(C^\flat)$. One has $\mathfrak{S}_{\Phi} \subset \Ainf^{G_\infty}$.

\begin{definition}
For $\ast\in\{\emptyset, \dagger\}$, we define the category $\mathrm{Mod}^{\text{\'et},\ast}_{\Phi}(\varphi,\Gamma)$ of \'etale $(\varphi,\Gamma)$-modules associated with the $\Phi$-iterate tower $\vec{\nu}$ consists of triples $(\mathcal{M}^\ast,\varphi_{\mathcal{M}^\ast},\widehat{\mathcal{M}}^\ast)$ where $(\mathcal{M}^\ast,\varphi_{\mathcal{M}^\ast})$ is an \'etale $\varphi$-module over $\O_{\E,\Prism}^\ast((\mathfrak{S}_{\Phi},E))$, and $\widehat{\mathcal{M}}^\ast\coloneqq\mathcal{M}^\ast\otimes_{\iota}\O_{\E,\Prism}^\ast((\Ainf,\xi))$ is equipped with a semilinear $G_K$-action such that regarding $\mathcal{M}^\ast$ as an $\O_{\E,\Prism}^\ast((\mathfrak{S}_{\Phi},E))$-submodule in $\widehat{\mathcal{M}}^\ast$, we have $\mathcal{M}^\ast \subset (\widehat{\mathcal{M}}^\ast)^{G_\infty}$.
\end{definition}

Given $(\mathcal{M},\varphi_{\mathcal{M}},\widehat{\mathcal{M}}) \in \mathrm{Mod}^{\text{\'et}}_{\Phi}(\varphi,\Gamma)$ associated with the $\Phi$-iterate tower $\vec{\nu}$ as above, we define $T= \widehat{\mathcal{M}}^{\varphi=1}$. Given $T\in \RepZp(G_K)$, a $\Z_p$-Galois representation of $G_K$, and for $\ast\in\{\emptyset, \dagger\}$, we define $(\mathcal{M}^\ast(T|_{G_\infty}),\varphi_{\mathcal{M}^\ast(T|_{G_\infty})}, \widehat{\mathcal{M}}^\ast(T))$ with $\widehat{\mathcal{M}}^\ast=T\otimes_{\Z_p}\O_{\E,\Prism}^\ast((\Ainf,\xi))$ and $(\mathcal{M}^\ast(T|_{G_\infty}),\varphi_{\mathcal{M}^\ast(T|_{G_\infty})})$ the \'etale $\varphi$-module over  associated with $T|_{G_\infty}$ as in \cite[\S2.5]{Gao-Liu-Loosecrystallinelifts}. One can show $T$ induces an equivalence of $\mathrm{Mod}^{\text{\'et}}_{\Phi}(\varphi,\Gamma)$ and $\RepZp(G_K)$ using \cite[Theorem 3.2.2]{Cais-Liu-F-crystals}. 

\begin{theorem}\label{thm:phiiterat}
Bese change induces an equivalence of categories between $\mathrm{Mod}^{\text{\'et},\dagger}_{\Phi}(\varphi,\Gamma)$ and $\mathrm{Mod}^{\text{\'et}}_{\Phi}(\varphi,\Gamma)$.
\end{theorem}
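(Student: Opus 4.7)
The plan is to realize both sides as categories of (overconvergent) Laurent $F$-crystals on $X_{\Prism^{(\circ)}}$ and then invoke Theorem~\ref{thm:mainwithproof}. First, observe that $(\mathfrak{S}_\Phi,E)$ is a transversal prism over $\O_K$ which, by the same argument as in Corollary~\ref{cor:doublecoveringcoproduct} (the proof only uses that $W[\![u]\!]/E$ is finite flat over $\O_K$, not the particular choice of $\varphi$), covers the final object in both $X_\Prism$ and $X_{\Prism^\circ}$. Consequently, by \v{C}ech descent (Corollary~\ref{cor:descent}, together with its extension to $\O_{\E,\Prism}^\dagger$ noted in the remark following Corollary~\ref{cor:injectiveonsections}), the categories $\mathrm{Vect}(X_\Prism,\O_{\E,\Prism})^{\varphi=1}$ and $\mathrm{Vect}(X_{\Prism^\circ},\O_{\E,\Prism}^\dagger)^{\varphi=1}$ are equivalent to the categories of \'etale $\varphi$-modules $\mathcal{M}^\ast$ over $\O_{\E,\Prism}^\ast((\mathfrak{S}_\Phi,E))$ equipped with a descent isomorphism over $\O_{\E,\Prism}^\ast((\mathfrak{S}_\Phi^1,E))$ satisfying the cocycle condition over $(\mathfrak{S}_\Phi^2,E)$.

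Next I would identify these descent data with the $G_K$-action on $\widehat{\mathcal{M}}^\ast$. The two projections $\mathfrak{S}_\Phi \rightrightarrows \mathfrak{S}_\Phi^1$ postcomposed with $\iota$, respectively with $\iota$ followed by $\tau_\Phi$ (a suitably chosen element of $G_K$ playing the role of $\tau$ in the Kummer case), induce an injection $\mathfrak{S}_\Phi^1 \hookrightarrow \Ainf$ of prisms, and hence an injection of period rings $\O_{\E,\Prism}^\ast((\mathfrak{S}_\Phi^1,E)) \hookrightarrow \O_{\E,\Prism}^\ast((\Ainf,\xi))$, whose compatibility with cocycle conditions is as in \S4.2 of \cite{Du-Liu-phiGhatmodules}. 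Under this embedding, a descent isomorphism over $(\mathfrak{S}_\Phi^1,E)$ becomes exactly the $\tau_\Phi$-action on $\widehat{\mathcal{M}}^\ast$, and the condition $\mathcal{M}^\ast \subset (\widehat{\mathcal{M}}^\ast)^{G_\infty}$ together with the Cais--Liu equivalence \cite[Theorem 3.2.2]{Cais-Liu-F-crystals} ensures that the $\tau_\Phi$-action extends uniquely to a full semilinear $G_K$-action. This gives equivalences $\mathrm{Mod}^{\text{\'et}}_\Phi(\varphi,\Gamma) \simeq \mathrm{Vect}(X_\Prism,\O_{\E,\Prism})^{\varphi=1}$ and, by the same construction with $\O_{\E,\Prism}^\dagger$ in place of $\O_{\E,\Prism}$, also $\mathrm{Mod}^{\text{\'et},\dagger}_\Phi(\varphi,\Gamma) \simeq \mathrm{Vect}(X_{\Prism^\circ},\O_{\E,\Prism}^\dagger)^{\varphi=1}$, both compatibly with base change. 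Theorem~\ref{thm:mainwithproof} then finishes the proof.

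Once the dictionary above is set up, fully faithfulness of the base change functor is automatic from Corollary~\ref{cor:injectiveonsections} (whose proof works verbatim for $(\mathfrak{S}_\Phi,E)$), combined with the argument for fully faithfulness of \eqref{eq:fullyfaithful-i} given in the proof of Theorem~\ref{thm:mainwithproof}: a morphism of overconvergent objects is determined by its base change, and conversely a morphism between the images descends because $\O_{\E,\Prism}^\dagger((\mathfrak{S}_\Phi^i,E)) \hookrightarrow \O_{\E,\Prism}((\mathfrak{S}_\Phi^i,E))$ remain injective. For essential surjectivity, given $(\mathcal{M},\varphi_\mathcal{M},\widehat{\mathcal{M}})\in \mathrm{Mod}^{\text{\'et}}_\Phi(\varphi,\Gamma)$, transport it to a Laurent $F$-crystal via the first two paragraphs, apply Theorem~\ref{thm:mainwithproof} to produce an overconvergent lift, and evaluate the lift at $(\mathfrak{S}_\Phi,E)$ and $(\Ainf,\xi)$ to obtain the desired overconvergent $(\varphi,\Gamma)$-module.

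The main obstacle is the second paragraph: namely, verifying that the $\tau_\Phi$-action on $\widehat{\mathcal{M}}^\dagger$ genuinely arises from (and descends to) a prismatic descent isomorphism with coefficients in $\O_{\E,\Prism}^\dagger((\mathfrak{S}_\Phi^1,E))$, rather than merely in a larger perfect period ring. In the Kummer case this is the content of Corollary~\ref{cor:tauaction}, which is proved by the loose crystalline lifting technique of \cite{Gao-Liu-Loosecrystallinelifts} combined with the prismatic $(\varphi,\hat G)$-module theory of \cite{Du-Liu-phiGhatmodules}; however, once the identification of the two descent frameworks has been carried out, Theorem~\ref{thm:mainwithproof} itself already supplies the overconvergent descent datum, so no new estimate on $\tau_\Phi$ is required for the $\Phi$-iterate case.
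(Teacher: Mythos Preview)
Your essential surjectivity argument in the third paragraph---pass to a Laurent $F$-crystal, apply Theorem~\ref{thm:mainwithproof}, and evaluate the resulting overconvergent crystal at $(\mathfrak{S}_\Phi,E)\to(\Ainf,\xi)$---is exactly the paper's proof. The paper does only this: it uses the Cais--Liu equivalence $\mathrm{Mod}^{\text{\'et}}_\Phi(\varphi,\Gamma)\simeq \RepZp(G_K)$ (stated just before the theorem) together with Bhatt--Scholze to reach Laurent $F$-crystals, then invokes Theorem~\ref{thm:mainwithproof} and evaluates along $\iota$, noting that $\iota(\mathfrak{S}_\Phi)\subset \Ainf^{G_\infty}$ forces $\mathcal{M}^\dagger\subset(\widehat{\mathcal{M}}^\dagger)^{G_\infty}$.

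Where your plan diverges is the second paragraph, and this is where the gap lies. You attempt a direct identification $\mathrm{Mod}^{\text{\'et},\ast}_\Phi(\varphi,\Gamma)\simeq \mathrm{Vect}(X_{\Prism^{(\circ)}},\O_{\E,\Prism}^{(\dagger)})^{\varphi=1}$ by choosing an element $\tau_\Phi\in G_K$ and claiming that $(\iota,\tau_\Phi\circ\iota)$ induces an injection $\mathfrak{S}_\Phi^1\hookrightarrow \Ainf$ through which the descent datum becomes the $\tau_\Phi$-action. In the Kummer case this is precisely \cite[\S4.2, Cor.~2.4.5]{Du-Liu-phiGhatmodules}, where $\tau$ is a topological generator of $\Gal(L/K_{1^\infty})$ and the structure of $\hat G$ is explicit. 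For a general $\Phi$-iterate tower there is no reason to expect a single element $\tau_\Phi$ that both topologically generates a complement to $G_\infty$ in the relevant quotient and makes $\mathfrak{S}_\Phi^1\to\Ainf$ injective; neither claim is in the literature you cite, and the Galois closure of $K_{\infty,\vec\nu}/K$ can be considerably more complicated than in the Kummer case. The paper sidesteps this completely by never asserting such an equivalence: it goes through $\RepZp(G_K)$ instead, so the passage to crystals needs no $\tau_\Phi$ at all. Your own final paragraph already concedes that Theorem~\ref{thm:mainwithproof} supplies the overconvergent descent datum without any new $\tau_\Phi$ estimate, so the second paragraph is both the unjustified step and the unnecessary one; drop it and route through $\RepZp(G_K)$ as the paper does.
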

\begin{proof}
We will show all objects in $\mathrm{Mod}^{\text{\'et}}_{\Phi}(\varphi,\Gamma)$ are overconvergent in the sense that if $(\mathcal{M},\varphi_{\mathcal{M}},\widehat{\mathcal{M}})$ is associated with $T\in \RepZp(G_K)$, then $(\mathcal{M},\varphi_{\mathcal{M}},\widehat{\mathcal{M}})$ comes from the base change from $(\mathcal{M}^\dagger(T|_{G_\infty}),\varphi_{\mathcal{M}^\dagger(T|_{G_\infty})}, \widehat{\mathcal{M}}^\dagger(T))$. This follows from Theorem~\ref{thm:mainwithproof} by evaluation an overconvergent Laurent $F$-crystal to the diagram $(\mathfrak{S}_{\Phi},E) \to (\Ainf,\xi)$ and using $\mathcal{M}^\ast \subset (\widehat{\mathcal{M}}^\ast)^{G_\infty}$.
\end{proof}

\begin{corollary}
There is a uniform bound for the overconvergence radii of overconvergent \'etale $(\varphi,\Gamma)$-modules associated with the same Galois representation for different $\Phi$-iterate towers.
\end{corollary}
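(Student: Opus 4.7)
The plan is to read off the uniform bound directly from the overconvergent Laurent $F$-crystal $\mathcal{M}^\dagger$ attached to $T$, since $\mathcal{M}^\dagger$ lives on the whole site $X_{\Prism^\circ}$ and therefore does not see the choice of tower. First, by Theorem~\ref{thm:mainwithproof} the representation $T$ corresponds to an object $\mathcal{M}^\dagger \in \mathrm{Vect}(X_{\Prism^\circ}, \O_{\mathcal{E},\Prism}^\dagger)^{\varphi = 1}$, and by Corollary~\ref{cor:tauaction} there is an integer $n = n(T, K)$, depending only on $T$ and $K$, such that the evaluation of $\mathcal{M}^\dagger$ at the Breuil--Kisin prism $(\mathfrak{S}, E)$ together with its descent datum on $(\mathfrak{S}^1, E)$ are already defined at level $n$, i.e.\ over $\O_{\mathcal{E},\Prism}^{\dagger,n}$. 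By the remark following Corollary~\ref{cor:injectiveonsections}, the crystal property combined with the faithfully flat descent of Corollary~\ref{cor:descent} propagates this level $n$ to every transversal prism of $X_{\Prism^\circ}$: the ``convergent radius'' is a universal invariant of the crystal, not of the particular prism used to compute it.

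Next, for any $\Phi$-iterate tower $\vec{\nu}$, the associated prism $(\mathfrak{S}_\Phi, E)$ lies in $X_{\Prism^\circ}$, and the proof of Theorem~\ref{thm:phiiterat} identifies the overconvergent $(\varphi,\Gamma)$-module in $\mathrm{Mod}^{\text{\'et},\dagger}_\Phi(\varphi, \Gamma)$ attached to $T$ via $\vec{\nu}$ with the evaluation of $\mathcal{M}^\dagger$ at $(\mathfrak{S}_\Phi, E)$ together with its descent isomorphism on $(\mathfrak{S}_\Phi^1, E)$. By the previous paragraph, both pieces of data are defined at the same level $n = n(T, K)$, independently of the tower $\vec{\nu}$.

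Finally, to translate ``level $n$'' into a bound on the classical overconvergence radius I would apply Lemma~\ref{lem:keylemma} to the prism $(\mathfrak{S}_\Phi, E)$: one has $\varphi^i(E) \equiv u^{p^i \deg E} \pmod{p}$ in every $\mathfrak{S}_\Phi$, simply because $E \in W[X]$ is fixed once and for all by $K$ and the chosen uniformizer $\varpi$ and does not depend on $\vec{\nu}$. The lemma therefore sandwiches $\O_{\mathcal{E},\Prism}^{\dagger,n}((\mathfrak{S}_\Phi, E))$ between two rings of the form $\mathfrak{S}_\Phi\langle p/u^{N_0}\rangle[1/u]$ and $\mathfrak{S}_\Phi\langle p/u^{N_1}\rangle[1/u]$ with exponents $N_0, N_1$ depending only on $n$ and $\deg E$. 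These exponents translate directly into the standard notion of overconvergence radius on the classical period ring, and they are manifestly independent of $\vec{\nu}$; this yields the claimed uniform bound.

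The main obstacle, which is what Corollary~\ref{cor:tauaction} settles and what the prismatic framework makes transparent, is to show that the level $n$ can be bounded in terms of $T$ and $K$ alone. In the classical $(\varphi,\hat{G})$-module setup one only sees the $\tau$-action after a Frobenius twist, which would cost an unpredictable amount of the radius and make the bound appear tower-dependent; replacing $(\varphi,\hat{G})$-modules with prismatic $(\varphi,\hat{G})$-modules from \cite{Du-Liu-phiGhatmodules} removes this twist and is the one genuinely new ingredient needed to make the uniformity visible.
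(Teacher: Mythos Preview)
Your proposal is correct and follows the paper's approach: the overconvergent Laurent $F$-crystal $\mathcal{M}^\dagger$ is defined over $\O_{\mathcal{E},\Prism}^{\dagger,n}$ for a single $n$ by finite generation (the paper states this in one line; you unpack it via Corollary~\ref{cor:tauaction} and the remark following Corollary~\ref{cor:injectiveonsections}), and evaluating at any $(\mathfrak{S}_\Phi,E)$ inherits that same $n$, which Lemma~\ref{lem:keylemma} then converts into a tower-independent classical radius. One minor slip: in the proof of Theorem~\ref{thm:phiiterat} the $G_K$-action defining the $(\varphi,\Gamma)$-module lives over $(\Ainf,\xi)$ rather than over the self-product $(\mathfrak{S}_\Phi^1,E)$, but this is harmless since $(\Ainf,\xi)\in X_{\Prism^\circ}$ and the level-$n$ bound applies there just as well.
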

\begin{proof}
This comes from the proof of Theorem~\ref{thm:phiiterat} and the fact that for an overconvergent Laurent $F$-crystal, since it is finitely generated, it is defined over $\O_{\mathcal{E},\Prism}^{\dagger,n}$ for some $n$.
\end{proof}

\appendix

\section{Arithmetic Kummer tower is not decompleting}\label{app}
We will show arithmetic Kummer tower over $\Q_p$ is not decompleting in the sense of \cite[Definition 5.6.1]{KedlayaLiu-relativeII} by showing if it is decompleting, then all $p$-adic representations of $G_{\Q_p}$ are potentially unramified. In other words \cite[Conjecture 6.2.6]{KedlayaLiu-relativeII} does not hold.

Let $K=\Q_p$ and $K_n=K(\varpi_n)$, where $\varpi_0=p$ and $\{\varpi_n\}$ satisfies $\varpi_n=\varpi_{n+1}^p$. We define $K_n'$ be the normalizations of $K_n$ in $\overline{K}$. Let $L=\cup K_{n}$ and $L'=\cup K_{n}'$. Then $\{(K_n, \O_{K_{n}})\}$ and $\{(K_n', \O_{K_{n}'})\}$ form perfectoid finite \'etale towers over $K$ in the sense of \cite[\S5.1]{KedlayaLiu-relativeII}, so we can construct the period rings of type $\mathbf{A}$ for this two towers (where we let $E=\Z_p$ as in your notions), $\widetilde{\mathbf{A}}^*$ for the $\{K_n\}$ tower and $\widetilde{\mathbf{A}}'^*$ for the $\{K'_n\}$ tower, and also define their overconvergent version $\mathbf{A}^*$ and $\mathbf{A}'^*$, with $*\in\{r>0, \dagger\}$ as been defined in \S 5.2.1. of $loc.$ $cit$. 

Now let's assume that Kummer towers are decompleting, i.e., \cite[Conjecture 6.2.6]{KedlayaLiu-relativeII} holds, then we should have Theorem 5.7.3 of $loc.$ $cit.$ holds so that the category of ($\varphi, \Gamma$)-modules over $\mathbf{A}^\dagger$ and the category of ($\varphi, \Gamma$)-modules over $\widetilde{\mathbf{A}}^\dagger$ are equivalent via the base change functor. If we use the interpretations in Example 5.5.7 of $loc.$ $cit$., ($\varphi, \Gamma$)-modules over $\mathbf{A}^\dagger$ is the same as a finite projective $\varphi$-module $M$ over $\mathbf{A}^\dagger$ together with a semilinear $\Gal(L'/K)$ action on $M\otimes_{\mathbf{A}^\dagger}\mathbf{A}'^\dagger$ commutes with $\varphi$ and whose restriction to $\Gal(L'/L)$ fixes $M$. Let's assume $M$ is actually defined over $\mathbf{A}^r$, and we choose $n\gg 0$ such that $\theta\circ \varphi^{-n}$ is defined, and let $T$ be the $\Z_p$-Galois representation corresponds to $M$, then we will have $T\otimes_{\Z_p}\C_p=M\otimes_{\theta\circ \varphi^{-n}} \C_p$. Moreover, we have by the definition of $\mathbf{A}^r$, the action of $\Gal_K$ on $M\otimes_{\theta\circ \varphi^{-n}} \C_p$ actually factor through the an action of $\Gal(L'/K)$ on $M\otimes_{\theta\circ \varphi^{-n}} K'_n$, and there is a basis of $M\otimes_{\theta\circ \varphi^{-n}} K'_n$ fixed by $\Gal(L'/L)$. 

Now we claim that under the notions as above, we can show that there is an open subgroup of $\Gal(L'/K)$ fixes a basis of $M\otimes_{\theta\circ \varphi^{-n}} K'_n$, and this will imply that $T$ is $\C_p$-admissible, thus potentially unramified by a theorem of Sen \cite[\S3.2]{Sen-Cont}. To show this claim, we have $\Gal(L'/K'_n)$ acts linearly on $M\otimes_{\theta\circ \varphi^{-n}} K'_n$ and $H=\Gal(L'/K'_n)\cap\Gal(L'/L)$ in the kernel of this action, it remains to show that the normalization of $H$ in $\Gal(L'/K'_n)$ is $\Gal(L'/K'_n)$. Then this is actually from Kummer theory since $K'_n$ contains both $p^n$-th roots of unit and a $p^n$-th root of $p$. The above can also be proven by a direct computation by identifying $\Gal(L'/K'_n)$ with $\Z_p\rtimes \Z_p$ when $n\gg 0$.

\bibliographystyle{amsalpha}
\bibliography{mybib}

\end{document}